\theoremstyle{definition}
\newtheorem{theorem}{Theorem}[section]
\newtheorem{definition}[theorem]{Definition}
\newtheorem{proposition}[theorem]{Proposition}
\newtheorem{lemma}[theorem]{Lemma}
\newtheorem{claim}[theorem]{Claim}
\newtheorem{remark}[theorem]{Remark}
\newtheorem{convention}[theorem]{Convention}
\newtheorem{corollary}[theorem]{Corollary}
\numberwithin{equation}{section}
\newcommand{\K}{\mathcal{K}}
\renewcommand{\L}{\mathcal{L}}
\newcommand{\eps}{\varepsilon}
\newcommand{\R}{\mathbb{R}}
\newcommand{\M}{\mathcal{M}}
\newcommand{\mcfK}{\mathcal{K}}
\newcommand{\mcfM}{\mathcal{M}}
\newcommand{\dilD}{\mathcal{D}}
\DeclareMathOperator*{\dist}{dist}
\DeclareMathOperator*{\graph}{graph}
\title{Free boundary flow with surgery}
\author{Robert Haslhofer}
\begin{document}
\begin{abstract} In this paper, we prove the existence of mean curvature flow with surgery for mean-convex surfaces with free boundary. To do so, we implement our recent new approach for constructing flows with surgery without a prior estimates in the free boundary setting. The flow either becomes extinct in finite time or for $t\to\infty$ converges smoothly in the one or two sheeted sense to a finite collection of stable connected minimal surfaces with empty or free boundary (in particular, there are no surgeries for $t$ sufficiently large). Our free boundary flow with surgery will be applied in forthcoming work with Ketover, where we will address the existence problem for $3$ free boundary minimal disks in convex balls. 
\end{abstract}
\date{\today}
\maketitle

\tableofcontents

\section{Introduction}

Geometric flows with surgery provide a controlled way of flowing through singularities by cutting along a suitable collection of necks and gluing in standard caps.
Mean curvature flow with surgery was first constructed in the setting of two-convex hypersurfaces in $\mathbb{R}^N$ for $N\geq 4$ in pioneering work by Huisken-Sinestrari \cite{huisken-sinestrari3}, which in particular yielded a topological classification of two-convex hypersurfaces. The case $N=3$ has then been solved independently by Brendle-Huisken \cite{BrendleHuisken} and Kleiner and the author \cite{HK}.
The construction has been later generalized to other ambient manifolds \cite{BrendleHuisken_ambient,HKetover}, low entropy flows \cite{MramorWang,DanielsHolgate}, and higher codimensions \cite{Nguyen_surgery,LangfordNguyen}. Applications include the work on moduli-spaces in \cite{BHH1,BHH2}, the construction of foliations in \cite{HKetover,LiokumovichMaximo}, and the proof of the low entropy Sch\"onflies conjecture in \cite{DanielsHolgate}.\\

However, a problem that until recently seemed inaccessible is the construction of a flow with surgery in the setting of mean-convex free boundary surfaces. The reason for this is that both the approach by Brendle-Huisken \cite{BrendleHuisken} and the approach by Kleiner and the author \cite{HK} crucially rely on the noncollapsing result of Andrews \cite{andrews1}, which is only available in the setting without boundary. In this paper, we solve this problem for mean-convex surfaces with free boundary in any smooth convex domain $D$. More precisely, throughout this paper, $D$ denotes a smooth connected compact domain in a Riemannian three-manifold $(N,g)$, such that $\partial D$ is nonempty and has positive second fundamental form. Our main result gives existence, as well as long-time behaviour, of a flow with surgery with free boundary starting at any mean-convex free boundary initial surface $M_0=\partial K_0\subset D$:

\begin{theorem}
Given any smooth compact strictly mean-convex free boundary domain $K_0\subset D$, there exists a free boundary flow with surgery starting at $K_0$. Moreover, the flow either becomes extinct in finite time or for $t\to\infty$ converges smoothly in the one or two-sheeted sense to a finite collection of stable connected minimal surfaces with empty or free boundary (in particular, there are no surgeries for $t$ sufficiently large).
\end{theorem}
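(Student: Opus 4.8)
The strategy is to reduce the free boundary problem to the interior (closed) theory by reflection across $\partial D$ --- using crucially that $\partial D$ has strictly positive second fundamental form --- and then to run the author's recent estimate-free surgery construction in this setting. Concretely, I would first fix the free boundary objects: a free boundary mean curvature flow $\{M_t=\partial K_t\cap\operatorname{int}D\}$ meeting $\partial D$ orthogonally, the associated free boundary level-set flow, and a notion of free boundary surgery in which, besides the usual interior neck surgeries, one also allows \emph{boundary} surgeries --- excising half-necks that meet $\partial D$ orthogonally and gluing in standard half-caps. Strict mean-convexity is preserved throughout: for the weak flow by comparison with free boundary barrier foliations, and for surgery by the explicit half-cap construction; strict convexity of $\partial D$ enters to guarantee that after flattening and reflecting $\partial D$ near any boundary point, the doubled configuration is still a mean-convex-type subsolution with respect to the resulting (Lipschitz, smooth-on-each-side) reflected metric.

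Next I would establish the free boundary versions of the structural ingredients of the new approach: existence and uniqueness of the free boundary weak flow from $K_0$ with the mean-convex neighborhood property; classification of tangent flows at singular points, namely round shrinking spheres and cylinders in the interior and round shrinking half-spheres and orthogonal half-cylinders on $\partial D$; and the corresponding local curvature and neck-detection estimates. The uniform device is reflection: near a point $p\in\partial D$ one obtains an honest mean-convex (in the generalized, Lipschitz-metric sense) weak flow in a full ball, to which White's stratification, the mean-convex neighborhood theorem, and the Haslhofer--Kleiner local estimates \cite{HK} apply after the standard modifications for Lipschitz backgrounds; the rigidity of the reflection then forces the free boundary classification and shows that no new singularity models or collapsing phenomena occur along $\partial D$.

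With the structure theory in hand, the surgery construction proceeds exactly as in the closed case: run the smooth free boundary flow from $K_0$; at the first time the curvature reaches a threshold $H_3$, locate --- by a blow-up/compactness argument rather than by a priori estimates --- a disjoint collection of necks and boundary half-necks separating the high-curvature region from the rest, perform surgery, discard the resulting small spheres, half-spheres and capped-off components, and iterate; pseudolocality together with the a posteriori curvature control on the post-surgery flow shows the scheme is well defined on $[0,\infty)$ and does not degenerate near $\partial D$. I expect the main obstacle to be precisely this boundary analysis: verifying that the reflection genuinely reduces the free boundary structure theory to the interior one, and that the surgery necks near $\partial D$ can always be chosen orthogonal to $\partial D$ so the half-caps glue in smoothly while keeping the flow mean-convex --- this is where strict convexity of $\partial D$ is indispensable.

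Finally, for the long-time behavior: each neck surgery is performed at scale $\sim H_3^{-1}$ and hence decreases area by a definite amount $\eta=\eta(H_3)>0$, while under the smooth flow the area of $M_t\subset D$ is non-increasing and bounded below (by the area of the eventual minimal limit, unless the flow becomes extinct in finite time); hence only finitely many surgeries ever occur, and in particular none for $t$ large. Thus for large $t$ the flow is smooth with uniformly bounded curvature --- otherwise a further surgery would be triggered --- and the free boundary version of White's convergence theorem for mean-convex flows gives smooth convergence, in the one- or two-sheeted sense according to whether $K_t$ collapses onto the limit from one or from both sides, to a finite disjoint union of connected minimal surfaces with empty or free boundary, each of which is stable since it arises as a smooth limit of the mean-convex flow. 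This proves the theorem.
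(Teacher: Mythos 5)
Your high-level outline (interior necks plus boundary half-necks, half-cap surgery, finitely many surgeries because each one drops area by a definite amount, then smooth convergence via the free boundary version of White/EHIZ) does match the overall shape of the proof, and the finite-surgery-via-area and long-time convergence part is essentially the paper's argument, which is borrowed from Brendle--Huisken's ambient work together with \cite{EHIZ} and the distance estimate to the free boundary level set flow. But your central technical device --- reflecting the flow across $\partial D$ to ``reduce to the interior (closed) theory'' and then invoking the Haslhofer--Kleiner a priori estimates --- is not what the paper does, and it runs into a gap that the paper is explicitly designed to avoid.

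Two problems. First, since $\partial D$ is strictly convex it is \emph{not} totally geodesic, so the doubled metric on $D\cup_{\partial D}D$ is only Lipschitz across $\partial D$. The structure theory you propose to import (mean-convex neighborhoods, stratification, local curvature and neck-detection estimates) is not available off the shelf for Lipschitz metrics, and ``standard modifications for Lipschitz backgrounds'' are far from routine --- this is the heart of the difficulty, not a footnote. The paper therefore does \emph{not} reflect the global flow; it works directly with free boundary weak flows in $D$ (free boundary level set flow from Giga--Sato, free boundary Brakke flows with Edelen's monotonicity and local regularity, the structure theory from \cite{EHIZ}), and uses reflection only at the level of \emph{blowup limits} in $\mathbb{R}^3_+$, where the boundary has been linearized to a flat halfplane and reflection is exact --- and even there it costs a loss of regularity ($C^1$ only near surgery caps). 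Second, and more fundamentally, the Haslhofer--Kleiner local estimates you invoke rest on Andrews' noncollapsing, which is unavailable with free boundary; the paper states this explicitly as the reason the problem was previously inaccessible, and replaces the a priori estimates entirely by the ``estimate-free'' weak-solution strategy of \cite{H_surgery}: a quantitative distance estimate to the free boundary level set flow, a hybrid compactness theorem for $\Lambda$-blowup sequences, exclusion of microscopic surgeries via Edelen's monotonicity, multiplicity-one and convexity for generalized limit flows, a canonical neighborhood theorem, and finally a continuity argument to produce a separating collection of necks and half-necks. So the missing idea in your proposal is precisely this replacement: how to obtain the canonical neighborhood/neck-detection step without noncollapsing and without a smooth reflected background. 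Your sketch names the problem (``blow-up/compactness rather than a priori estimates'') but then falls back on the HK estimates in a Lipschitz-reflected setting, which does not close the gap.
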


For the purpose of the present paper a free boundary flow with surgery is a free boundary $(\delta,\mathcal{H})$-flow in the domain $D$ as introduced in Definition \ref{def_MCF_surgery}. In particular, $\delta>0$ is a small parameter that captures the quality of the surgery necks and half necks, and $\mathcal{H}$ is a triple of curvature scales $H_{\textrm{trigger}}\gg H_{\textrm{neck}}\gg H_{\textrm{thick}}\gg 1$, which is used to specify more precisely when and how surgeries are performed. Moreover, there is a finite set of surgery times $t_i$, where (i) some necks or half necks in the presurgery domain $K_{t_i}^-$ are replaced by caps or half caps yielding a domain $K_{t_i}^\sharp\subseteq K_{t_i}^-$, 
and/or (ii) some connected components covered entirely by high curvature regions are discarded yielding the postsurgery domain $K_{t_i}^+\subseteq K_{t_i}^\sharp$.\\

As a byproduct of our existence proof we also obtain a canonical neighborhood result:

\begin{theorem}
Given any $\eps>0$, regions of sufficiently large curvature are $\eps$-close (after rescaling to unit curvature) for any sufficiently good surgery parameters to either (i) the evolution of a standard cap preceded by a round shrinking cylinder, or a round shrinking cylinder, round shrinking sphere, translating bowl or ancient oval, or (ii) the evolution of a standard half cap preceded by a round shrinking half cylinder, or a round shrinking half cylinder, round shrinking half sphere, translating half bowl or ancient half oval.
\end{theorem}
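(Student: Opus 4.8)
The plan is to argue by contradiction through a compactness argument for flows with surgery whose surgery parameters degenerate in the favorable direction; this argument is in fact run as an induction on the curvature scale, intertwined with the construction of the flow, which is why the canonical neighborhood property comes out as a byproduct of the existence proof. Suppose the assertion fails for some fixed $\eps>0$. Then there exist free boundary $(\delta_j,\mathcal{H}^j)$-flows $\mathcal{M}^j$ in $D$ with $\delta_j\to0$, with $H^j_{\mathrm{trigger}}/H^j_{\mathrm{neck}}\to\infty$, $H^j_{\mathrm{neck}}/H^j_{\mathrm{thick}}\to\infty$ and $H^j_{\mathrm{thick}}\to\infty$, and spacetime points $p_j=(x_j,t_j)$ with $H(p_j)=:\lambda_j\to\infty$, such that the parabolically rescaled flows $\mathcal{M}^j_{p_j}:=\lambda_j\cdot(\mathcal{M}^j-p_j)$ fail to be $\eps$-close, on the parabolic ball of radius $1/\eps$ about the spacetime origin, to any solution on the list. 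After passing to a subsequence, $\lambda_j\dist(x_j,\partial D)$ converges in $[0,\infty]$; call this the \emph{interior case} when the limit is $+\infty$ and the \emph{boundary case} otherwise.

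The second step is to extract a smooth ancient limit. Here the point of simultaneously sending $\delta_j\to0$ and letting the curvature ratios diverge is that, in the rescaled flows $\mathcal{M}^j_{p_j}$, every surgery either occurs at a scale which converges to $0$ or to $\infty$ — surgery necks and half-necks sit at curvature scale $\sim H^j_{\mathrm{neck}}$ and are only triggered once the curvature reaches $H^j_{\mathrm{trigger}}\gg H^j_{\mathrm{neck}}$, while the rescaling scale $\lambda_j$ ranges in between — or, if a surgery scale persists in the limit, the neck along which it is performed is a $\delta_j$-neck with $\delta_j\to0$ and hence converges to an exact round cylinder, with the glued region converging to the standard cap. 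Combining this with the local regularity theorem for weak (Brakke) mean curvature flow and with the local curvature bound on $\mathcal{M}^j_{p_j}$ supplied inductively by the canonical neighborhood property already established at the smaller scale $H^j_{\mathrm{neck}}$ — this is precisely what allows one to dispense with the noncollapsing estimate of Andrews — a subsequence of $\mathcal{M}^j_{p_j}$ converges in $C^\infty_{\mathrm{loc}}$ to a smooth ancient (possibly eternal) mean curvature flow $\mathcal{M}^\infty$, with $H\geq0$ everywhere and $H=1$ at the origin. In the interior case $\mathcal{M}^\infty$ is a flow in $\R^3$. In the boundary case, since $\partial D$ is smooth, compact and strictly convex, under the rescaling it flattens to its tangent hyperplane $\Pi$, the free boundary condition passes to orthogonal contact with $\Pi$, and $\mathcal{M}^\infty$ is a smooth ancient free boundary flow in the half-space bounded by $\Pi$; reflecting across $\Pi$ — legitimate because of the orthogonal contact and because the half-neck/half-cap structure of the surgery keeps the flow reflection-symmetric near $\partial D$ — produces a smooth ancient flow $\widehat{\mathcal{M}}^\infty$ in $\R^3$ invariant under this reflection.

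Third, one classifies the limit and reads off the contradiction. By Hamilton's strong maximum principle the limit splits off a line wherever the second fundamental form degenerates, and together with the convexity estimate for mean-convex mean curvature flow and with the structural fact — inherited from the flow with surgery by the induction on the scale — that all of its blowups and singularity models are round shrinking spheres and cylinders, the limit $\mathcal{M}^\infty$ (respectively $\widehat{\mathcal{M}}^\infty$) is weakly convex and noncollapsed. The classification of ancient noncollapsed flows in $\R^3$ of Huisken--Sinestrari, White, Haslhofer--Kleiner, Brendle--Huisken, Angenent--Daskalopoulos--Sesum, Brendle--Choi and Choi--Haslhofer--Hershkovits then forces $\mathcal{M}^\infty$ to be a round shrinking sphere, a round shrinking cylinder, a translating bowl or an ancient oval, unless $p_j$ lies within bounded rescaled spacetime distance of a surgery, in which case $\mathcal{M}^\infty$ is the evolution of a standard cap preceded by a round shrinking cylinder; in the boundary case the reflection symmetry of $\widehat{\mathcal{M}}^\infty$ restricts $\mathcal{M}^\infty$ to the corresponding half models — a round shrinking half sphere, half cylinder, half bowl or half oval, or the evolution of a standard half cap preceded by a round shrinking half cylinder. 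Since $C^\infty_{\mathrm{loc}}$ convergence gives $\eps$-closeness of $\mathcal{M}^j_{p_j}$ to $\mathcal{M}^\infty$ on the parabolic ball of radius $1/\eps$ for $j$ large, this contradicts the choice of $p_j$ and proves the theorem.

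The step I expect to be the main obstacle is the extraction of the smooth ancient limit and its classification \emph{without} a prior noncollapsing estimate: a priori the candidate limits also include collapsed weakly convex ancient flows such as the product of a grim reaper curve with a line, and ruling these out is the heart of the ``no a priori estimates'' scheme. The resolution is that $\mathcal{M}^\infty$ inherits from the flow with surgery the property that every one of its tangent flows and singularities is modeled on a standard round sphere or cylinder — a consequence of the fact that, by the induction on the curvature scale, the high-curvature regions of $\mathcal{M}^j$ at the previous stage are already known to be necks, caps, or convex caps — so that noncollapsing emerges for the limit even though it was never assumed for the flow. A secondary, free-boundary-specific difficulty is ensuring that no surgery neck straddles $\partial D$ asymmetrically, which would obstruct the reflection; this is where the strict convexity of $\partial D$ and the systematic replacement of ordinary necks by half-necks and half-caps near $\partial D$ are essential, so that the reflected limit $\widehat{\mathcal{M}}^\infty$ is a genuine smooth ancient flow in $\R^3$ to which the classification applies.
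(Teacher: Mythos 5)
Your proposal takes a genuinely different route from the paper, and the difference matters: the place where you flag the main obstacle is exactly where the two approaches diverge, and your proposed resolution is not the one the paper uses and would not work as stated.

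You propose to run the canonical neighborhood theorem as an induction on the curvature scale intertwined with the existence proof, extracting a \emph{smooth} ancient limit directly by feeding in curvature bounds already known at the smaller scale $H^j_{\mathrm{neck}}$, and then arguing that the limit ``inherits'' spherical/cylindrical singularity models and hence noncollapsing. This is essentially the Huisken--Sinestrari / Brendle--Huisken / Haslhofer--Kleiner scheme. But the whole reason this paper exists is that this scheme breaks down for free boundary flows: Andrews' noncollapsing is unavailable, and the bootstrap ``noncollapsing emerges from the limit even though it was never assumed'' is circular --- the grim-reaper-times-line product you mention is not ruled out by anything in your outline, since a collapsed limit is perfectly compatible with all of its approximators having curvature bounds at smaller scales. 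The paper does something structurally different: it proves the canonical neighborhood theorem \emph{after} (and independently of) a chain of results about weak limits, with no induction on scales and no smoothness a priori. Specifically, one passes to a hybrid Brakke-type limit (Theorem \ref{prop_compactness}) which is potentially singular away from surgery regions; one rules out microscopic surgeries via Edelen's monotonicity formula (Theorem \ref{thm_no_microscopic}); one establishes multiplicity-one via one-sided area minimization, a Bernstein-type theorem, a separation theorem and a sheeting theorem in the style of White and of Edelen--Haslhofer--Ivaki--Zhu (Theorem \ref{thm_mult_one}); and only then does partial regularity (Theorem \ref{thm_part_reg}) and convexity (Theorem \ref{thm_conv}) make the relevant connected component of the limit smooth and convex, at which point the classification of ancient solutions is invoked. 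Noncollapsing is a \emph{consequence} of these GMT arguments, not an input. Your outline omits this entire chain and replaces it with an induction that has no base case and no mechanism to exclude collapse.

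Two further gaps worth naming. First, the existence proof in the paper comes \emph{after} and \emph{uses} the canonical neighborhood theorem (via a continuity argument as in Haslhofer--Kleiner), so the ``intertwined induction'' you describe is not how the logic is organized; the canonical neighborhood theorem is proved for all $(\delta,\mathcal{H})$-flows with no reference to existence. Second, in the case where a surgery persists at the limiting scale, you assert the limit is a cylinder-then-standard-cap by general principles, but the actual argument requires showing that a surgery neck in the limit separates the time slice into exactly two unbounded ends, forcing the convex limit to split off a line; this uses the minimality of the neck collection in Definition \ref{def_MCF_surgery}(\ref{def_mcf_surgery2}) and the smooth strict mean-convexity of the limit, not just $\delta_j\to 0$. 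Your reflection discussion in the boundary case is in the right spirit (the paper reflects via Edelen's framework), but it enters only once smoothness and convexity have been established, not before.
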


\begin{remark}The existence and canonical neighborhood theorem will be applied in forthcoming work with Ketover \cite{HKetover2}, based on the methodology from our earlier paper \cite{HKetover}, to address the existence problem for $3$ free boundary minimal disks in convex balls.
\end{remark}

To prove our results we implement our recent new approach from \cite{H_surgery}, which is based on the theory of weak solutions rather than a priori estimates for smooth solutions, in the free boundary setting. Specifically, 
to begin with, after introducing the necessary definitions, we first prove a quantitative estimates for the space-time Hausdorff distance between the free boundary flow with surgery and the free boundary level set flow.
We then study sequences $\K^j$ of free boundary $(\delta,\mathcal{H}^j)$-flows, with the same mean-convex initial condition $K_0\subset D$, where $\delta\leq\bar{\delta}$ and where the curvature scales $\mathcal{H}^j$ improve along the sequence.
Given any sequence of rescaling factors $\lambda_j\to \infty$, we consider the blowup sequence $\widetilde{\K}^j$,
which is obtained from $\mathcal{K}^j$ by translating $X_j$ to the origin and parabolically rescaling by $\lambda_j$. To keep track of multiplicities we also consider the associated family of Radon measures $\widetilde{\mathcal{M}}^{j}$.

We then establish a hybrid compactness theorem, which allows us to pass to a limits of such generalized blowup sequences, which are smooth near the surgery regions but potentially singular in all other regions. Moreover, using Edelen's monotonicty formula for free boundary flows from \cite{Edelen_brakke} we rule out microscopic surgeries, namely the potential scenario that one has convergence to a plane or half plane with surgeries satisfying $\lambda_j/H_{\textrm{neck}}^j\to 0$.

Next, we generalize the theory of mean-convex Brakke flows with free boundary by Edelen, Ivaki, Zhu and the author \cite{EHIZ} to our setting of hybrid limits of free boundary flows with surgery, and in particular establish multiplicity-one, regularity and convexity. As a consequence of these results, taking also into account the recent classification of ancient solutions from \cite{BC,ADS}, we then establish the canonical neighborhood theorem. 

Using the canonical neighborhood theorem, via a continuity argument similarly as in \cite{HK}, we can establish the existence of free boundary flow with surgery on arbitrarily large time intervals. Finally, borrowing an argument of Brendle-Huisken \cite{BrendleHuisken_ambient}, in combination with the long-time limit results from \cite{EHIZ} and our distance estimate to the free boundary level set flow, we can upgrade this to existence for all times and convergence.

\bigskip

\noindent\textbf{Acknowledgments.}
I thank Jonathan Zhu for helpful discussions. This research has been supported by an NSERC Discovery Grant and a Sloan Research Fellowship.
\bigskip

\section{Definitions and basic properties}

In this section, we define free boundary flows with surgery and establish some basic properties.  Let us begin with the following flexible notion of free boundary $\delta$-flows:

\begin{definition}[free boundary $\delta$-flow]\label{def_alphadelta}
A \emph{free boundary $\delta$-flow} $\K$ in $D$ is a collection of finitely many smooth free boundary strictly mean-convex mean curvature flows $\{K_t^i\subset D\}_{t\in[t_{i-1},t_{i}]}$ ($i=1,\ldots,k$; $t_0<\ldots< t_k$) such that:
\begin{enumerate}[(a)]
\item for each $i=1,\ldots,k-1$, the final time slices of some collection of disjoint strong $\delta$-necks or strong half $\delta$-necks (see Definition \ref{def_strongneck}) are replaced by pairs of standard caps or standard half caps (see Definition \ref{def_replacestd}),
 giving a domain $K^\sharp_{t_{i}}\subseteq K^{i}_{t_{i}}=:K^-_{t_{i}}$.\label{def_delta_1st}\label{item_delta1}
\item the initial time slice of the next flow, $K^{i+1}_{t_{i}}=:K^+_{t_{i}}$, is obtained from $K^\sharp_{t_{i}}$ by discarding some connected components.
\item there exists some $r_\sharp=r_\sharp(\K)>0$, such that all necks and half necks in item \eqref{item_delta1} have radius $r\in[\frac{1}{2}r_\sharp,2 r_\sharp]$.
\end{enumerate}
\end{definition}

The above definition relies on the following two further definitions:

\begin{definition}[strong $\delta$-neck and strong half $\delta$-neck]\label{def_strongneck}
We say that a free boundary $\delta$-flow $\K=\{K_t\subset D\}_{t\in I}$ has a
\begin{itemize}
\item \emph{strong $\delta$-neck} with center $p\in \mathrm{Int}(D)$ and radius $r$ at time $t_0\in I$, if the rescaled flow
$\{r^{-1}(\exp_{p}|_{B_{1/\delta}(0)})^{-1}(K_{t_0+r^2t})\}_{t\in(-1,0]}$ is $\delta$-close in $C^{\lfloor 1/\delta\rfloor}$ in $B_{1/\delta}(0)\times (-1,0]$ to the evolution of a solid round cylinder $\bar{B}^{2}\times \R$ with radius $1$ at $t=0$.
\item \emph{strong half $\delta$-neck} with center $p\in \partial D$ and radius $r$ at time $t_0\in I$, if the rescaled flow
$\{r^{-1}(\Phi_{p}^{-1}|_{B^+_{1/\delta}(0)})^{-1}(K_{t_0+r^2t})\}_{t\in(-1,0]}$ is $\delta$-close in $C^{\lfloor 1/\delta\rfloor}$ in $B^+_{1/\delta}(0)\times (-1,0]$ to the evolution of a solid round half cylinder $\bar{B}_+^{2}\times \R$ with radius $1$ at $t=0$.
\end{itemize}
\end{definition}

Here, we recall that for any $p\in \partial D$ the normal exponential map defines a diffeomorphism $\Phi_p$ from an open neighborhood of $p$ in $D$ to an open half ball $B^+_\rho(0)\subset\mathbb{R}^3_+:=\mathbb{R}_{\geq 0}\times\mathbb{R}^2$.

\begin{definition}[replacement by standard caps or standard half caps]\label{def_replacestd}
Given a cap separation parameter $\Gamma<\infty$ and any $\delta\leq\tfrac{1}{100\Gamma}$, 
we say that the final time slice of a strong $\delta$-neck  with center $p\in \mathrm{Int}(D)$ and radius $r$ \emph{is replaced by a pair of standard caps} or the final time slice of a strong half $\delta$-neck  with center $p\in\partial D$ and radius $r$  \emph{is replaced by a pair of standard half caps}, respectively,
if the presurgery domain $K^-$ is replaced by a postsurgery domain $K^\sharp\subseteq K^-$ with free boundary in $D$, such that:
\begin{enumerate}[(a)]
\item the modification takes places inside $B=B(p,5\Gamma r)$.
 \item there are bounds for the second fundamental form and its derivatives:
$$\sup_{\partial K^\sharp\cap B}|{\nabla^\ell A}|\leq C_\ell r^{-1-\ell}\qquad (\ell=0,1,2,\ldots).$$
 \item for every point $p_\sharp\in \partial K^\sharp\cap B$ with $\lambda_1(p_\sharp)< 0$, there is a point
 $p_{-}\in\partial K^{-}\cap B$ with $$\frac{\lambda_1}{H}(p_{-})\leq\frac{\lambda_1}{H}(p_{\sharp}).$$
 \item the domain $r^{-1}(\exp_{p}|_{B_{10\Gamma r}(0)})^{-1} (K^\sharp)$ is $\delta'(\delta)$-close in $B(0,10\Gamma)$ to a pair of standard caps, or 
 the domain $r^{-1}(\Phi_{p}|_{B_{10\Gamma r}(0)})^{-1} (K^\sharp)$ is $\delta'(\delta)$-close in $B^+(0,10\Gamma)$ to a pair of standard half caps, respectively,
that are at distance $\Gamma$ from the origin. Here, $\delta'(\delta)\to 0$ as $\delta\to 0$.\label{def_surgery_delta_hat}
\end{enumerate}
\end{definition}

Here, a \emph{standard cap} $K^{\textrm{st}}\subset \R^3$  is a smooth convex domain  that coincides with a solid round half-cylinder of radius $1$ outside a ball of radius $2$. Similarly, we call $K^{\textrm{st}}\cap \mathbb{R}^3_+$ a \emph{standard half cap}.
\bigskip

A free boundary flow with surgery is a free boundary $(\delta,\mathcal{H})$-flow defined as follows:

\begin{definition}[free boundary flow with surgery]\label{def_MCF_surgery}
A \emph{free boundary $(\delta,\mathcal{H})$-flow} in $D$, where $\mathcal{H}=(H_{\textrm{thick}},H_{\textrm{neck}},H_{\textrm{trigger}})$, is a free boundary $\delta$-flow $\{K_t\subset D\}_{t\geq 0}$ with smooth free boundary strictly mean-convex initial condition $K_0\subset D$ such that:
\begin{enumerate}[(a)]
\item $H\leq H_{\textrm{trigger}}$ everywhere, and
surgery and/or discarding occurs precisely at times $t$ when $H=H_{\textrm{trigger}}$ somewhere. 
\item The collection of necks and half necks in Definition \ref{def_alphadelta}\eqref{def_delta_1st} is a minimal collection of solid $\delta$-necks and solid half $\delta$-necks of curvature $H_{\textrm{neck}}$ which
separate the set $\{H=H_{\textrm{trigger}}\}$ from $\{H\leq H_{\textrm{thick}}\}$ in $K_t^-$.\label{def_mcf_surgery2}
\item $K_t^+$ is obtained from $K_t^\sharp$ by discarding precisely those connected components with $H>\tfrac{1}{10}H_{\textrm{neck}}$ everywhere. For each pair of facing surgery caps or surgery half caps, precisely one is discarded. 
\item If a strong $\delta$-neck or strong half $\delta$-neck from item \eqref{def_mcf_surgery2} also is a strong $\hat{\delta}$-neck or strong half $\hat{\delta}$-neck for some $\hat{\delta}<\delta$, then property \eqref{def_surgery_delta_hat} of Definition \ref{def_replacestd} also holds with $\hat{\delta}$ instead of $\delta$.
\end{enumerate}
\end{definition}

As a consequence of the definitions we have the following basic properties:

\begin{proposition}[basic properties]\label{basic_prop}
There exist $\bar{\delta}>0$ and $\Gamma_0<\infty$, such that any free boundary $\delta$-flow $\mathcal{K}$ in $D$ with surgery quality $\delta\leq\bar{\delta}$ and cap separation parameter $\Gamma\geq \Gamma_0$ satisfies the following:
\begin{enumerate}[(a)]
\item If $p$ is the center of a surgery neck or surgery half neck of radius $r$, then there are no other surgeries in $B(p,\frac{1}{10}\delta^{-1} r)$.
\item For every ball ${B}$ we have $|{\partial K_{t_1}\cap {B}}|\leq |{\partial K'\cap {B}}|$
for every $K'$ that agrees with $K_{t_1}$ outside ${B}$ and satisfies $K_{t_1}\subseteq K'\subseteq K_{t_0}$ for some $t_0<t_1$.
\end{enumerate}
Moreover,  any free boundary $(\delta,\mathcal{H})$-flow in $D$ for $t\leq T$ satisfies $H>C^{-1}$ and $|A|\leq CH$, where $C=C(D,K_0,T)<\infty$.
\end{proposition}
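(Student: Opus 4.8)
The plan is to combine the strong neck structure with the minimality of the surgery collection in Definition~\ref{def_MCF_surgery}\eqref{def_mcf_surgery2}. If $p$ is the center of a surgery (half) neck of radius $r$ at a surgery time $t_i$, then by Definition~\ref{def_strongneck} the $r^{-1}$-rescaling of $\K$ about $p$ is $\delta$-close to the evolution of a solid round (half) cylinder on $B_{1/\delta}(0)\times(-1,0]$; in particular $H$ lies within a factor $1+o_\delta(1)$ of $H_{\textrm{neck}}$ on $B(p,\tfrac{1}{10}\delta^{-1}r)$, so for $\bar\delta$ small enough $H<H_{\textrm{trigger}}$ there and no surgery is triggered in this ball at time $t_i$. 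To rule out other surgery (half) necks whose center or modification region meets this ball, I would use that every neck of the minimal collection has radius $\approx r_\sharp\approx H_{\textrm{neck}}^{-1}\approx r$ and modification region of radius $\ll\tfrac{1}{10}\delta^{-1}r$; such a neck would then sit with its cross section well inside the long cylindrical portion of the neck at $p$, and deleting it would leave the collection still separating $\{H=H_{\textrm{trigger}}\}$ from $\{H\le H_{\textrm{thick}}\}$ in $K^-_{t_i}$, contradicting minimality. Finally, the curvature-derivative bounds on the standard (half) caps (Definition~\ref{def_replacestd}(b)) together with a short-time smoothness estimate show $H$ stays $\approx H_{\textrm{neck}}<H_{\textrm{trigger}}$ on $B(p,\tfrac{1}{10}\delta^{-1}r)$ for the short time the flow needs to exit this region, so no later surgery occurs there either.

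\textbf{Property (b).} I would prove this by a calibration argument. On the region $K_{t_0}\setminus\mathrm{Int}(K_{t_1})$ I would build a unit vector field $N$ that is normal to the time slices $\partial K_t$ on the smooth portions of the flow, is tangent to $\partial D$, and satisfies $\div N\ge0$: on each smooth piece strict mean-convexity makes $K_a\setminus\mathrm{Int}(K_b)$ a smooth foliation by the slices $\partial K_t$ whose outward normal has divergence equal to the leaf's (positive) mean curvature, and the free boundary condition makes the leaves meet $\partial D$ orthogonally, so $N$ is tangent to $\partial D$; in each surgery modification ball the swept region $K^-_{t_i}\setminus\mathrm{Int}(K^\sharp_{t_i})$ is, after rescaling, $\delta'$-close to the region trapped between a round (half) cylinder and a convex standard (half) cap, which admits an explicit foliation by mean-convex leaves interpolating between the two with mean curvature bounded below by a fixed positive multiple of $r^{-1}$, so $\div N\ge0$ survives the perturbation; and the discarded components carry no boundary of $K^+_{t_i}$ and only help. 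Then, for a competitor $K'$ with $K_{t_1}\subseteq K'\subseteq K_{t_0}$ agreeing with $K_{t_1}$ off $B$, I would apply the divergence theorem to $N$ on $W:=K'\setminus\mathrm{Int}(K_{t_1})$, which lies in $B$: its boundary decomposes into a part on $\partial K'$ where $N\cdot\nu_{K'}\le1$, a part on $\partial K_{t_1}$ — a leaf, where $N\cdot(-\nu_{K_{t_1}})=-1$ — and a part on $\partial D$ where $N$ is tangent, whence
\[
0\ \le\ \int_W\div N\ \le\ |\partial K'\cap B|-|\partial K_{t_1}\cap B| .
\]

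\textbf{The remaining estimates.} For $H>C^{-1}$ I would use $\partial_tH=\Delta H+(|A|^2+\Ric(\nu,\nu))H$ on the smooth pieces together with the free boundary evolution identity $\nabla_\eta H=A^{\partial D}(\nu,\nu)H\ge0$ for the outward conormal $\eta$ along $\partial D$, valid since $\partial D$ has positive second fundamental form (as in \cite{EHIZ}); since $D$ is compact so that $|\Ric|\le C(D)$, the maximum principle with the Hopf lemma at the free boundary gives $\min H\ge e^{-C(D)(b-a)}\min H|_{t=a}$ on $[a,b]$, while at a surgery time the cap regions have $H\approx H_{\textrm{neck}}\gg1$ by Definition~\ref{def_replacestd}(b) and convexity of the standard caps, and discarding only raises $\min H$; inducting over the finitely many surgery times yields $H\ge C(D,K_0,T)^{-1}$ for $t\le T$. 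For $|A|\le CH$ I would run the analogous argument for $\lambda_1/H$: it is bounded below on the compact initial surface $K_0$; under smooth free boundary flow it satisfies a reaction--diffusion inequality with a favorable boundary term (again by convexity of $\partial D$, as in \cite{EHIZ}), and since $H$ is now bounded below the ambient-curvature term in its evolution is controlled, so $\inf(\lambda_1/H)$ decreases by at most $C(D,K_0,T)$ on $[0,T]$; it is $\ge0$ on the convex surgery caps; and Definition~\ref{def_replacestd}(c) together with discarding ensures $\inf_{\partial K^+_{t_i}}(\lambda_1/H)\ge\inf_{\partial K^-_{t_i}}(\lambda_1/H)$. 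Hence $\inf_{\partial K_t}(\lambda_1/H)\ge-C(D,K_0,T)$ for $t\le T$, and since $H>0$ and $\lambda_1\le\tfrac{1}{2}H$ this gives $|\lambda_1|,|\lambda_2|\le C'H$, so $|A|\le CH$.

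\textbf{Main obstacle.} The smooth-flow maximum principles and the divergence-theorem computation are routine; the work is in controlling everything across surgeries. For (b) this is the construction of the auxiliary mean-convex foliation on the region swept out by a surgery and the check that $\div N$ remains $\ge0$ after the $\delta'$-perturbation — precisely where the design of the standard caps and the closeness function $\delta'(\delta)$ enter — plus the (fiddly but elementary) bookkeeping for discarded components; for the remaining estimates it is tracking that neither $\min H$ nor $\inf(\lambda_1/H)$ is spoiled by a cap replacement, for which the curvature bounds of Definition~\ref{def_replacestd}(b), the convexity of the standard caps, and the monotonicity clause (c) are exactly what is needed. I expect the foliation construction near the caps to be the most technical step, though it reduces to a perturbation of an explicit model.
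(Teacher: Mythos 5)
The paper's own proof is three lines: (a) is declared immediate from the definitions, (b) is delegated to the geometric measure theory argument of \cite[Section~3]{White_size}, and the curvature estimates are delegated to \cite{Edelen_convexity}. Your proposal supplies much more detail, and for (b) and the final estimates the underlying mechanism (calibration by the normal field of the level-set foliation with $\div N = H \ge 0$; maximum principle for $H$ and for $\lambda_1/H$ with the favorable conormal term coming from convexity of $\partial D$) is the same one behind the cited references, so these parts are essentially aligned even though you re-derive rather than cite.

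There is, however, a genuine gap in your argument for (a). Property (a) is asserted for \emph{arbitrary} free boundary $\delta$-flows, yet your argument uses the $(\delta,\mathcal{H})$-flow axioms: you invoke $H<H_{\textrm{trigger}}$ on the neck to say ``no surgery is triggered in this ball,'' and you invoke minimality of the separating collection from Definition~\ref{def_MCF_surgery}\eqref{def_mcf_surgery2} to delete a putative nearby neck. Neither the trigger-curvature dichotomy nor the minimality clause is available in the setting of Definition~\ref{def_alphadelta}; a $\delta$-flow simply performs surgery on \emph{some} disjoint collection of strong $\delta$-necks and half $\delta$-necks with radii in $[\tfrac12 r_\sharp, 2r_\sharp]$, with no condition on $H$ and no minimality. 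What the paper means by ``follows directly from the definitions'' is the more elementary geometric fact that a strong $\delta$-neck of radius $r$ models the flow on a solid region of diameter comparable to $\delta^{-1}r$, so that two \emph{disjoint} such necks of comparable radius $\approx r_\sharp$ (Definition~\ref{def_alphadelta}(c)) must have their centers separated by a definite multiple of $\delta^{-1}r$, which in particular exceeds $\tfrac{1}{10}\delta^{-1}r$. Your additional remark about ``no later surgery'' is also not what (a) asserts --- the statement concerns other surgeries at the same surgery time --- and would not be true globally in time. A secondary, more minor point: for (b) you propose to explicitly foliate each surgery gap $K^-_{t_i}\setminus K^\sharp_{t_i}$ by mean-convex interpolating leaves; White's argument as used in the paper is a set-theoretic/current comparison that does not require constructing such a foliation by hand, and relying on the explicit foliation introduces a nontrivial verification (that the $\delta'$-perturbed interpolation really stays mean-convex) that you flag but do not carry out.
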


\begin{proof}
The spatial separation of surgeries follows directly from the definitions. Next, the one-sided minimization follows using the geometric measure theory argument in \cite[Section 3]{White_size}. Finally, the claimed bounds for $H$ and $|A|/H$ follow from  \cite{Edelen_convexity} (which simplifies a lot in our setting thanks to the convexity of $D$), and the definition of surgeries.
\end{proof}

\begin{convention}\label{conv}
We now fix a suitable standard cap $K^{\textrm{st}}$ and cap separation parameter $\Gamma<\infty$. Moreover, throughout this paper $\bar{\delta}>0$ and $\eps>0$ denote sufficiently small constants (by convention, these constants can be decreased finitely many times as needed or convenient).
\end{convention}

\bigskip

\section{Distance to free boundary level set flow} 

In this section, we prove a quantitative estimate for the distance to the free boundary level set flow from \cite{GigaSato}. We will identify free boundary $\delta$-flows with their spacetime track
\begin{equation}
\mathcal{K}=\bigcup_t K_t \times\{t\} \subset D\times \mathbb{R},
\end{equation}
where we set $K_{t_i}:=K_{t_i}^-$ at surgery times to ensure that $\mathcal{K}$ is a closed subset of space-time.

\begin{proposition}[distance to free boundary level set flow]\label{prop_levelset}
Given any $T<\infty$, there exist constants $\bar{\delta}=\bar{\delta}(D,T)>0$ and $C=C(D,K_0,T)<\infty$ such that if $\K$ is a free boundary
$(\delta,\mathcal{H})$-flow ($\delta\leq\bar{\delta}$) with initial condition $K_0$, where $H_{\textrm{neck}}\geq C$, and $\L$ is the free boundary level set flow with the same initial condition $K_0$, then
\begin{equation}
d_{\mathrm{H}}\left(\K\cap \{t\leq T\},\L\cap \{t\leq T\}\right)\leq C H_{\mathrm{neck}}^{-1}\, ,
\end{equation}
where $d_{\mathrm{H}}$ denotes the Hausdorff distance of the spacetime tracks.
\end{proposition}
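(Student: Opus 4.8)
The plan is to bound the two halves of the spacetime Hausdorff distance separately, implementing the strategy of \cite{H_surgery} in the free boundary setting. For the inclusion $\K\subseteq\L$ I would induct over the finitely many surgery times $0=t_0<t_1<\dots<t_m$: on each interval $[t_{i-1},t_i]$ the flow is a smooth free boundary mean curvature flow, hence coincides there with the free boundary level set flow of \cite{GigaSato} issuing from $K^+_{t_{i-1}}$ by consistency of the weak flow with smooth solutions; at each surgery time $K_{t_i}=K^-_{t_i}=\lim_{t\uparrow t_i}K_t\subseteq\lim_{t\uparrow t_i}L_t=L_{t_i}$ by continuity of the level set flow; and since every surgery step only shrinks the domain ($K^+_{t_i}\subseteq K^\sharp_{t_i}\subseteq K^-_{t_i}$), the comparison and semigroup properties of the free boundary level set flow propagate $K_t\subseteq L_t$ to all $t\ge 0$. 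The first half of the Hausdorff distance therefore vanishes, and it remains to show that every $(x,t)\in\L$ with $t\le T$ satisfies $d\big((x,t),\K\big)\le CH_{\textrm{neck}}^{-1}$.

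For the reverse inclusion the mechanism is that each surgery creates a discrepancy between $\L$ and $\K$ that is localized at the surgery scale $r_\sharp\approx H_{\textrm{neck}}^{-1}$ and heals within time $\lesssim H_{\textrm{neck}}^{-2}$, so that the discrepancies of different surgeries neither accumulate in time nor interact in space. Consider a surgery at time $t_i$. Its center $p$ is $\tfrac1{10}\delta^{-1}r$-separated from every other surgery by Proposition~\ref{basic_prop}(a), and since the surgery flow is monotonically shrinking the ball $B(p,20\Gamma r)$ around a neck (or half neck) being cut is disjoint from every previously discarded component and from every other surgery region; so, by the inductive hypothesis, there is no leftover discrepancy inside $B(p,20\Gamma r)$ — there $L_{t_i}=K^-_{t_i}$, which by construction contains a strong $\delta$-neck of radius $r\approx H_{\textrm{neck}}^{-1}$ centered at $p$, and $L_t=K_t$ on $\partial B(p,20\Gamma r)$ for all $t\ge t_i$. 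Since the surgery modification sits in $B(p,5\Gamma r)$ (Definition~\ref{def_replacestd}(a)), a localized comparison argument on $B(p,20\Gamma r)$ sandwiches $L_t$ near $p$, for $t\ge t_i$, between the smooth surgery flow $K_t$ from below and the outer barrier $K_t\cup N_t$ from above, where $N_t$ is the evolution of a coaxial round shrinking (half) cylinder of radius $(1+2\delta)r$ through $p$: this is a genuine (super)solution contained in $B(p,6\Gamma r)$ that becomes empty for $t\ge t_i+Cr^2$, so near $p$ the discrepancy disappears by time $t_i+Cr^2\lesssim H_{\textrm{neck}}^{-2}$, and while it is present it lies in $N_t$, hence within distance $O(r)$ of the solid neck in the slice $K^-_{t_i}\times\{t_i\}\subseteq\K$, i.e.\ within $CH_{\textrm{neck}}^{-1}$ of $\K$.

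For a discarded component $C\subseteq K^-_{t_i}$ I would instead use that its boundary, including the attached surgery caps, satisfies $H\ge\tfrac1{10}H_{\textrm{neck}}$ everywhere (Definition~\ref{def_MCF_surgery}(c) together with the curvature bounds on standard caps), so by the maximum principle $H\ge\tfrac1{10}H_{\textrm{neck}}$ is preserved, the inradius of $C$ is $\le 20H_{\textrm{neck}}^{-1}$, and the (free boundary, if applicable) mean-convex flow of $C$ becomes extinct by time $t_i+CH_{\textrm{neck}}^{-2}$; meanwhile the level set flow restricted to that region stays inside $C\subseteq K^-_{t_i}=K_{t_i}\subseteq\K$, so every such spacetime point is within time $\lesssim H_{\textrm{neck}}^{-2}$ of $\K$. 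Since distinct surgeries are $\tfrac1{10}\delta^{-1}r$-separated in space and the monotone surgery flow never re-covers a discarded region, these local discrepancies are mutually disjoint; as the Hausdorff distance is a supremum they do not accumulate over the (a priori large) number of surgeries, and one concludes that every $(x,t)\in\L\cap\{t\le T\}$ is within $CH_{\textrm{neck}}^{-1}$ of $\K$. Throughout one needs $\delta\le\bar\delta(D,T)$ and $H_{\textrm{neck}}\ge C(D,K_0,T)$ so that the surgery scale is small compared with the geometry of $D$ and with $T$, the bounds $H>C^{-1}$ and $|A|\le CH$ of Proposition~\ref{basic_prop} hold, the $\delta$-necks are quantitatively round, and the (half) cylindrical barriers fit orthogonally against $\partial D$; the strict convexity of $\partial D$, as elsewhere in the paper, is what keeps the free boundary level set flow and these half barriers well behaved near $\partial D$.

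The main obstacle, I expect, is the localized barrier step. Mean curvature flow has no finite propagation speed, so it is not a priori clear that the discrepancy between $\L$ and $\K$ stays confined to scale $\Gamma r_\sharp$ near each surgery rather than spreading along the flow; making this precise, uniformly in the number of surgeries and without a priori estimates, requires carefully constructing the outer barriers $K_t\cup N_t$ — in particular checking that the concave ``joints'' where the smooth surgery flow meets the cylindrical caps are supersolutions for the free boundary level set flow — together with a quantitative statement that the level set flow of a strong $\delta$-neck pinches off by time $t_i+Cr^2$, and then patching these local pictures across all surgery times while keeping every constant independent of $H_{\textrm{neck}}$ so that the final bound genuinely tends to zero. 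The free boundary contributes only bookkeeping — half necks, half caps, free boundary discarded components — once $\partial D$ is taken convex.
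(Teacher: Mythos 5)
Your handling of the inclusion $\K\subseteq\L$ is fine and essentially equivalent to the paper's one-liner (the paper simply observes that surgery flows never bump into smooth free boundary subsolutions, so they lie inside the maximal such family $\L$; your induction over surgery intervals is a more hands-on rendering of the same fact). The reverse bound, however, is where you take a genuinely different route, and that route has the gap you yourself flag. The paper does \emph{not} try to track the discrepancy $\L\setminus\K$ locally near each surgery with barriers. Instead it runs a single global comparison: it picks $\gamma\sim H_{\mathrm{neck}}^{-1}$, finds a time $\tau\sim\gamma$ so that $K_\tau$ sits at distance exactly $\gamma$ inside $K_0$, and considers the level set flow $\mathcal{L}^\tau$ started from $K_\tau$. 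The two inputs are: (i) a surgery at time $t$ only removes points within distance $\eta H_{\mathrm{neck}}^{-1}$ of $\partial K^-_t$, so any ball $B_r\subseteq K^-_t\cap\mathrm{Int}(D)$ of radius $r>\eta H_{\mathrm{neck}}^{-1}$ survives into $K^+_t$; and (ii) by mean curvature comparison (using convexity of $\partial D$ and a Ricci lower bound $-\kappa$), the inner distance $d(L^\tau_t,D\setminus K_t)$ can decay at most like $e^{-\kappa t}$. Together these give $\mathcal{L}^\tau\subseteq\K\subseteq\L$, and since $\mathcal{L}^\tau$ and $\mathcal{L}$ differ only by a time shift $\tau\sim H_{\mathrm{neck}}^{-1}$, the Hausdorff bound follows at once. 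There is no need to localize the discrepancy or to understand how it heals.

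Your local-barrier strategy, by contrast, needs to justify that $\L\setminus\K$ is confined to a neighborhood of the surgery region and dies within time $\sim H_{\mathrm{neck}}^{-2}$, and then to patch this across all surgeries. Both steps have real issues. First, the outer barrier $K_t\cup N_t$ is not simply a union of two mean curvature flows: $K_t$ itself jumps at later surgery times and is only an inner barrier, so the containment $L_t\subseteq K_t\cup N_t$ does not follow from the avoidance principle alone; you would have to re-run the argument across every subsequent surgery, which is precisely where the paper's ball-survival observation is doing the work for you. Second, your inductive hypothesis ``no leftover discrepancy inside $B(p,20\Gamma r)$'' is stronger than what the induction actually produces (which is only a distance bound to $\K$, not exact agreement), so the setup risks circularity unless formulated with care. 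In short, you identify the right obstacle (``making this precise, uniformly in the number of surgeries''), but the paper resolves it by a cleaner global mechanism — an inner comparison flow protected by a distance function that decays at worst exponentially and is never breached by surgeries — rather than by constructing local barriers at all. I would recommend replacing the second half of your argument with this distance comparison: formulate the claim $d(L^\tau_t,D\setminus K_t)\geq\gamma e^{-\kappa t}$, prove the differential inequality for $f(t)=e^{\kappa t}d(L^\tau_t,D\setminus K_t)$ away from surgery times using mean curvature comparison at the distance-realizing interior points (which exist thanks to strict convexity of $\partial D$), and handle surgery times with the ball-survival observation.
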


\begin{proof}
Recall from \cite{EHIZ} that the free boundary level set flow is the maximal family of closed sets starting at $K_0$ that does not bump into any smooth free boundary subsolution of the mean curvature flow.
	Observing that free boundary $(\delta,\mathcal{H})$-flows do not bump into any smooth free boundary subsolution of the mean curvature flow, we thus get $\K\subseteq \L$.\\
	
	To estimate the distance from the other direction, observe that similarly as in \cite[Claim 3.2]{H_surgery}, there exists an $\eta<\infty$, such that if $t$ is a surgery time of $\mathcal{K}$ and $B_r \subseteq K^{-}_t\cap\mathrm{Int}(D)$ is a geodesic ball of radius $r> \eta H_{\mathrm{neck}}^{-1}$, then $B_r \subseteq K^{+}_t$. Also, note that there is a $C_0=C_0(K_0)<\infty$, such that for all $\gamma>0$ small enough there is some $\tau \in [C_0^{-1}\gamma, C_0 \gamma]$, such that \begin{equation}
d(K_\tau,D\setminus K_0)=\gamma.
\end{equation}
Denoting by $-\kappa$ a lower bound for the Ricci curvature, we choose $\gamma=\eta e^{\kappa T}/H_{\textrm{neck}}$, which is allowed provided $H_{\textrm{neck}}$ is large enough. With the corresponding $\tau$, we let $\mathcal{L}^\tau$ be the level set flow with initial condition $L^\tau_0=K_\tau$.

\begin{claim}[evolution of distance]\label{claim_incl}
We have $ L^\tau_t \subseteq K_t$ for all $t
\leq T$ with the estimate
\begin{equation}
d( L^\tau_t, D\setminus K_t)\geq \gamma e^{-\kappa t}.
\end{equation}
\end{claim}

\begin{proof}
Consider the function $f(t):=e^{\kappa t}d( L^\tau_t, D\setminus K_t)$, and note that $f(0)=\gamma$. Now, denoting by $\frac{d}{dt}$ the lim inf of difference quotients and by $H$ the mean curvature in the viscosity sense, away from the surgery times we can estimate
\begin{equation}
\frac{d}{dt}f(t)\geq \kappa e^{\kappa t}d( L^\tau_t, D\setminus K_t) + e^{\kappa t} (H(p)-H(q))\geq 0,
\end{equation}
where we observed that thanks to the strict convexity of $D$ the distance at time $t$ is realized by points $p\in \partial L^\tau_t\cap \mathrm{Int}(D)$ and $q\in \partial K_t\cap \mathrm{Int}(D)$, and these points satisfy $H(p)-H(q)\geq -\kappa d(p,q)$ by mean curvature comparison. Finally, if $t_i$ is a surgery time, then  $d(L^\tau_{t_i},D\setminus K_{t_i}^-) \geq \gamma e^{-\kappa t}$ 
    together with the above observation implies $d(L^\tau_{t_i},D\setminus K_{t_i}^+) \geq \gamma e^{-\kappa t}$.
\end{proof}    
    
Summarizing, we have shown that $\mathcal{L}^\tau \subseteq \mathcal{K} \subseteq \mathcal{L}$,
 where $\tau$ is comparable to $H_{\mathrm{neck}}^{-1}$. Since $\mathcal{L}^\tau$ and $\mathcal{L}$ just differ by a shift by $\tau$ in time direction, this implies the assertion.
\end{proof}

\bigskip

\section{Hybrid compactness theorem for free boundary flows}

Let  $\K^j$ be a sequence of free boundary $(\delta,\mathcal{H}^j)$-flows, where $\delta\leq\bar{\delta}$, with the same strictly mean-convex initial condition $K_0$, and suppose that $\mathcal{H}^j\to\infty$, 
where we use the abbreviation
\begin{equation}
\mathcal{H}^j\to \infty \quad:\Leftrightarrow\quad
\min\left(H^j_{\textrm{thick}} , \frac{  H^j_{\textrm{neck}}}{H^j_{\textrm{thick}} }, \frac{  H^j_{\textrm{trigger}}}{H^j_{\textrm{neck}} }   \right)\to \infty.
\end{equation}
Let $\lambda_j\to \infty$ be a sequence of rescaling factors, and let $X_j=(p_j,t_j)\in\partial \mathcal{K}^j$ be a sequence of space-time points satisfying $ t_j\leq T$ for some $T<\infty$. Consider the blowup sequence
\begin{equation}
\widetilde{\K}^j:=\mathcal{D}_{\lambda_j}( \K^j-X_j),
\end{equation}
which is obtained from $\mathcal{K}^j$ by translating $X_j$ to the origin and parabolically rescaling by $\lambda_j$, and where for ease of notation we pretend that $D$ is a subset of three-dimensional Euclidean space. Moreover, we also consider the associated family of Radon measures
\begin{equation}\label{ass_radon}
\widetilde{\mathcal{M}}^{j}=\big\{  \tilde{\mu}^j_t=\mathcal{H}^2 \lfloor \partial \tilde{K}^j_t\big\},
\end{equation}
where we set $K_t:=K_t^-$ at surgery times. Furthermore, we can assume that 
\begin{equation}
\Lambda:=\lim_{j\to \infty} \frac{\lambda_j}{H_{\textrm{neck}}^j} \in [0,\infty]
\end{equation}
exists. We then say that $(\widetilde{\mathcal{K}}^j,\widetilde{\mathcal{M}}^j)$ is a \emph{$\Lambda$-blowup sequence}. For $\Lambda=0$ we have:

\begin{proposition}[small blowups]\label{prop_L0}
 If $\Lambda=0$, then a subsequence of $\widetilde{\mathcal{K}}^j$ Hausdorff converges to a limit $\mathcal{K}$, which is a blowup limit of the free boundary level set flow of $K_0$. 
\end{proposition}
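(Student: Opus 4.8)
The plan is to exploit the chain of inclusions established in Proposition \ref{prop_levelset}. Recall that $\mathcal{L}^\tau \subseteq \mathcal{K}^j \subseteq \mathcal{L}$, where $\mathcal{L}$ is the free boundary level set flow of $K_0$ and $\mathcal{L}^\tau$ is the level set flow started from $K_\tau$, with $\tau$ comparable to $(H^j_{\textrm{neck}})^{-1}$. After translating by $X_j$ and parabolically rescaling by $\lambda_j$, the spacetime track $\widetilde{\mathcal{K}}^j$ is squeezed between $\mathcal{D}_{\lambda_j}(\mathcal{L}^{\tau_j}-X_j)$ and $\mathcal{D}_{\lambda_j}(\mathcal{L}-X_j)$. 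The key quantitative input is precisely that $\Lambda=0$, i.e.\ $\lambda_j/H^j_{\textrm{neck}}\to 0$, which forces $\lambda_j\tau_j\to 0$; hence the temporal shift between $\mathcal{L}^{\tau_j}$ and $\mathcal{L}$, which is $\tau_j$ before rescaling and $\lambda_j^2\tau_j$ after parabolic rescaling, must be controlled. Here one has to be slightly careful: a parabolic rescaling by $\lambda_j$ scales time by $\lambda_j^2$, so I would instead note that the relevant statement is about Hausdorff distance of spacetime tracks, and use Proposition \ref{prop_levelset} directly: $d_{\mathrm H}(\mathcal{K}^j\cap\{t\le T\},\mathcal{L}\cap\{t\le T\})\le C(H^j_{\textrm{neck}})^{-1}$, so after rescaling by $\lambda_j$ the Hausdorff distance between $\widetilde{\mathcal{K}}^j$ and $\mathcal{D}_{\lambda_j}(\mathcal{L}-X_j)$ on any bounded spacetime region is at most $C\lambda_j(H^j_{\textrm{neck}})^{-1}=C\lambda_j/H^j_{\textrm{neck}}\to 0$ since $\Lambda=0$.

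Next I would extract a subsequential limit. Since the $\mathcal{K}^j$ all satisfy the one-sided minimization property (Proposition \ref{basic_prop}(b)) and the uniform density/curvature bounds there, the rescaled tracks $\widetilde{\mathcal{K}}^j$ are a precompact family of closed subsets of spacetime in the local Hausdorff topology, and a diagonal argument over an exhaustion by bounded spacetime regions produces a subsequence Hausdorff converging to a closed limit set $\mathcal{K}$. By the distance estimate of the previous paragraph, this limit coincides with the Hausdorff limit of $\mathcal{D}_{\lambda_j}(\mathcal{L}-X_j)$ along the same subsequence. Finally I would identify that limit: the spacetime track of the free boundary level set flow of $K_0$ has the property that its parabolic rescalings and translations subconverge (in the Hausdorff sense, using the avoidance principle and the fact that the level set flow is the maximal family avoiding smooth subsolutions, cf.\ \cite{EHIZ, GigaSato}) to a blowup limit of the free boundary level set flow — essentially by the definition of what it means to be such a blowup limit. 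Passing $X_j\in\partial\mathcal{K}^j$ to the limit, and using $\mathcal{L}^{\tau_j}\subseteq\mathcal{K}^j$ together with $\lambda_j\tau_j\to 0$ to see that the base point survives in the limit (so that $\mathcal{K}$ is nonempty and the blowup is nontrivial in the appropriate sense), one concludes that $\mathcal{K}$ is a blowup limit of the free boundary level set flow of $K_0$, as claimed.

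The main obstacle I anticipate is not the geometric measure theory compactness, which is standard given the hypotheses, but rather the bookkeeping around the surgery convention $K_{t}:=K_t^-$ at surgery times and the passage to the boundary of spacetime tracks: one must check that the Hausdorff limit of $\widetilde{\mathcal{K}}^j$ genuinely agrees with the Hausdorff limit of the rescaled level set flows even near surgery times, where $\mathcal{K}^j$ and $\mathcal{L}$ could a priori differ by more than $C(H^j_{\textrm{neck}})^{-1}$ on a short time interval. This is handled by the refinement in the proof of Proposition \ref{prop_levelset} (the statement that geodesic balls of radius $> \eta (H^j_{\textrm{neck}})^{-1}$ inside $K_t^-$ survive surgery, i.e.\ lie in $K_t^+$), which gives the two-sided control with the same scaling $(H^j_{\textrm{neck}})^{-1}$ uniformly across all times including surgery times, and this is exactly the scale that is killed by $\Lambda=0$. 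A secondary subtlety is ensuring the limit inherits the "level set flow of $K_0$" structure rather than being some arbitrary closed set — this follows because each rescaled level set flow is itself a level set flow (of $\mathcal{D}_{\lambda_j}(K_0-p_j)$ restricted appropriately), and the class of level set flows is closed under Hausdorff limits with respect to the avoidance characterization.
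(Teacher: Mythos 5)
Your proposal is correct and follows essentially the same approach as the paper: the key step in both is to apply Proposition~\ref{prop_levelset} to see that after parabolic rescaling the Hausdorff distance between $\widetilde{\mathcal{K}}^j$ and $\mathcal{D}_{\lambda_j}(\mathcal{L}-X_j)$ is $\leq C\lambda_j/H^j_{\textrm{neck}}\to 0$, so that the two sequences subconverge to the same limit, which is by definition a blowup limit of the free boundary level set flow. Your self-correction about the temporal shift $\lambda_j^2\tau_j$ correctly identifies that one must use the Hausdorff-distance statement rather than the time-shift alone, and your extra remarks on precompactness, the surgery-time convention, and the identification of the limit are elaborations of points the paper leaves implicit.
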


\begin{proof}Let $\mathcal{L}$ be the free boundary level set flow of $K_0$. By Proposition \ref{prop_levelset} (distance from free boundary level set flow) we have  $d_{\mathrm{H}}(\mathcal{K}^j\cap\{t\leq 2T\},\mathcal{L}\cap\{t\leq 2T\}) \leq C/H^j_{\text{neck}}$, hence
\begin{align}
d_{\mathrm{H}}(\widetilde{\mathcal{K}}^j\cap\{ t\leq \lambda_j^{2}T\},\mathcal{D}_{\lambda_j}(\mathcal{L}-X_j)\cap\{ t\leq \lambda_j^{2}T\}) \leq C\lambda_j/H^j_{\text{neck}}.
\end{align}
Since $\lim_{j \to \infty}\lambda_j/H^j_{\text{neck}}=0$ by assumption, it follows that $\widetilde{\mathcal{K}}^j$ and $\mathcal{D}_{\lambda_j}(\mathcal{L}-X_j)$ converge subsequentially in the Hausdorff sense to the same limit $\K$. This proves the proposition.
\end{proof}

Next, for $0<\Lambda<\infty$ the limiting objects will be either ancient Brakke $\delta$-flows in $\mathbb{R}^3$ as in \cite[Definition 4.2]{H_surgery} or ancient free boundary Brakke $\delta$-flows in $\mathbb{R}^3_+$ defined as follows:

\begin{definition}[{ancient free boundary Brakke $\delta$-flow}]\label{Def_Brakke_surgery}
An \emph{ancient free boundary Brakke $\delta$-flow in $\mathbb{R}^3_+$} is a pair $(\mathcal{K},\mathcal{M})$ consisting of a nested family of closed sets $\mathcal{K}=\{K_t\subset\mathbb{R}^3_+\}_{t \in (-\infty,T)}$ and a family of Radon measures $\mathcal{M}=\{\mu_t\}_{t \in (-\infty,T)}$ in $\mathbb{R}^3_+$, for which
there exists a constant $r_\sharp=r_\sharp(\K,\M)\in (0,\infty)$ and a disjoint collection of two-sided parabolic balls $P_i=B(p_i,50\Gamma r_{\sharp}) \times (t_i-\tfrac12 r_{\sharp}^2,t_i+\eps r_{\sharp}^2)$ such that:
\begin{enumerate}[(a)]
 \item For $t\in (t_i-\tfrac12 r_{\sharp}^2,t_i+\eps r_{\sharp}^2)$, we have $\mu_t\lfloor B(p_i,50\Gamma r_{\sharp})=\mathcal{H}^2 \lfloor \partial K_t\cap B(p_i,50\Gamma r_{\sharp})$, and for $t\neq t_i$ the sets $K_t\cap B(p_i,50\Gamma r_{\sharp})$ are smooth and evolve by free boundary mean curvature flow. At time $t_i$ a strong $\delta$-neck or strong half $\delta$-neck (see Definition \ref{def_strongneck}) of radius $r_i\in [r_\sharp/2,2r_\sharp]$ centered at $(p_i,t_i)$ is replaced by a pair of standard caps or standard half caps (see Definition \ref{def_replacestd}), and possibly some connected components of $K_{t_i}$ are discarded.\label{Def_Brakke_surgery1}
 \item Considering the somewhat smaller $P_i'=B(p_i,25\Gamma r_{\sharp}) \times (t_i-\tfrac14 r_{\sharp}^2,t_i+\tfrac12\eps r_{\sharp}^2)$, we have that $\mathcal{M}$ is a free boundary integral Brakke flow away from $\cup_i P_i'$, and $ \partial \mathcal{K}\setminus \cup_i P_i'=\textrm{spt}\mathcal{M}\setminus \cup_i P_i' $.
\end{enumerate}
\end{definition}

Here, we use the notion of free boundary integral Brakke flows from Edelen \cite{Edelen_brakke}, and in particular recall that the support consists of all space-time points where the (reflected) Gaussian density is at least one.

\begin{remark}[reflection]
In light of \cite[Proposition 4.6]{Edelen_brakke} any ancient free boundary Brakke $\delta$-flow in $\mathbb{R}^3_+$ can be reflected to a flow without boundary in $\mathbb{R}^3$, which satisfies all the axioms of an ancient Brakke $\delta$-flow in $\mathbb{R}^3$ with the caveat that at surgery times the closeness to a standard cap from item \eqref{def_surgery_delta_hat} of Definition \ref{def_replacestd} now only holds in the $C^1$ sense.
\end{remark}

Next, convergence of $\Lambda$-blowup sequences to Brakke $\delta$-flows is defined similarly as in \cite[Definition 4.2]{H_surgery}, and convergence to free boundary Brakke $\delta$-flows is defined as follows:

\begin{definition}[{convergence to free boundary Brakke $\delta$-flow}]\label{Def_brakke_conv_surg}
Given $0<\Lambda<\infty$, a $\Lambda$-blowup sequence $(\widetilde{\K}^j,\widetilde{\M}^j)$  \emph{converges to a free boundary Brakke $\delta$-flow $(\mathcal{K},\mathcal{M})$} if:
 \begin{enumerate}[(a)]
 \item The space-time tracks $\widetilde{\mathcal{K}}^j$ Hausdorff converge to the space-time track $\mathcal{K}$.
  \item If $P_i=B(p_i,50\Gamma r_{\sharp}) \times (t_i-\tfrac12 r_{\sharp}^2,t_i+\eps r_{\sharp}^2)$ is a surgery region of $(\mathcal{K},\mathcal{M})$ as in Definition \ref{Def_Brakke_surgery} (ancient free boundary Brakke $\delta$-flow), then for some $(p_i^j,t_i^j)\in \widetilde{\mathcal{K}}^j$ converging to $(p_i,t_i)$ the forwards and backwards portion $\{\tilde{K}^j_{t+t_i^j}-p^j_i\}_{t \in (0,\eps r_{\sharp}^2)}$ and $\{\tilde{K}^j_{t+t_i^j}-p^j_i\}_{t \in (-\frac12 r_{\sharp}^2,0]}$ converge smoothly to $\{K_{t+t_i}-p_i\}_{t \in (0,\eps r_{\sharp}^2)}$ and $\{K_{t+t_i}-p_i\}_{t \in (-\frac12 r_{\sharp}^2,0]}$, respectively, in $B(0,50\Gamma r_{\sharp})$.\label{Def_brakke_conv_surg2}
      \item $\widetilde{\M}^j$ converges to ${\M}$ in the sense of free boundary Brakke flows away from $\cup_i P_i'$.
 \end{enumerate}
\end{definition}

\begin{theorem}[hybrid compactness]\label{prop_compactness}
Any $\Lambda$-blowup sequence $(\widetilde{\K}^j,\widetilde{\M}^j)$ with $\Lambda \in (0,\infty)$ has a subsequence that converges to a limit $(\K,\M)$ that is either an ancient Brakke $\delta$-flow in $\mathbb{R}^3$ or an ancient free boundary Brakke $\delta$-flow in $\mathbb{R}^3_+$.
\end{theorem}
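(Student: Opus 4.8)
The plan is a two-scale compactness argument modeled on the interior case in \cite{H_surgery}: extract the space-time tracks $\widetilde{\K}^j$ by Blaschke selection, extract smooth limits on the surgery scale $\sim\Lambda$, and invoke Edelen's compactness theorem for free boundary integral Brakke flows \cite{Edelen_brakke} in the complement of the surgery regions. The interior versus boundary dichotomy in the conclusion reflects whether the basepoints run off to infinity from $\partial D$. To set things up, pass to a subsequence so that $\lambda_j\dist(p_j,\partial D)\to d_\infty\in[0,\infty]$. Since $\partial D$ is smooth with bounded geometry, after a rotation (and, when $d_\infty<\infty$, a bounded translation moving the nearest point of $\partial D$ onto $\partial\R^3_+$) the rescaled ambient domains $\mathcal{D}_{\lambda_j}(D-p_j)$ converge in $C^\infty_{\mathrm{loc}}$ either to $\R^3$ (if $d_\infty=\infty$), in which case we are in the interior setting of \cite{H_surgery} and the argument below specializes, or to the half-space $\R^3_+$ (if $d_\infty<\infty$), which is the case I focus on. Since $\lambda_j\to\infty$ we have $\lambda_j^2 t_j\to\infty$ unless $t_j\to 0$ along a subsequence, in which case $\widetilde{\K}^j$ is a blowup of the flow near the fixed smooth initial surface $\partial K_0$ and the smooth estimates there show the limit is a static plane or half-plane; so we may assume the limit is genuinely ancient.

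The next step is to locate the surgeries. Each $\widetilde{\K}^j$ carries surgery data, namely centers $(p_i^j,t_i^j)$ and radii $r_i^j$, and since every surgery neck has curvature $H^j_{\textrm{neck}}$ the radii $r_i^j$ are comparable to $\lambda_j/H^j_{\textrm{neck}}$, so $\lambda_j r_i^j$ converges to a quantity comparable to $\Lambda\in(0,\infty)$. By the spatial separation in Proposition \ref{basic_prop}(a), in any bounded space-time region the surgeries are $\gtrsim\delta^{-1}\Lambda$ apart, hence finite in number, and after a further subsequence they converge to finitely many $(p_i,t_i)$ with radii $r_i\in(0,\infty)$; this is exactly where the hypothesis $\Lambda\in(0,\infty)$ enters, as it prevents the surgeries from collapsing or blowing up under the rescaling. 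On a definite parabolic neighborhood of each $(p_i,t_i)$ the flow is smooth with uniform estimates: on the backward portion it contains a strong $\delta$-neck or strong half $\delta$-neck, and on the forward portion the post-surgery surface has uniformly bounded (half-)cap geometry by Definition \ref{def_replacestd}, so the free boundary flow persists smoothly for a time $\sim\eps r_i^2$ (using free boundary parabolic estimates, or after reflecting as in \cite{Edelen_brakke}). Setting $P_i,P_i'$ as in Definition \ref{Def_Brakke_surgery}, one gets smooth subsequential convergence on the surgery regions, and the closed conditions in Definition \ref{def_replacestd} (bounds on $\nabla^\ell A$, the inequality for $\lambda_1/H$, and closeness to a pair of standard or standard half caps) pass to the limit, yielding the surgery structure demanded in Definitions \ref{Def_Brakke_surgery} and \ref{Def_brakke_conv_surg}.

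Away from $\cup_i P_i'$ each $\widetilde{\M}^j$ is a free boundary integral Brakke flow, and uniform local mass bounds follow directly from one-sided minimization: taking $K'$ equal to $\tilde K^j_{t_0}$ inside a ball $B_R(x)$ and equal to $\tilde K^j_t$ outside, Proposition \ref{basic_prop}(b) gives $|\partial\tilde K^j_t\cap B_R(x)|\le|\partial\tilde K^j_{t_0}\cap B_R(x)|$ for $t_0<t$, and choosing $t_0$ at the bottom of the time interval, where $\tilde K^j_{t_0}$ is a large rescaling of the fixed smooth surface $\partial K_0$, bounds the right side by $CR^2$ uniformly. Edelen's compactness theorem \cite{Edelen_brakke} then produces a subsequential limit $\M$ that is a free boundary integral Brakke flow away from $\cup_i P_i'$. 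Simultaneously, Blaschke selection gives a subsequential Hausdorff limit $\K$ of the tracks; nestedness of time-slices and left-continuity at surgery times pass to the limit, so $\K$ is a nested family of closed sets. Finally one verifies $\partial\K\setminus\cup_i P_i'=\supp\M\setminus\cup_i P_i'$: the inclusion $\supp\M\subseteq\partial\K$ is immediate since $\tilde\mu^j_t=\mathcal{H}^2\lfloor\partial\tilde K^j_t$, while the reverse inclusion away from surgeries — that the surface does not vanish into the interior in the limit — follows from the one-sided minimization as in the mean-convex theory of \cite{White_size,HK,EHIZ}. Assembling these ingredients shows that $(\K,\M)$ is an ancient Brakke $\delta$-flow in $\R^3$ or an ancient free boundary Brakke $\delta$-flow in $\R^3_+$, and that $(\widetilde{\K}^j,\widetilde{\M}^j)$ converges to it in the required sense.

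I expect the main obstacle to be this last point: reconciling the weak convergence outside the surgery regions with the identification of $\supp\M$ with the topological boundary of the Hausdorff limit — ruling out a sudden loss of mass into the interior — together with the bookkeeping needed to make the hybrid object consistent, i.e. smooth on the surgery regions $P_i$ and a genuine mean-convex free boundary Brakke flow outside the slightly smaller $P_i'$, with the two descriptions agreeing on the overlap. In the free boundary case one must moreover check that both the parabolic estimates near the caps and the no-vanishing argument remain valid up to the boundary $\partial\R^3_+$, which is where Edelen's theory of free boundary integral Brakke flows and its reflection principle are used.
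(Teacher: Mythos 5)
Your proposal follows essentially the same two-scale strategy as the paper: split into the interior case ($\lambda_j d(p_j,\partial D)\to\infty$, handled by the argument from \cite{H_surgery}) versus the boundary case; locate and count the surgeries using $\Lambda>0$ and the spatial separation from Proposition \ref{basic_prop}(a); get smooth subsequential convergence on the parabolic surgery neighborhoods $P_i$ from local regularity and the cap bounds in Definition \ref{def_replacestd}; invoke Edelen's compactness for free boundary integral Brakke flows away from $\cup_i P_i'$; glue the two descriptions; and identify $\supp\M$ with $\partial\K$ off the surgery regions. The one cosmetic divergence is in the last step: the paper justifies $\partial\K\setminus\cup_i P_i'=\supp\M\setminus\cup_i P_i'$ via upper semicontinuity of Edelen's (reflected) Gaussian density, recalling that the support is by definition the set of density $\geq 1$ points, whereas you invoke one-sided minimization \`a la \cite{White_size}; both are valid routes, and the paper's choice is the more direct one in the Brakke-flow framework since it bypasses any currents argument.
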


\begin{proof} If $\lambda_j d(p_j,\partial D)\to \infty$, then the proof of \cite[Theorem 4.4]{H_surgery} applies, yielding convergence to an ancient Brakke $\delta$-flow in $\mathbb{R}^3$. Hence, after shifting the base-points by controlled rescaled distance we can assume from now on that $p_j\in \partial D$. We can also assume that $\widetilde{\mathcal{K}}^j$ Hausdorff converges to a limit $\mathcal{K}=\{K_t\subset\mathbb{R}^3_+\}_{t \in (-\infty,T)}$, which is a nested family of closed sets.

Since $\Lambda>0$, for any $R<\infty$ the number $N_R^j$ of surgery centers of  $\widetilde{\mathcal{K}}^j$ in the two-sided parabolic ball $P(0,R)=B(0,R)\times (-R^2,R^2)$ is uniformly bounded. After passing to a subsequence, we can assume that $N_R^j=N_R$ is independent of $j$. Moreover, denoting by $\{(p^j_i,t^j_i)\}_{i=1}^{N_R}$ the surgery centers of  $\widetilde{\mathcal{K}}^j$ in $P(0,R)$, and denoting by $r^j_i \in [\Lambda_j/2,2\Lambda_j]$ the neck radii, where $\Lambda_j := \lambda_j/H^j_{\text{neck}}$, after passing to a further subsequence we can assume that
\begin{align}\label{limit_data}
(p^j_i,t^j_i) \to (p_i,t_i)\qquad \textrm{and} \qquad
 r^j_i \to r_i \in [\Lambda/2,2\Lambda].
\end{align}
Set $r_\sharp:=\Lambda$, and recall that $\eps>0$ is a small fixed constant by Convention \ref{conv}. Arguing similarly as in the proof of \cite[Claim 4.5]{H_surgery}, where we now use the local regularity theorem for free boundary flows from \cite{Edelen_brakke}, for each $P_i=B(p_i,50\Gamma r_{\sharp}) \times (t_i-\tfrac12 r_{\sharp}^2,t_i+\eps r_{\sharp}^2)$ we get
\begin{equation}\label{curv_bounds_par}
\limsup_{j\to\infty} \sup_{ \partial \widetilde{\mathcal{K}}^j\cap P_i}|\nabla^\ell A|<\infty\qquad (\ell=0,1,2,\ldots).
\end{equation}
Hence, the convergence in the two-sided parabolic balls $P_i$ is smooth in the sense of Definition \ref{Def_brakke_conv_surg}(\ref{Def_brakke_conv_surg2}). Moreover, away from $\cup_i P_i'$ we can pass to a subsequential weak limit using the compactness theorem for free boundary integral Brakke flows from \cite{Edelen_brakke}. In the surgery regions we define our measure by declaring that $\mu_{t}(A):=\mathcal{H}^2( \partial K_{t} \cap A)$ for $A\subseteq B(p_i,50\Gamma r_{\sharp})$ and $t\in (t_i-\tfrac12 r_{\sharp}^2,t_i+\eps r_{\sharp}^2)$, and observe that this is consistent in the overlap regions.

We now consider a sequence $R_j \to \infty$, and pass to a diagonal subsequence of the above to obtain a global limit $(\mathcal{K},\mathcal{M})$. Observe that our limit satisfies all the properties listed in Definition \ref{Def_Brakke_surgery}\eqref{Def_Brakke_surgery1}, and that $\mathcal{M}$ is a free boundary integral Brakke flow away from $\cup_i P_i'$. Finally, arguing similarly as in the last paragraph of the proof \cite[Theorem 4.4]{H_surgery}, where we now use the (reflected) Gaussian density from \cite{Edelen_brakke}, we see that the support of $\mathcal{M}$ agrees with $\partial \mathcal{K}$ away from $\cup_i P_i'$. This finishes the proof of the theorem.
\end{proof}

Finally, let us deal with the case $\Lambda=\infty$:

\begin{proposition}[large blowups]\label{prop_LI}
 If $\Lambda=\infty$, then $(\widetilde{\K}^j,\widetilde{\M}^j)$ subsequentially converges either (a) in the Hausdorff and Brakke sense to a mean-convex flow in $\mathbb{R}^3$ or a mean-convex free boundary flow in $\mathbb{R}^3_+$ or (b) smoothly to a static or backwards/forwards quasistatic multiplicity-one plane in $\mathbb{R}^3$ or multiplicity-one free boundary half plane in $\mathbb{R}^3_+$.
 \end{proposition}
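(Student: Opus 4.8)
The plan is to adapt the proof of the corresponding large-blowup proposition in the non-free-boundary setting \cite{H_surgery}, the new ingredients being Edelen's local regularity and compactness theory for free boundary integral Brakke flows \cite{Edelen_brakke}, together with the scale-invariant curvature ratio bound $|A|\le CH$ from Proposition \ref{basic_prop} and the free-boundary convexity estimates of \cite{Edelen_convexity,EHIZ}. First, passing to a subsequence we may assume $\lambda_j d(p_j,\partial D)$ converges in $[0,\infty]$. If the limit is $\infty$ the free boundary is pushed off to infinity and the blowup takes place entirely in the interior, so the argument of \cite{H_surgery} applies verbatim and gives either Hausdorff and Brakke convergence to a mean-convex flow in $\mathbb{R}^3$ or smooth convergence to a static or backwards/forwards quasistatic multiplicity-one plane in $\mathbb{R}^3$. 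We may therefore assume the limit is finite; as in the proof of Theorem \ref{prop_compactness} we shift the base points $X_j$ by the controlled rescaled amount $\lambda_j d(p_j,\partial D)$ so that afterwards $p_j\in\partial D$, and, pretending $D\subset\mathbb{R}^3$, the rescaled domains $\mathcal{D}_{\lambda_j}(D-p_j)$ exhaust $\mathbb{R}^3_+$ with boundaries converging in $C^\infty_{\mathrm{loc}}$ to $\partial\mathbb{R}^3_+$. After a further subsequence we may also assume $\widetilde{\mathcal{K}}^j$ Hausdorff converges to a nested family of closed sets $\mathcal{K}=\{K_t\subset\mathbb{R}^3_+\}$.

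The point specific to $\Lambda=\infty$ is that after rescaling the surgeries occur at scale $r_\sharp^j\sim\lambda_j/H^j_{\mathrm{neck}}\to\infty$. Hence, fixing $R<\infty$, for $j$ large every surgery region of $\widetilde{\mathcal{K}}^j$ meeting the parabolic ball $P(0,R)$ is a strong $\delta$-neck or strong half $\delta$-neck of radius $r\gtrsim\lambda_j/H^j_{\mathrm{neck}}\to\infty$ whose cap (or half-cap) modification, taking place at distance of order $\Gamma r\to\infty$ from the center, is invisible inside $B(0,R)$, and distinct surgery regions cannot both influence $B(0,R)$ by the spatial separation in Proposition \ref{basic_prop}(a). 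Consequently, on $P(0,R)$ the flow $\widetilde{\mathcal{K}}^j$ coincides, for $j$ large, with a smooth free boundary strictly mean-convex mean curvature flow, except possibly across a single time at which either a connected component that near the origin is $C^{\lfloor 1/\delta\rfloor}$-close to a static half-space (adapted to the domains $\mathcal{D}_{\lambda_j}(D-p_j)$) is discarded, or the flow has its finite initial time. In particular the mean-convexity $H>0$ and the scale-invariant bound $|A|\le CH$ from Proposition \ref{basic_prop} hold uniformly for $\widetilde{\mathcal{K}}^j$ on $P(0,R)$.

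We then pass to a subsequential limit on each $P(0,R)$ and diagonalize over $R\to\infty$, using the local regularity and compactness theorems for free boundary integral Brakke flows \cite{Edelen_brakke} (and \cite{HK} away from $\partial\mathbb{R}^3_+$) together with the $|A|\le CH$ bound. If $\limsup_j \sup_{\partial\widetilde{\mathcal{K}}^j\cap P(0,R)}H>0$ for some $R$, the limit is a nontrivial mean-convex flow in $\mathbb{R}^3$, resp.\ mean-convex free boundary flow in $\mathbb{R}^3_+$, with Hausdorff and Brakke convergence (smooth wherever the curvature stays bounded); this is alternative (a). If instead $\sup_{\partial\widetilde{\mathcal{K}}^j\cap P(0,R)}H\to 0$ for every $R$, then $|A|\to 0$ locally uniformly by the curvature ratio bound, so the convergence is smooth; by the one-sided minimization of Proposition \ref{basic_prop}(b), which passes to the limit and rules out a collapsed slab, the limit has multiplicity one, and, imposing orthogonality at $\partial\mathbb{R}^3_+$ in the boundary case, it is a static multiplicity-one plane, resp.\ free boundary half-plane, or — reflecting the possible discarding of a near-planar component or the presence of a finite initial time noted above — a backwards or forwards quasistatic such (half-)plane. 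This is alternative (b).

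The main obstacle is making the ``surgeries are trivial at bounded scale'' reduction precise near the free boundary: one must verify that a strong half $\delta$-neck of radius $r\to\infty$ really is, at unit scale and uniformly in $j$, $\delta$-close to a static half-space in coordinates adapted to the converging domains $\mathcal{D}_{\lambda_j}(D-p_j)$, that no two surgery regions interact in a fixed ball, and that the free boundary local regularity and compactness of \cite{Edelen_brakke} apply up to $\partial\mathbb{R}^3_+$ with constants independent of $j$. Once this is in place, the remaining case analysis and passage to the limit are routine adaptations of \cite{H_surgery}.
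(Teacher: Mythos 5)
Your proposal matches the paper's strategy: dichotomize on whether surgery regions meet a fixed parabolic ball, use the local regularity theorem at the rescaled neck scale $r_j\sim\lambda_j/H^j_{\mathrm{neck}}\to\infty$ to force $|\nabla^\ell A|\to 0$ in the surgery case (giving conclusion (b)), and apply Brakke/Hausdorff compactness together with the $|A|\le CH$ bound otherwise (giving conclusion (a)). The one imprecision is your claim that the cap modification is ``invisible inside $B(0,R)$'': when a neck center lies near the origin, the surgery visibly alters the domain at bounded scale (the central portion of the neck is removed, and the cap transition zone may also enter $B(0,R)$), so the flow does not simply ``coincide with a smooth MCF except for a discard''; the paper sidesteps this case analysis by directly deriving $\limsup_j\sup_{\partial\widetilde{\mathcal{K}}^j\cap P(0,\rho)}|\nabla^\ell A|=0$ via the argument used for \eqref{curv_bounds_par}, after which conclusion (b) follows uniformly in all sub-cases.
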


\begin{proof}
If for every $R<\infty$ the two-sided parabolic ball $P(0,R)$ does not contain points modified by surgeries for infinitely many $j$, then the conclusion (a) holds. Assume now that there is some $R<\infty$, such that $P(0,R)$ contains points modified by by surgeries for all $j$. Since $\Lambda=\infty$, arguing similarly as in the proof of \eqref{curv_bounds_par} we see that for each $\rho<\infty$ we have
\begin{equation}
\limsup_{j\to\infty} \sup_{ \partial \widetilde{\mathcal{K}}^j\cap P(0,\rho)}|\nabla^\ell A|=0\qquad (\ell=0,1,2,\ldots).
\end{equation}
It follows that conclusion (b) holds. This finishes the proof of the proposition.
\end{proof}

A large portion of this paper will deal with analyzing the limits constructed above:

\begin{definition}[generalized limit flow]\label{def_genlim}
A \emph{$\Lambda$-generalized limit flow} is any limit $(\K,\M)$ provided by Theorem \ref{prop_compactness} (hybrid compactness) or Proposition \ref{prop_LI} (large blowups).
\end{definition}

To conclude this section, let us observe that generalized limit flows inherit the one-sided minimization property in the following sense:

\begin{corollary}[one-sided minimization for generalized limit flows]\label{prop_onesidedmin}
Let $(\mathcal{K},\mathcal{M})$ be a $\Lambda$-generalized limit flow (see Definition \ref{def_genlim}). Suppose $\gamma$ is a $1$-cycle that at some time $t$ bounds a $2$-chain in $\partial K_t$, and $\mathcal{H}^2(\mathrm{sing}\, \partial K_t)=0$.
Then, there exists a $2$-chain $\Sigma$ supported in $K_t$ with $\partial \Sigma = \gamma$, such that $\mathcal{H}^2(\Sigma) \leq \mathcal{H}^2(\Sigma')$ for any $2$-chain $\Sigma'$ bounded by $\gamma$.
\end{corollary}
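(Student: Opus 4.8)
The plan is to transfer the one-sided minimization property of the approximating flows, namely Proposition \ref{basic_prop}(b), to the generalized limit $(\mathcal{K},\mathcal{M})$ by a diagonal argument, carefully accounting for the way the limit was constructed in Theorem \ref{prop_compactness} (and Proposition \ref{prop_LI}). Concretely, $(\mathcal{K},\mathcal{M})$ arises as a Hausdorff and Brakke limit of a $\Lambda$-blowup sequence $\widetilde{\mathcal{K}}^j = \mathcal{D}_{\lambda_j}(\mathcal{K}^j - X_j)$, where each $\mathcal{K}^j$ is a free boundary $(\delta,\mathcal{H}^j)$-flow; since parabolic rescaling and translation preserve the one-sided minimization property of Proposition \ref{basic_prop}(b) (only the ball and the time interval are transformed), each $\widetilde{\mathcal{K}}^j$ satisfies: for every ball $B$, $|\partial \tilde{K}^j_{t_1} \cap B| \leq |\partial K' \cap B|$ for all $K'$ agreeing with $\tilde{K}^j_{t_1}$ outside $B$ with $\tilde{K}^j_{t_1} \subseteq K' \subseteq \tilde{K}^j_{t_0}$ for some $t_0 < t_1$.

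The key steps, in order, would be: (1) Fix the time $t$ and the $1$-cycle $\gamma$ bounding a $2$-chain $\Sigma_0$ in $\partial K_t$, and assume $\mathcal{H}^2(\sing\,\partial K_t) = 0$. Let $\Sigma'$ be any competitor $2$-chain with $\partial\Sigma' = \gamma$; since $\gamma$ is compact we may take $\Sigma'$ supported in a fixed large ball $B$ and, enlarging $B$, assume $\gamma$ and a neighborhood of its bounding chain in $\partial K_t$ lie well inside $B$. (2) Build a comparison domain $K' \subseteq K_t$ from $\Sigma'$ by surgering $K_t$ along $\Sigma_0 \cup \Sigma'$: cut $K_t$ along the closed surface formed by $\Sigma_0$ and $\Sigma'$ and keep the piece contained in $K_t$ whose boundary is $\Sigma'$ together with $\partial K_t \setminus \Sigma_0$; this uses $\mathcal{H}^2(\sing\,\partial K_t)=0$ to ensure the cut-and-paste is valid and $|\partial K'| = |\partial K_t| - \mathcal{H}^2(\Sigma_0) + \mathcal{H}^2(\Sigma')$, so it suffices to show $\mathcal{H}^2(\Sigma_0) \leq \mathcal{H}^2(\Sigma')$ reduces to $|\partial K'_t \cap B| \geq |\partial K_t \cap B|$ for such $K'$. (3) Approximate $K'$ by domains $K'^j$ that agree with $\tilde{K}^j_t$ outside $B$, are squeezed $\tilde{K}^j_t \subseteq K'^j \subseteq \tilde{K}^j_{t_0}$, and have $|\partial K'^j \cap B| \to |\partial K' \cap B|$: here use Hausdorff convergence $\tilde{K}^j_t \to K_t$ together with the fact that $K' \subseteq K_t$ can be perturbed slightly inward to $K'_\eta$ with $\mathcal{H}^2(\partial K'_\eta) \leq \mathcal{H}^2(\partial K') + o_\eta(1)$ and $K'_\eta$ strictly inside $K_t$, so that $\tilde{K}^j_t \subseteq K'_\eta$ for $j$ large, and the lower semicontinuity/upper semicontinuity of area under these modifications is controlled. (4) Apply Proposition \ref{basic_prop}(b) to each $\widetilde{\mathcal{K}}^j$ with competitor $K'^j$ to get $|\partial \tilde{K}^j_t \cap B| \leq |\partial K'^j \cap B|$; then pass to the limit using Brakke convergence $\widetilde{\mathcal{M}}^j \to \mathcal{M}$ (which gives $\liminf_j |\partial \tilde{K}^j_t \cap B| \geq \mu_t(B) = \mathcal{H}^2(\partial K_t \cap B)$, using that away from the finitely many surgery regions $\supp\mathcal{M} = \partial\mathcal{K}$ and the smooth convergence near surgeries) to conclude $\mathcal{H}^2(\partial K_t \cap B) \leq |\partial K' \cap B|$, which rearranges to $\mathcal{H}^2(\Sigma_0) \leq \mathcal{H}^2(\Sigma')$. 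Finally, take $\Sigma = \Sigma_0$.

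The main obstacle I expect is step (3) and the interplay with step (4): one must produce the approximating competitors $K'^j$ in a way that is simultaneously compatible with the Hausdorff convergence (for the nesting constraints $\tilde{K}^j_t \subseteq K'^j \subseteq \tilde{K}^j_{t_0}$) and with the Brakke/measure convergence (so that the areas converge correctly and no mass is lost or created at $\partial B$ or near the surgery regions). The subtlety is that $\widetilde{\mathcal{M}}^j \to \mathcal{M}$ only in the Brakke sense away from the surgery regions $P_i'$, and that at surgery times the measure $\mu_t$ is declared by hand as $\mathcal{H}^2\lfloor \partial K_t$; one should choose the ball $B$ and, if necessary, slightly move $t$ to a non-surgery time of the limit so that the measure-theoretic boundary of $K_t$ in $B$ is genuinely the support of $\mathcal{M}$ there, and then the standard lower-semicontinuity of area under Hausdorff/Brakke convergence (as in \cite[Section 3]{White_size}, already invoked in Proposition \ref{basic_prop}) applies verbatim. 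A secondary technical point is ensuring the cut-and-paste in step (2) is rigorous at the level of integral currents or Caccioppoli sets given only $\mathcal{H}^2(\sing\,\partial K_t)=0$; this is where that hypothesis is used, and it should follow from a standard coarea/slicing argument, so I would treat it as routine.
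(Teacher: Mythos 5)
There is a genuine gap here, and in fact the approach as written cannot work because the direction of the comparison is backwards. Proposition \ref{basic_prop}(b) is a statement about \emph{outward} competitors only: it gives $|\partial K_{t_1}\cap B|\leq|\partial K'\cap B|$ for $K'$ sandwiched as $K_{t_1}\subseteq K'\subseteq K_{t_0}$ with $t_0<t_1$, i.e.\ for enlargements of $K_{t_1}$. In your step (2) you construct a comparison domain $K'\subseteq K_t$ by cutting \emph{into} $K_t$ along $\Sigma'$, which is a competitor on the wrong side, and Proposition \ref{basic_prop}(b) says nothing about such $K'$. Moreover, the inequality you are aiming for, $\mathcal{H}^2(\Sigma_0)\leq\mathcal{H}^2(\Sigma')$, is false in general: the $2$-chain $\Sigma_0\subset\partial K_t$ has positive mean curvature whenever the flow is actually moving, so a minimal disk spanning $\gamma$ inside $K_t$ has strictly less area than $\Sigma_0$. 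This is exactly why the corollary only asserts the \emph{existence} of some minimizing $\Sigma$ supported in $K_t$, not that $\Sigma_0$ itself minimizes; ``finally, take $\Sigma=\Sigma_0$'' cannot be the endgame. The same sign error propagates to step (3): if $K'_\eta$ lies strictly inside $K_t$, then Hausdorff convergence $\widetilde{K}^j_t\to K_t$ forces $\widetilde{K}^j_t\not\subseteq K'_\eta$ for large $j$, which is the opposite of what you claim and is incompatible with the nesting $\widetilde{K}^j_t\subseteq K'^j$ that step (4) requires when $K'^j\to K'\subsetneq K_t$.

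The paper's proof is essentially a pointer to the argument of White \cite[Theorem 6.1]{White_size}, which runs the other way around. One first solves the Plateau problem for $\gamma$ in the ambient space with no constraint, obtaining a mass-minimizing $2$-chain $\Sigma$. If $\Sigma\not\subseteq K_t$, one replaces the portion of $\Sigma$ lying outside $K_t$ by the corresponding portion of $\partial K_t$: setting $K''=K_t\cup W$, where $W$ is the region enclosed between $\Sigma\setminus K_t$ and $\partial K_t$, one has $K_t\subseteq K''\subseteq K_{t_0}$ for $t_0$ sufficiently negative (available because the $\Lambda$-generalized limit flow is ancient), and the one-sided minimization — passed from the approximators $\widetilde{\K}^j$ (Proposition \ref{basic_prop}(b)) to the limit via the Hausdorff and Brakke convergence, exactly as in \cite[Section 6]{White_size} and using $\mathcal{H}^2(\sing\partial K_t)=0$ to make the cut-and-paste rigorous — shows that the replacement costs no area. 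Since $\Sigma$ was already a global minimizer, so is the replacement, and it is supported in $K_t$. Your outline does correctly flag the need to pass the approximator minimization to the limit, to handle the finitely many surgery regions (where one has smooth convergence) separately from the Brakke region, and to use the smallness of the singular set; but the comparison domain must be built \emph{outward} and the conclusion must be about the Plateau minimizer, not about $\Sigma_0$.
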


\begin{proof} This follows from the one-sided minimization for smooth flows (see Proposition \ref{basic_prop}), via the same argument as in the the proof of \cite[Theorem 6.1]{White_size}.
\end{proof}

\bigskip

\section{Excluding microscopic surgeries}

The goal of this section is to prove the following theorem.

\begin{theorem}[no microscopic surgeries]\label{thm_no_microscopic}
Suppose $\widetilde{\K}^j$ is a $0$-blowup sequence (with $\delta\leq\bar{\delta}$ small enough) that Hausdorff converges in space-time to a (quasi-)static multiplicity-one plane or half plane. Then, for any $R<\infty$ there exists a $j_0=j_0(R)<\infty$, such that for all $j\geq j_0$ the two-sided parabolic ball $P(0,R)$ contains no points modified by surgeries.
\end{theorem}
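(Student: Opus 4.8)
The plan is to argue by contradiction and extract quantitative consequences of having a surgery in $P(0,R)$ while the flow is converging to a (quasi-)static multiplicity-one plane or half plane. So suppose that after passing to a subsequence there is, for every $j$, a surgery neck or half neck with center $(p^j, t^j) \in \widetilde{\mathcal{K}}^j \cap P(0,R)$ and rescaled neck radius $r^j := \lambda_j / H^j_{\mathrm{neck}}$. Since $\Lambda = 0$ we have $r^j \to 0$, so the neck is shrinking to a point in the blowup picture. The first step is to set up a \emph{second}, auxiliary rescaling: blow up $\widetilde{\mathcal{K}}^j$ around $(p^j, t^j)$ by the factor $1/r^j = H^j_{\mathrm{neck}} / \lambda_j$. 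By the definition of a strong $\delta$-neck (Definition \ref{def_strongneck}) and Definition \ref{def_replacestd}, this doubly-rescaled flow is, on the two-sided parabolic ball of size $1/\delta$, $\delta$-close in $C^{\lfloor 1/\delta \rfloor}$ to the evolution of a solid round cylinder (or half cylinder) of radius $1$ immediately before the surgery time, and after the surgery time it is $\delta'(\delta)$-close to a pair of standard caps (or half caps). In particular the doubly-rescaled flow passes, near its own origin, through a configuration with $\mathcal{H}^2$-density uniformly bounded below by something strictly bigger than $1$ (roughly $2\pi \cdot (2\Gamma)$, or half that in the half-cylinder case), coming from a cylindrical region of definite length and radius $\approx 1$.

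The second step is to translate this density lower bound, via Edelen's monotonicity formula for free boundary flows from \cite{Edelen_brakke} (and the ordinary Huisken monotonicity formula in the interior case), into a contradiction with convergence to a multiplicity-one plane or half plane. The point is that the doubly-rescaled flows carry a fixed amount of area concentrated at a definite spatial scale around their origins; monotonicity propagates this into a Gaussian density ratio, at some controlled scale, that is bounded below by $1 + \beta_0$ for a universal $\beta_0 > 0$ (using that the cylinder has density strictly above that of a plane). This lower bound is scale-invariant and survives the passage from the doubly-rescaled flow back to $\widetilde{\mathcal{K}}^j$: concretely, the (reflected) Gaussian density of $\widetilde{\mathcal{K}}^j$ centered at a point near $(p^j,t^j)$, evaluated at a scale comparable to $r^j$, is at least $1 + \beta_0$. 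The key quantitative input is that the $C^{\lfloor 1/\delta\rfloor}$-closeness in the neck — together with the standard cap being a fixed smooth convex domain that is cylindrical outside a ball of radius $2$ — gives the area lower bound with a constant that does \emph{not} degenerate as $\delta \to 0$; one only needs $\bar\delta$ small enough that the error in the monotone quantity is $< \beta_0 / 2$. This is where Definition \ref{def_MCF_surgery}(b) is used: the neck has curvature exactly $H^j_{\mathrm{neck}}$, so its rescaled radius is genuinely $\lambda_j/H^j_{\mathrm{neck}} = r^j$ and the second blowup is the correct normalization.

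The third step is to take the limit $j \to \infty$ in this inequality. Since $r^j \to 0$ and $(p^j, t^j) \to (p_\infty, t_\infty) \in \overline{P(0,R)}$ (after a further subsequence), lower semicontinuity of the (reflected) Gaussian density under Hausdorff/Brakke convergence — Proposition \ref{prop_L0} identifies the Hausdorff limit with a blowup of the free boundary level set flow, which here is the (quasi-)static multiplicity-one plane or half plane — forces the Gaussian density of the limiting plane or half plane, centered at $(p_\infty, t_\infty)$ at scale $0^+$, to be at least $1 + \beta_0$. But a multiplicity-one plane has Gaussian density exactly $1$, and a multiplicity-one free boundary half plane has reflected Gaussian density exactly $1$ as well (the reflection doubles both the half plane and the Gaussian weight's mass in a matching way, by \cite[Proposition 4.6]{Edelen_brakke}). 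This contradiction establishes the theorem, and it simultaneously pins down how small $\bar\delta$ must be chosen (only in terms of $\beta_0$, hence universal).

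The main obstacle, and the step deserving the most care, is the second one: making the area/density lower bound genuinely \emph{uniform in $\delta$} and correctly handling the half-space case, where one must work with Edelen's \emph{reflected} Gaussian density rather than Huisken's, and verify that a surgery half neck really does produce a reflected density bounded below by $1 + \beta_0$ (the reflected standard half cap is a standard cap, so this reduces to the interior computation, but the bookkeeping of the reflection across $\partial D$ — and the fact that post-surgery Definition \ref{def_replacestd}\eqref{def_surgery_delta_hat} closeness only holds in $C^1$ after reflection, as noted in the Remark following Definition \ref{Def_Brakke_surgery} — needs to be done carefully). A secondary technical point is ensuring the monotonicity formula is applicable on the relevant time interval: one needs the doubly-rescaled flow to be a genuine (free boundary integral) Brakke flow, respectively smooth free boundary flow, on a parabolic neighborhood large enough to run the monotonicity argument, which follows from the $\delta$-neck and standard-cap structure plus the local regularity theorem, exactly as invoked in the proof of Theorem \ref{prop_compactness}.
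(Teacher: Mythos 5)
Your overall strategy---extracting a Gaussian density lower bound strictly above one at the neck scale from the strong ($\delta$-)neck structure, propagating it outward via Edelen's reflected monotonicity formula, and contradicting convergence to a multiplicity-one plane or half plane---is the same as the paper's, which runs precisely this argument in the boundary case (and defers the interior case to \cite{H_surgery}).

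However, Step 3 of your outline has a genuine gap. You claim that lower semicontinuity of the (reflected) Gaussian density under Hausdorff/Brakke convergence forces the density of the limiting plane or half plane ``at scale $0^+$'' to be at least $1+\beta_0$. That does not follow: you only have the density lower bound for the approximators at their own neck scales $\tau\sim (r^j)^2\to 0$, while lower semicontinuity operates at a \emph{fixed} comparison scale. To turn your neck-scale lower bound into a lower bound at a fixed scale $\tau_0$---where closeness of $\widetilde{\K}^j$ to the plane forces the density to be near $1$ and yields the contradiction---you must run almost-monotonicity of the reflected quantity over the entire scale range from $(r^j)^2$ to $\tau_0$. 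On that range there may be many \emph{other} surgeries in $\widetilde{\K}^j$, each contributing a positive error to the would-be monotone quantity, and the missing step is to show the cumulative error stays below, say, $\beta_0/2$ for $j$ large. This is exactly what the paper's proof supplies, arguing as in \cite[Claim 2.17]{HK} that the total surgery error accumulated between $\tau=(r^j)^2$ and $\tau=\eps\rho^2$ is less than $1/100$. Your final paragraph flags a different and lesser concern (whether the doubly-rescaled flow is locally a genuine free boundary Brakke flow near one neck), which is not the real obstruction; the obstruction is summing the monotonicity defects over all intermediate surgeries. A secondary imprecision: the quantity you want bounded below at the neck scale is the Gaussian (reflected) density, not the $\mathcal{H}^2$ area; a unit cylinder has Gaussian density $\sqrt{2\pi/e}\approx 1.52$, which is why the paper uses the threshold $3/2$, and the figure $2\pi\cdot 2\Gamma$ coming from lateral area is not the relevant normalization.
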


\begin{proof} Observing that the argument from the proof of \cite[Theorem 4.4]{H_surgery} already rules out microscopic surgeries in the interior, it suffices to rule out microscopic surgeries at the boundary. Specifically, pretending for ease of notation that $D\subset\mathbb{R}^3_+$ with $0\in\partial D$, suppose towards a contradiction that there is a $0$-blowup sequence $\widetilde{\K}^j$ that Hausdorff converges to $\{x_1\geq 0\}\cap \{x_3\geq 0\} \times (-\infty,0]$, but such that there are surgeries at half necks of radius $r_j\to 0$ centered at $(0,0)$. Set $t_0^j=r_j^2/2$, and instead of Huisken's quantity $\Theta$ from \cite{Huisken_monotonicity} consider Edelen's quantity $\Theta_R$ from \cite[Definition 5.1.1]{Edelen_brakke}, centered at $X_0^j=(0,t_0^j)$, namely
\begin{equation}
 \Theta_R\big(\widetilde{\mathcal{M}}^j,X_0^j,\tau\big)=\int_{\partial\tilde{K}^j_{t_0^j-\tau}} \left( \theta(x,\tau)+\theta(R_jx,\tau)\right)\, dA(x),
\end{equation}
where $\theta$ denotes the truncated Gaussian kernel defined by
\begin{equation}
\theta(x,\tau)=\frac{1}{4\pi \tau}\exp\left(-\frac{|x|^2}{4\tau}\right)\left( 1 -\left(\frac{\rho^2}{\tau}\right)^{3/4} \frac{|x|^2-\alpha\tau}{\rho^2} \right)_{+}^4,
\end{equation}
for suitable choice of $\rho>0$ and $\alpha>0$, and $R_jx$ denotes the reflection of $x$ across $\partial(\lambda_j D)$.
For small backwards time, say $\tau_j=r_j^2$, we are $\delta$-close to a half neck, and thus get
\begin{equation}\label{lowerdensity}
\liminf_{j\to\infty} \Theta_R\big(\widetilde{\mathcal{M}}^j,X_0^j,r_j^2\big)>3/2.
\end{equation}
Now, by \cite[Theorem 5.5]{Edelen_brakke} the function $\tau\mapsto \Theta_R\big(\widetilde{\mathcal{M}}^j,X_0^j,\tau\big)$ is almost monotone if there are no surgeries. Note also that discarding connected components has the good sign. Finally, arguing similarly as in the proof of \cite[Claim 2.17]{HK} we see that the cumulative error in Edelen's monotonicity inequality due to surgeries between $\tau=r_j^2$ and $\tau=\eps\rho^2$ is less than $1/100$, provided $j$ is sufficiently large.
Hence, closeness to a multiplicity-one half plane at scale $\tau=\eps\rho^2$ for $j$ large enough gives the desired contradiction with \eqref{lowerdensity}. 
\end{proof}

\bigskip

\section{Multiplicity-one for free boundary flows}

In this section, we prove that every generalized limit flow  has multiplicity-one. To this end, we will adapt the arguments from \cite{EHIZ} to our setting of flows with surgeries.

\subsection{Large blowups with entropy at most two}

In this subsection, fixing the initial domain $K_0\subset D$, we consider the class $\mathcal{C}$ of all blowup limits $(\mathcal{K},\mathcal{M})$ given by case (a) of Proposition \ref{prop_LI} (large blowups), such that
\begin{enumerate}
\item $\lim_{\tau\to\infty}\Theta(\mathcal{M},0,\tau)\leq 2$ respectively  $\lim_{\tau\to\infty}\Theta(\widetilde{\mathcal{M}},0,\tau)\leq 2$,
\item $(\mathcal{K},\mathcal{M})$ respectively $(\widetilde{\mathcal{K}},\widetilde{\mathcal{M}})$ is not a static or quasistatic multiplicity-two plane.
\end{enumerate}

Here, in case $(\mathcal{K},\mathcal{M})$ is defined in $\mathbb{R}^3_+$ we denote by $(\widetilde{\mathcal{K}},\widetilde{\mathcal{M}})$ the reflected flow in $\mathbb{R}^3$. 

\begin{proposition}[partial regularity]
\label{prop:partial-reg}
For $(\mathcal{K},\mathcal{M})\in\mathcal{C}$ tangent flows at singular points cannot be static or quasi-static. In particular, the dimension of the singular set is at most $1$.
\end{proposition}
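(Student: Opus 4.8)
The plan is to rule out static and quasi-static tangent flows at singular points, and then conclude the dimensional bound by Federer-style dimension reduction combined with Edelen's stratification for free boundary Brakke flows \cite{Edelen_brakke}. The key point is that a static tangent flow is a multiplicity-$k$ plane (or half plane, in the free boundary case), and a quasi-static tangent flow is a multiplicity-$k$ plane (or half plane) with some surgery structure; in either case, by the Gaussian density lower bound from unit regularity for free boundary Brakke flows, at a singular point we must have $k\geq 2$. So it suffices to exclude multiplicity-$\geq 2$ static and quasi-static tangent planes and half planes.

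First I would recall that any tangent flow to $(\mathcal{K},\mathcal{M})\in\mathcal{C}$ at a singular point is itself, by a further blowup, a $\Lambda'$-generalized limit flow in the sense of Definition \ref{def_genlim} (one iterates Theorem \ref{prop_compactness} together with Proposition \ref{prop_LI}), and the entropy bound $\Theta\leq 2$ together with upper semicontinuity of the (reflected) Gaussian density is inherited. If the tangent flow were a static or quasi-static multiplicity-$k$ plane or half plane with $k\geq 2$, then its Gaussian density is exactly $k\geq 2$; since the entropy of $(\mathcal{K},\mathcal{M})$ is at most $2$ and entropy is monotone under blowup, we are forced into $k=2$ and the tangent flow is a multiplicity-two plane or half plane. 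But this is exactly case (2) in the definition of the class $\mathcal{C}$, which is excluded by hypothesis. (In the free boundary case one uses that the reflected flow $(\widetilde{\mathcal K},\widetilde{\mathcal M})$ is not a static or quasi-static multiplicity-two plane, so neither is the original flow in $\mathbb{R}^3_+$.) To make the quasi-static case rigorous one has to observe that the surgeries, at the unit-regularity scale, only contribute in a half neck/neck region whose scale goes to zero under the blowup, so the tangent flow genuinely is static or quasi-static with no surviving surgeries, and then the density computation applies verbatim. This handles the first sentence of the proposition.

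For the dimensional statement, with static/quasi-static tangent flows excluded, every tangent flow at a singular point is a nontrivial (non-planar) ancient Brakke $\delta$-flow or ancient free boundary Brakke $\delta$-flow; in particular its spatial singular set, if the tangent flow splits off a line, has strictly smaller dimension, so by Federer dimension reduction the top stratum $\mathcal{S}^2$ (Euclidean factor of dimension $2$, i.e. a static plane through that point) is empty. Combined with White's stratification theorem as adapted to free boundary Brakke flows by Edelen \cite{Edelen_brakke}, which gives $\dim \mathcal{S}^j \leq j$ in the appropriate parabolic sense and reduces the regularity question to the structure of tangent flows, the spatial singular set at each time has dimension at most $1$. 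The main obstacle I anticipate is the quasi-static case: one must be careful that the surgery regions of the generalized limit flow, which sit at a fixed scale $r_\sharp$, behave correctly under the dimension-reduction blowup — in the end they cannot survive in a tangent flow precisely because the blowup sends $r_\sharp\to 0$, and the local smoothness near surgery regions (Definition \ref{Def_Brakke_surgery}) guarantees no singular points there in the first place, so singular points are all at genuine Brakke-flow locations where Edelen's stratification applies directly. Making this interplay between the fixed-scale surgery structure and the dimension-reduction argument precise is the delicate part; the density computation ruling out static/quasi-static tangent flows is then essentially immediate from the definition of $\mathcal{C}$.
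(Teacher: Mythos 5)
Your overall strategy (rule out static/quasi-static tangent flows via the entropy bound, then invoke stratification in $\mathbb{R}^3$) is the same as the paper's, but there are two substantive problems.

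First, the step ``the tangent flow is a multiplicity-two plane or half plane. But this is exactly case (2) in the definition of $\mathcal{C}$'' is a non-sequitur as written. Condition (2) excludes $(\mathcal{K},\mathcal{M})$ \emph{itself} (or its reflection) from being a static or quasi-static multiplicity-two plane; it says nothing directly about tangent flows. The missing link is precisely the step the paper names: the equality case of Huisken's monotonicity formula. Once you know the tangent flow has Gaussian density $2$ while $\lim_{\tau\to\infty}\Theta(\mathcal{M},0,\tau)\leq 2$, monotonicity forces $\Theta(\mathcal{M},0,\tau)\equiv 2$ for all $\tau$, and the equality case of the monotonicity formula then forces $(\widetilde{\mathcal{K}},\widetilde{\mathcal{M}})$ to be self-similar, i.e.\ to \emph{equal} the static (or quasi-static) multiplicity-two plane. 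Only at that point does condition (2) of $\mathcal{C}$ kick in. Your argument omits this rigidity step entirely, so the contradiction you claim does not follow from what you wrote.

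Second, most of your discussion of surgeries is based on a misreading of what the class $\mathcal{C}$ is. By its definition in this subsection, $\mathcal{C}$ consists only of limits given by case (a) of Proposition \ref{prop_LI} (large blowups), which are mean-convex (free boundary) Brakke flows with \emph{no} residual surgery regions -- case (a) is precisely the case where no $P(0,R)$ is eventually modified by surgeries. So there is no ``fixed-scale surgery structure'' to worry about, a ``quasi-static multiplicity-$k$ plane with some surgery structure'' is not something that can arise here, and the entire paragraph about reconciling $r_\sharp$ with the dimension-reduction blowup is moot. (That concern would be relevant if $\mathcal{C}$ also contained outputs of Theorem \ref{prop_compactness}, but it does not.) Once you drop the surgery discussion and insert the equality-case-of-monotonicity rigidity step, your argument matches the paper's short proof.
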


Here, the singular set is defined as the collection of all space-time points that do not have a backwards parabolic neighborhood in which the flow is smooth with multiplicity-one.

\begin{proof}
In case $(\mathcal{K},\mathcal{M})$ is defined in $\mathbb{R}^3_+$ we consider the reflected flow $(\widetilde{\mathcal{K}},\widetilde{\mathcal{M}})$.
By the equality case of Huisken's monotonicity formula \cite{Huisken_monotonicity} and the definition of the class $\mathcal{C}$ no tangent flow can be a static or quasistatic plane of higher multiplicity. Together with standard stratification \cite{White_stratification}, remembering that we are working in $\mathbb{R}^3$, the assertion follows.
\end{proof}

\begin{corollary}[static or quasistatic limits]\label{cor:staticlimits}
If $(\mathcal{K},\mathcal{M})\in \mathcal{C}$ is static or quasi-static, then one of the following five cases occurs:
\begin{enumerate}
\item $\mcfK$ is a (quasi-)static half space in $\mathbb{R}^{3}$, and $\mcfM$ is the (quasi-)static plane $\partial \mcfK$.
\item $\mcfM$ is a pair of two (quasi-)static parallel multiplicity-one planes in $\mathbb{R}^{3}$ and $\mcfK$ is the region in between.
\item $\mcfK$ is a (quasi-)static quarter space in $\mathbb{R}^3_+$, and $\mcfM$ is the (quasi-)static half plane $\partial \mcfK$ with multiplicity-one.
\item $\mcfM$ is a pair of(quasi-)static multiplicity-one half planes in $\mathbb{R}^3_+$ with free boundary and $\mcfK$ is the region in between.
\item $\mcfM$ is a (quasi-)static multiplicity one plane in $\mathbb{R}^3_+$ parallel to $\partial\mathbb{R}^3_+$, and $\mcfK$ is the region in between.
\end{enumerate}
\end{corollary}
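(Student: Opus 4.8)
\textbf{Proof plan for Corollary \ref{cor:staticlimits}.}

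The plan is to combine Proposition \ref{prop:partial-reg} (partial regularity) with the structure theory for static and quasi-static Brakke flows, treating separately the interior case in $\mathbb{R}^3$ and the free boundary case in $\mathbb{R}^3_+$. First I would recall that a static (or quasi-static) Brakke flow corresponds, at each time slice, to a (stationary, possibly singular) integral varifold, and that by Proposition \ref{prop:partial-reg} no tangent flow at a singular point can be static or quasi-static; since every tangent flow of a static flow is itself static (and in $\mathbb{R}^3$, by stratification and the fact that we are in low dimension, is in fact a static cone which must be planar if it is smooth), this forces the varifold $\partial K_t$ to be smooth away from at most a finite set of points in each time slice. But a static two-dimensional minimal cone in $\mathbb{R}^3$ with an isolated singularity must be a union of halfplanes meeting along a line, and the mean-convexity/one-sided minimization inherited via Corollary \ref{prop_onesidedmin} rules out genuine triple junctions or transverse intersections, so in fact $\mathcal{M}$ is a smooth stationary varifold, i.e. a union of planes (in $\mathbb{R}^3$) or planes and free boundary halfplanes meeting $\partial\mathbb{R}^3_+$ orthogonally (in $\mathbb{R}^3_+$).

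Next I would use that $\mathcal{M}$ has entropy (Gaussian density at infinity) at most $2$ by the definition of the class $\mathcal{C}$, together with the exclusion of the multiplicity-two plane, to bound the number of sheets. In the interior case $\mathbb{R}^3$: a static union of planes with entropy $\leq 2$ is either a single multiplicity-one plane, or two parallel multiplicity-one planes, or a multiplicity-two plane (excluded by hypothesis); any two non-parallel planes, or a plane of multiplicity $\geq 2$ plus anything, would push the entropy above $2$. Since $\mathcal{K}$ is a nested family of closed sets with $\partial\mathcal{K}=\operatorname{spt}\mathcal{M}$ (away from surgery regions, of which there are none here since the flow is static), $\mathcal{K}$ must be a halfspace in the one-plane case (this gives case (1)) or the slab between the two parallel planes in the two-plane case (case (2)). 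In the free boundary case $\mathbb{R}^3_+$, each connected component of $\operatorname{spt}\mathcal{M}$ is either a halfplane meeting $\partial\mathbb{R}^3_+$ orthogonally with free boundary, or a plane parallel to $\partial\mathbb{R}^3_+$ (a plane meeting $\partial\mathbb{R}^3_+$ transversally but not orthogonally is not stationary as a free boundary varifold); the reflected flow $\widetilde{\mathcal{M}}$ in $\mathbb{R}^3$ then has entropy $\leq 2$ and is a union of planes, and reflecting an orthogonal halfplane doubles it into one plane while reflecting a parallel plane produces two planes, so the entropy bound again limits us to: one free boundary halfplane (case (3)), two parallel free boundary halfplanes bounding the slab (case (4)), or one plane parallel to $\partial\mathbb{R}^3_+$ (case (5)). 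The nestedness of $\mathcal{K}$ identifies $\mathcal{K}$ as the quarter space, the slab, or the region cut out as claimed.

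I expect the main obstacle to be the smoothness/planarity step: carefully justifying that the stationary varifold $\partial K_t$ has no singular points, using Proposition \ref{prop:partial-reg} together with the one-sided minimization of Corollary \ref{prop_onesidedmin} to exclude the otherwise-allowed triple-junction cones (which are stationary as varifolds but not as boundaries of one-sided minimizers), and doing this uniformly in $t$ so that the whole flow is seen to be static in the naive sense. One has to be a little careful in the free boundary setting that the reflection across $\partial\mathbb{R}^3_+$ is compatible with these arguments — but this is exactly what \cite[Proposition 4.6]{Edelen_brakke} provides, and the orthogonality of the free boundary contact is forced by stationarity of the (reflected) varifold. The remaining bookkeeping — matching the varifold structure to the five listed descriptions of the pair $(\mathcal{K},\mathcal{M})$ and checking the entropy inequality is sharp exactly in these cases — is routine.
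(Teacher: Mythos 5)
Your proof follows essentially the same strategy as the paper's: smoothness from Proposition~\ref{prop:partial-reg}, the entropy bound $\leq 2$ together with the exclusion of the multiplicity-two plane to cap the number of sheets at two, and nestedness plus Corollary~\ref{prop_onesidedmin} to read off $\mathcal{K}$. Two remarks on the details. First, your opening step is more roundabout than necessary and slightly off: since every tangent flow of a static (or quasi-static) flow is itself static (resp.\ quasi-static), and Proposition~\ref{prop:partial-reg} says that such tangent flows cannot occur at singular points, the immediate conclusion is that there are \emph{no} singular points at all, not merely finitely many per time slice. This renders the subsequent analysis of minimal cones and triple junctions unnecessary; the paper simply writes that $\partial\mathcal{K}$ is smooth by Proposition~\ref{prop:partial-reg} and moves on. Second, you pass from ``smooth stationary varifold'' to ``union of planes/half planes'' without justification; the paper compresses the same step into the phrase ``smooth and flat,'' so this is a shared terseness rather than a gap specific to your argument (the point is that each smooth, complete, multiplicity-one component with bounded Gaussian density at infinity, minimizing to one side, must be planar). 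Your handling of the $\mathbb{R}^3$ versus $\mathbb{R}^3_+$ dichotomy, the orthogonality of the free boundary contact, the reflection bookkeeping, and the final enumeration of the five cases all match the intended argument.
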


\begin{proof}
Since it is (quasi-)static, $\partial\mathcal{K}$ must be smooth and flat by Proposition \ref{prop:partial-reg} (partial regularity), and hence a union of one or two disjoint (quasi-)static multiplicity-one planes or half planes. The result then follows from one-sided minimization (Corollary \ref{prop_onesidedmin}).
\end{proof}

\begin{theorem}[separation theorem]\label{thm:sep}
Let $(\mathcal{K},\mathcal{M})\in \mathcal{C}$. In case the flow is defined in $\mathbb{R}^3_+$, suppose that there is a half plane $H$ perpendicular to $\partial\mathbb{R}^3_+$, such that
$H\subseteq \bigcap_t K_t$,
and suppose the complement of $\cap_t K_t$ contains points on each side of $H$. Then, $(\mathcal{K},\mathcal{M})$ is static, and $K_t$ is the region between two parallel half planes perpendicular to $\partial\mathbb{R}^3_+$. Moreover, a similar statement holds in case the flow is defined in entire space and contains a plane.
\end{theorem}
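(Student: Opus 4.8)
The plan is to reduce the separation theorem to a one-dimensional argument about the width between the two parallel components of $\partial K_t$, exploiting the one-sided minimization property from Corollary \ref{prop_onesidedmin}. First I would observe that since $H\subseteq\bigcap_t K_t$ and the complement of $\cap_t K_t$ has points on both sides of $H$, the region $K_t$ is, for every $t$, a ``slab-like'' domain: it contains the fixed half plane $H$ and is pinched on both sides. I would introduce the two boundary components $\Sigma_t^\pm=\partial K_t$ lying on the two sides of $H$, and define $w_\pm(t)$ to be the (signed) distance from $H$ to $\Sigma_t^\pm$, so that the total width is $w(t)=w_+(t)+w_-(t)>0$. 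The goal is to show $w$ is constant and that each $\Sigma_t^\pm$ is in fact a half plane parallel to $H$; this forces $(\mathcal{K},\mathcal{M})$ to be static with the claimed structure.

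Next I would run the following dichotomy. By mean-convexity the width $w(t)$ is monotone nonincreasing in $t$; since the flow is ancient (defined for $t\in(-\infty,T)$ in the large-blowup case, or ancient in the Brakke $\delta$-flow case) and $w(t)>0$ always, $w$ must converge to finite limits $w(-\infty)\ge w(+\infty)\ge$ (something positive, using that $H$ stays inside). The key point is a rigidity statement: if $w$ is \emph{not} constant, then by parabolic rescaling as $t\to-\infty$ — using the compactness of the class $\mathcal{C}$ under such rescalings together with Proposition \ref{prop:partial-reg} (partial regularity) to guarantee smoothness of the limit — one obtains a tangent flow at $-\infty$ which is a shrinking self-similar flow trapped between two parallel half planes, hence a shrinking half cylinder or shrinking plane. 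Comparing entropies with condition (1) in the definition of $\mathcal{C}$ (entropy at most two) and ruling out the multiplicity-two plane via condition (2), together with the constraint that $H$ is contained for all time, yields a contradiction: a genuinely shrinking slab cannot contain a fixed half plane for all $t\le 0$. Therefore $w(t)\equiv w_0$ is constant.

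Once $w$ is constant, I would invoke the equality case in the evolution of the distance function between the two boundary sheets: constancy of $w$ forces $H\equiv 0$ on $\partial K_t$ at all the relevant points, and then, combined with one-sided minimization (Corollary \ref{prop_onesidedmin}) which makes each time-slice area-minimizing in its homotopy class inside $K_t$, each $\Sigma_t^\pm$ must be a stable minimal surface trapped as a ``leaf'' of a parallel foliation; in $\mathbb{R}^3_+$ a complete stable free-boundary minimal surface perpendicular to $\partial\mathbb{R}^3_+$ and lying on one side of $H$ at bounded distance is a half plane parallel to $H$ (this uses the classical stability/Bernstein-type rigidity, available here since the relevant slices are smooth by Proposition \ref{prop:partial-reg}). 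Hence $K_t$ is the static region between two parallel half planes perpendicular to $\partial\mathbb{R}^3_+$, and $\mathcal{M}$ is the corresponding static pair of multiplicity-one half planes; the argument in entire space is verbatim with half planes replaced by planes.

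The main obstacle I expect is the rigidity step that upgrades ``$w$ nonincreasing and bounded'' to ``$w$ constant'': one must rule out the possibility of a slowly-shrinking ancient slab, and the cleanest route is the blow-down / tangent-flow-at-infinity analysis, which requires knowing that the rescaled limits land inside $\mathcal{C}$ (or a sufficiently controlled class) and are smooth, so that Huisken's monotonicity and the entropy bound can be applied without interference from the singular set — this is exactly where Proposition \ref{prop:partial-reg} and the entropy normalization in the definition of $\mathcal{C}$ are essential. A secondary technical point is handling the surgery regions in the Brakke $\delta$-flow case: since the statement is about the global limit $(\mathcal{K},\mathcal{M})$ which already has its surgeries at a fixed scale $r_\sharp$, one checks that the distance-between-sheets argument is unaffected away from the finitely many surgery parabolic balls, and that the presence of $H$ in the interior keeps those surgeries from occurring along the separating region.
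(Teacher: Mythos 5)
Your proof takes a genuinely different route from the paper's and, as written, has a gap in the rigidity step. The paper's argument is short and purely density-theoretic: after reflecting, it uses Proposition \ref{prop:partial-reg} and Corollary \ref{prop_onesidedmin} to follow the proof of \cite[Theorem 6.4]{EHIZ} and split $\widetilde{\mathcal{M}}$ into two Brakke flows $\widetilde{\mathcal{M}}_\pm$, one on each side of the reflected half plane $\widetilde H$; each component has Gaussian density at least one, the sum is at most two by condition (1) in the definition of $\mathcal{C}$, so each has density exactly one and is therefore a multiplicity-one plane by the equality case of Huisken's monotonicity; and since $\widetilde K_t\supseteq\widetilde H$ is nonempty for all $t$, each plane is static rather than quasistatic. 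No width function, no rescaling at $t\to-\infty$, and no stability or Bernstein-type argument on the time slices are used.

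Your proposal instead introduces the width $w(t)$ between the two boundary sheets and tries to upgrade ``nonincreasing and positive'' to ``constant.'' The stated contradiction --- ``a genuinely shrinking slab cannot contain a fixed half plane for all $t\le 0$'' --- is false as written: a slab with width decreasing monotonically from, say, $2$ to $1$ over $(-\infty,0]$ still contains $H$ at every time, so nonconstancy of $w$ by itself yields nothing. You flag this and suggest a blow-down at $t\to-\infty$, but that blow-down is a \emph{multiplicity-two} plane (both sheets collapse onto the same plane as one zooms out), which is not directly excluded by condition (2) in the definition of $\mathcal{C}$: that condition excludes the flow itself being a multiplicity-two plane, not its tangent flow at infinity. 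To finish you would have to invoke the equality case of monotonicity once more, arguing that a tangent flow at $-\infty$ of entropy two forces the flow itself to be a static or quasistatic multiplicity-two plane, which is then excluded --- a step you gesture at but do not execute. This is salvageable but a strictly longer detour to the same conclusion; and your closing Bernstein/stability step on each sheet is also unnecessary, since once each component is known to have density one the planarity follows immediately. The structural move you are missing is the splitting of $\widetilde{\mathcal{M}}$ into two Brakke flows and the direct accounting of densities, which lets the paper bypass the width function and the blow-down analysis entirely.
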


\begin{proof}
Using Proposition \ref{prop:partial-reg} (partial regularity) and  Corollary \ref{prop_onesidedmin} (one-sided minimization for generalized limit flows) we can follow the proof of \cite[Theorem 6.4]{EHIZ} to show that the reflected flow $\widetilde{\mathcal{M}}$ splits into two components $\widetilde{\mcfM}_\pm$, each contained in the respective halfspace defined by $\widetilde{H}$, which is obtained from $H$ by reflection.
Since each Brakke flow $\widetilde{\mcfM}_\pm$ has density at least 1, but the sum of densities is at most 2, each $\widetilde{\mcfM}_\pm$ must have density exactly 1 and hence be a multiplicity-one plane.
Finally, since $\widetilde{K}_t \supseteq \widetilde{H}$ is in particular nonempty for all $t$, we conclude that each plane $\widetilde{\mcfM}_\pm$ is static. This implies the assertion.
\end{proof}

Now, as in \cite[Section 4]{White_size}, for a set $S\subseteq D$, a point $x\in D$, and a radius $r>0$, the \emph{relative thickness} of $S$ in $B(x,r)$ is defined by
\begin{equation}
\mathrm{Th}(S,x,r) =\frac{1}{r} \inf_{|v|=1}  \sup_{y\in S\cap B(x,r)} |\langle v, y-x\rangle|.
\end{equation}

\begin{theorem}[Bernstein-type theorem]\label{thm:bern}
There exists an $\eps>0$ with the following significance. If $(\mcfK,\mcfM)\in\mathcal{C}$ is defined in $\mathbb{R}^3_+$ and there is a point $x$ such that
\begin{equation}\label{eq_ass_bern}
\limsup_{r\to\infty} \mathrm{Th}(K_{-r^2},x,r) < \eps
 \qquad  \mathrm{and}  \qquad 
\liminf_{r\to\infty} \frac{\dist(K_{r^2},x)}{r} <1,
\end{equation}
then $\mcfM$ is either a pair of static parallel multiplicity-one half planes with free boundary or a static multiplicity-one plane parallel to $\partial\mathbb{R}^3_+$. In either case $\mcfK$ is the region in between the planes of $\mcfM$ and $\partial\mathbb{R}^3_+$. Similarly, if $(\mcfK,\mcfM)$ is defined in $\mathbb{R}^{3}$, then under the same assumptions $\mcfM$ is a pair of static parallel multiplicity-one planes, with $\mcfK$ the region in between.
\end{theorem}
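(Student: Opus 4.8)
I would prove the Bernstein-type theorem by a blow-down argument, reducing it to the separation theorem (Theorem \ref{thm:sep}) via the thickness hypothesis. First, I would set up the blow-down: consider the sequence of parabolic rescalings $(\mcfK^{(\lambda)},\mcfM^{(\lambda)}):=\dilD_{1/\lambda}((\mcfK,\mcfM)-(x,0))$ as $\lambda\to\infty$. Since $(\mcfK,\mcfM)\in\mathcal{C}$ and hence has entropy at most two (and the entropy is monotone under this rescaling), a subsequence converges to a tangent flow at infinity $(\mcfK_\infty,\mcfM_\infty)$, which is again a mean-convex (free boundary) flow of entropy at most two, and is a self-shrinker. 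Here is where I would use the first half of \eqref{eq_ass_bern}: the hypothesis $\limsup_{r\to\infty}\mathrm{Th}(K_{-r^2},x,r)<\eps$ forces the (backwards portion of the) blow-down limit to be pinched into a slab of relative thickness less than $\eps$, so it cannot be the shrinking cylinder or shrinking sphere (whose thickness would be order one), and by the mean-convex self-shrinker classification it must be a static or quasistatic plane or half plane, i.e.\ it falls into the cases of Corollary \ref{cor:staticlimits}. Since entropy is at most two and by the definition of the class $\mathcal{C}$ it is not a multiplicity-two plane, $\mcfM_\infty$ is a single multiplicity-one plane or half plane (or pair of parallel ones, still fine).

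**Extracting a plane inside the flow.** From the blow-down limit being a (quasi-)static plane or half plane, I would extract a genuine plane or half plane $H$ sitting inside $\bigcap_t K_t$ for the original flow. Concretely, one-sided minimization (Corollary \ref{prop_onesidedmin}) together with the fact that the blow-down slab has small thickness pins $\partial K_t$ between two nearly-parallel hyperplanes at every scale; taking a further limit of minimizing $2$-chains, or arguing directly as in \cite[Section 4--5]{White_size}, produces an honest affine half plane $H$ perpendicular to $\partial\mathbb{R}^3_+$ (in the $\mathbb{R}^3_+$ case) contained in $K_t$ for all $t$. This is the step where the precise geometry of ``thickness $<\eps$'' has to be converted into ``contains a flat piece'': I expect to mimic White's clearing-out / thickness estimates, using that small relative thickness at all backward scales plus one-sided minimization and partial regularity (Proposition \ref{prop:partial-reg}, which forbids static tangent flows at singular points) forces $\partial K_t$ to be smooth and to contain a complete affine half plane.

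**Applying the separation theorem and closing up.** Once $H\subseteq\bigcap_t K_t$ is established, I would invoke the second half of \eqref{eq_ass_bern}, namely $\liminf_{r\to\infty}\dist(K_{r^2},x)/r<1$: this says the forward-in-time flow does not move away from $x$ at unit speed, which (since $x$ lies at bounded distance from $H$, by the thickness bound) forces the complement of $\cap_t K_t$ to contain points on each side of $H$ — otherwise $K_t$ would be exactly a half space on one side of $H$, contradicting the mean-convex flow sweeping inward and the $\liminf$ bound. With both hypotheses of Theorem \ref{thm:sep} (separation theorem) now verified, that theorem gives that $(\mcfK,\mcfM)$ is static and $K_t$ is the region between two parallel half planes perpendicular to $\partial\mathbb{R}^3_+$, or (in the borderline case where the half plane is parallel to $\partial\mathbb{R}^3_+$) a static multiplicity-one plane parallel to the boundary with $K_t$ the slab in between; the entropy-at-most-two constraint is what guarantees multiplicity-one in each component. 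The case $\mcfM$ defined in $\mathbb{R}^3$ is identical, using the plane version of the separation theorem.

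**Main obstacle.** The delicate point is the middle step: passing from ``the blow-down limit is a thin slab / flat plane'' to ``the original flow literally contains an affine half plane in every time slice.'' A priori the blow-down only controls things at infinity, and one must rule out that the flow, while thin at large scales, is genuinely curved (e.g.\ slowly rounding off) at bounded scales. The resolution is to feed small thickness back through the one-sided minimization and the partial regularity statement: smallness of $\eps$ makes any curvature incompatible with the one-sided minimizing property at the relevant scale, analogously to the rigidity arguments in \cite{White_size} and \cite[Section 6]{EHIZ}. Getting the free-boundary bookkeeping right — in particular that the extracted plane is perpendicular to $\partial\mathbb{R}^3_+$ and that reflection across $\partial\mathbb{R}^3_+$ does not create spurious higher multiplicity — is the part that requires care beyond the interior case.
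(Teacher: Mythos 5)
Your route is genuinely different from the paper's and, as you yourself flag, has a real gap in the middle step. You blow down parabolically, argue the blow-down is a static/quasistatic plane, and then try to promote ``thin slab at infinity'' to ``an affine half plane is literally contained in $\bigcap_t K_t$.'' The paper avoids this entirely: it first applies the expanding hole result \cite[Corollary 6.7]{EHIZ} to the nested mean-convex family $\mathcal{K}$, which takes as input \emph{precisely} the two hypotheses of \eqref{eq_ass_bern} (small relative thickness at all backward scales, and forward distance to $x$ growing slower than $r$) and outputs that $\Sigma:=\bigcap_t K_t\neq\emptyset$. Then, instead of a parabolic blow-down, it takes a plain \emph{time-translation} of $(\mcfK,\mcfM)$ to $-\infty$; since $\mcfK$ is nested, the limit is a static flow whose time-slices are exactly $\Sigma$. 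That limit is then either a multiplicity-two half plane or, if it belongs to $\mathcal{C}$, is pinned down by Corollary \ref{cor:staticlimits} together with the thickness bound to be a pair of free boundary half planes or a slab bounded by $\partial\mathbb{R}^3_+$ and a parallel plane. Either way $\Sigma$ contains a half plane with complement on both sides, so Theorem \ref{thm:sep} closes the argument.

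Concretely, the issue with your version: a parabolic blow-down controls the flow only at the rescaled time $\sim-\lambda^2$, one scale at a time, and gives no direct information about $\bigcap_t K_t$. A quasistatic plane as blow-down limit is perfectly consistent with $\bigcap_t K_t=\emptyset$ (the flow vanishing), and your ``taking a further limit of minimizing $2$-chains'' sketch does not rule this out. The forward hypothesis $\liminf_{r\to\infty}\dist(K_{r^2},x)/r<1$ is exactly what prevents this collapse, but it has to be fed into a quantitative nonemptiness statement — that is the expanding hole lemma, and you never invoke it. Once you have $\Sigma\neq\emptyset$, the time-translation is a much more economical device than a parabolic blow-down because its slices \emph{are} $\Sigma$; you then never need to ``extract'' anything. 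Your final step (feeding both hypotheses into the separation theorem) matches the paper, and your observation that reflection and multiplicity need care at the boundary is correct, but those points are resolved in the paper by the classification in Corollary \ref{cor:staticlimits} applied to the static time-translation limit rather than by a fresh rigidity argument.
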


\begin{proof}
The statement for flows in $\mathbb{R}^{3}$ follows from the proof of \cite[Theorem 6.4]{H_surgery}, so we focus on the case of free boundary flows in $\mathbb{R}^3_+$.

Since $\mathcal{K}$ is nested and does not bump into any smooth subsolution of the free boundary mean curvature flow, choosing $\eps$ small enough we can apply the expanding hole result from \cite[Corollary 6.7]{EHIZ} to infer that $\Sigma:=\bigcap_t K_t$ is nonempty. Now, consider the flows obtained by translating $(\mathcal{K},\mathcal{M})$ by $(0,-T)$ and let $(\mathcal{K}',\mathcal{M}')$ be a limit as $T\to\infty$. Then $(\mathcal{K}',\mathcal{M}')$ is a static flow with $K_t'=\Sigma$ at any time $t$.
Note that $(\mathcal{K}',\mathcal{M}')$ either is a multiplicity-two half plane, or belongs to the class $\mathcal{C}$ and hence by Corollary \ref{cor:staticlimits} (static or quasistatic limits) and assumption \eqref{eq_ass_bern} is either the region in between a pair of multiplicity-one free boundary half planes or the region bounded by $\partial\mathbb{R}^3_+$ and a parallel multiplicity-one plane. Hence, Theorem \ref{thm:sep} (separation theorem), applied directly in the former cases and applied to the reflected flow in the final case, respectively, implies the result.
\end{proof}

\subsection{Sheeting theorem for $\Lambda$-blowup sequences}
In this subsection, we establish a sheeting theorem for $\Lambda$-blowup sequences with $\Lambda>0$. We start with the following lemma:

\begin{lemma}[slab rescaling]\label{lem:sheetblow}
Let $(\widetilde{\mcfK}^j,\widetilde{\mcfM}^j)$ be a $\Lambda$-blowup sequence with $\Lambda>0$, and suppose that
\begin{equation}\label{eq_ass}
d_{\mathrm{H}}\big(\widetilde{\mcfK}^j\cap P(0,2), (V\times \mathbb{R})\cap P(0,2)\big) \rightarrow 0,
\end{equation}
where $V$ is either a plane or a half plane. Then, there exists a sequence $\mu_j\to\infty$, such that $\mathcal{D}_{\mu_j} \widetilde{\mcfM}^j$ converges smoothly to either (a)
a pair of parallel planes in $\mathbb{R}^{3}$ or (b) a pair of parallel half planes with free boundary in $\mathbb{R}^{3}_+$ or (c) a multiplicity-one plane parallel to $\partial\mathbb{R}^{3}_+$.
Moreover, in all cases $\mathcal{D}_{\mu_j} \widetilde{\mcfK}^j$ converges to the enclosed region.
\end{lemma}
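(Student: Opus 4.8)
\textbf{Proof proposal for Lemma \ref{lem:sheetblow} (slab rescaling).}

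The plan is to argue by contradiction using a point-selection and blowup procedure, exactly in the spirit of the ``slab rescaling'' arguments of White and of \cite{EHIZ}. First I would observe that assumption \eqref{eq_ass} gives, for each $j$, a slab thickness $h_j\to 0$, namely $h_j := \sup\{|\langle \nu, y-y'\rangle| : y,y' \in \widetilde{\mcfK}^j \cap P(0,1)\}$ where $\nu$ is the unit normal to $V$; here $h_j>0$ since the flows are strictly mean-convex and cannot be literally flat. Set $\mu_j := 1/h_j \to \infty$, so that $\mathcal{D}_{\mu_j}\widetilde{\mcfK}^j$ has thickness exactly $1$ in the unit parabolic ball and is squeezed into a slab of bounded thickness on compact sets. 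The associated rescaled measures $\mathcal{D}_{\mu_j}\widetilde{\mcfM}^j$ have locally bounded area (by the one-sided minimization of Proposition \ref{basic_prop} together with the slab confinement) and Gaussian density ratios bounded by $2$ on compact sets (since a slab of thickness $\ll 1$ caught between sheets contributes density close to an integer $\le 2$). Crucially, after rescaling by $\mu_j$ the surgery necks have radius $\mu_j r^j_i \in [\mu_j\Lambda_j/2, 2\mu_j\Lambda_j]$ with $\mu_j\Lambda_j \to \infty$ since $\Lambda>0$ and $\mu_j\to\infty$; so this is a $\Lambda'=\infty$ situation in the terminology of Section 4, and Proposition \ref{prop_LI} (large blowups) applies to a subsequence of $\mathcal{D}_{\mu_j}\widetilde{\mcfM}^j$. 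Because the flow is confined to a slab of thickness $1$, conclusion (a) of Proposition \ref{prop_LI} is impossible unless the limit is flat: a mean-convex (free boundary) limit flow trapped in a bounded slab must, by the strong maximum principle / one-sided minimization (Corollary \ref{prop_onesidedmin}) and the Bernstein-type Theorem \ref{thm:bern}, consist of flat sheets; and conclusion (b) directly gives smooth convergence to a (quasi-)static multiplicity-one plane or half plane, which with the thickness normalization $=1$ forces the two-sheeted picture.

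Next I would pin down which of the three cases (a)--(c) occurs and upgrade to smooth convergence. The limit $(\mcfK',\mcfM')$ is a static or quasistatic generalized limit flow of entropy $\le 2$, not a multiplicity-two plane (because the normalized thickness is $1>0$, so the two sheets are genuinely separated in the limit), hence it lies in the class $\mathcal{C}$ and Corollary \ref{cor:staticlimits} (static or quasistatic limits) applies: in $\mathbb{R}^3$ the only option consistent with slab confinement and with $\partial\mcfK'$ nonempty is case (2) there, a pair of parallel multiplicity-one planes bounding the region in between; in $\mathbb{R}^3_+$ the options are case (4), a pair of parallel free boundary half planes, or case (5), a single plane parallel to $\partial\mathbb{R}^3_+$ — these are precisely the three claimed alternatives. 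The quasistatic possibility is excluded because the original $\widetilde{\mcfK}^j$ are genuine (non-static) mean-convex flows confined to a shrinking slab, so the rescaled limit is static — alternatively one invokes Theorem \ref{thm:sep} (separation theorem) once a flat half plane is known to sit inside $\cap_t K'_t$. Smooth (rather than merely Brakke/Hausdorff) convergence then follows from Allard-type / Brakke local regularity once the limit is known to be a multiplicity-one smooth static object: the density ratios of $\mathcal{D}_{\mu_j}\widetilde{\mcfM}^j$ converge to $1$ near the limit sheets, so White's local regularity theorem (in Edelen's free boundary form, \cite{Edelen_brakke}) upgrades the convergence to smooth on each sheet, with multiplicity one. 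Finally, $\mathcal{D}_{\mu_j}\widetilde{\mcfK}^j \to$ enclosed region follows from Hausdorff convergence of the space-time tracks (part (a) of the relevant convergence definition) together with the fact that the $K^j_t$ are the regions bounded by $\partial K^j_t$.

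I expect the main obstacle to be bookkeeping around the surgeries under the extra rescaling by $\mu_j$: one must check that after rescaling the surgery regions genuinely escape to infinity (i.e.\ that $\mu_j\Lambda_j\to\infty$, which uses $\Lambda>0$ in an essential way) so that near any fixed compact set the rescaled flow is, for $j$ large, an honest smooth free boundary Brakke flow with no surgery modifications — only then is Proposition \ref{prop_LI} and the local regularity machinery applicable. A secondary subtlety is ruling out the ``degenerate'' scenario in which the thickness $h_j$ is not realized stably, i.e.\ ensuring $\mu_j\to\infty$ is the \emph{right} rate so that the limit is neither a single multiplicity-two plane (ruled out by the normalization) nor collapses further; this is handled by the standard maximal-thickness choice of $\mu_j$ and by the entropy bound $\le 2$ which caps the number of sheets. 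Everything else — local area bounds, density bounds, passing to subsequential Brakke limits, and identifying the flat limit via Corollary \ref{cor:staticlimits} — is routine given the results already established in Sections 4--6.
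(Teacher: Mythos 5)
Your overall skeleton matches the paper's argument: pick a secondary rescaling $\mu_j\to\infty$, note that $\mu_j\lambda_j/H^j_{\text{neck}}\to\infty$ because $\Lambda>0$, pass to a limit via Proposition \ref{prop_LI} (large blowups), use one-sided minimization to place the limit in the class $\mathcal{C}$, and finish with the Bernstein-type Theorem \ref{thm:bern} and local regularity. That is exactly the shape of the paper's proof.

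However, the specific choice of $\mu_j$ you make is not the one the argument actually needs, and the gap is not merely cosmetic. You set $\mu_j = 1/h_j$, where $h_j$ is an absolute slab thickness measured in $P(0,1)$ with respect to the fixed normal $\nu$ of $V$. The paper instead defines $\mu_j$ as the \emph{largest} number such that the two conditions $\textrm{Th}(\tilde K^j_{-r^2},0,r)\le\eps$ and $d(\tilde K^j_{r^2},0)\le r$ hold simultaneously for all $r\in[\mu_j^{-1},1]$. This maximal multi-scale choice is what makes the argument close. First, it guarantees that after rescaling the two inequalities hold for all $r\in[1,\mu_j]$, so that the limit $(\K,\M)$ satisfies \emph{both} hypotheses \eqref{eq_ass_bern} of the Bernstein-type theorem for every $r\geq 1$; your single-scale normalization controls the thickness only in $P(0,\mu_j)$ and says nothing about the forward distance functional $d(K_{r^2},0)$ at all, so you have not actually verified the hypotheses you are invoking. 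Second, and more importantly, maximality forces at least one of the two inequalities to be an equality at $r=1$ in the limit, which is precisely what rules out the degenerate possibility that the limit is a single multiplicity-one (half) plane, i.e.\ case (b) of Proposition \ref{prop_LI}. Your proposal waves at this point (``ruled out by the normalization,'' ``handled by the standard maximal-thickness choice of $\mu_j$''), but the $\mu_j$ you actually defined is not a maximal-thickness choice, and with it the rescaled flow's thickness at scale $1$ is only bounded above by $1$, not below, so nothing prevents the limit from collapsing to a single plane. So the missing ingredient is the paper's careful two-functional maximal definition of $\mu_j$; with that in place, everything you say afterward about $\Lambda'=\infty$, class $\mathcal{C}$, Corollary \ref{cor:staticlimits}, and upgrading to smooth convergence via \cite{white_regularity,Edelen_brakke} is correct and in line with the paper.
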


\begin{proof}
Fixing $\eps>0$ small enough, let $\mu_j<\infty$ be the largest number such that for all $r\in [\mu_j^{-1},1]$ we have
\begin{equation}\label{eq_how_thick}
\textrm{Th}(\tilde{K}^j_{-r^2},0,r)\leq \varepsilon\qquad \textrm{and} \qquad d(\tilde{K}^j_{r^2},0)\leq r.
\end{equation}
Assumption \eqref{eq_ass} implies that $\mu_j\to \infty$, so remembering also that $\Lambda>0$ we in particular get
${\mu_j \lambda_j / H^j_{\textrm{neck}}} \to \infty$, and thus
by Proposition \ref{prop_LI} (large blowups) we can take a subsequential limit  of $\mathcal{D}_{\mu_j}(\widetilde{\mathcal{K}}^j, \widetilde{\mathcal{M}}^j)$. By construction, any such limit $(\mathcal{K},\mathcal{M})$ satisfies
\begin{equation}\label{eq_how_thick_lim}
\textrm{Th}(K_{-r^2},0,r)\leq \varepsilon\qquad \textrm{and} \qquad d({K}_{r^2},0)\leq r \qquad \textrm{for all } r\geq 1,
\end{equation}
with at least one inequality being non-strict for $r=1$. In particular, we must be in case (a) of  Proposition \ref{prop_LI}.
Taking also into account Corollary \ref{prop_onesidedmin} (one-sided minimization for generalized limit flows) we thus infer that $(\mathcal{K},\mathcal{M})\in\mathcal{C}$ . Hence,  Theorem \ref{thm:bern}  (Bernstein-type theorem) and the local regularity theorem \cite{white_regularity,Edelen_brakke} imply the assertion.
\end{proof}

Denote by $D^j=\lambda_j D$ the domain of the rescaled flow $(\widetilde{\mcfK}^j,\widetilde{\mcfM}^j)$. To construct a separating surface in case ($a$) and ($b$) of the above lemma, we let $S^j_t$ be the set of centers of open balls $B$, such that $B\cap D^j \subseteq K^j_t$ and $\bar{B}\cap D^j$ touches $\partial K^j_t$ at two or more points. Set
\begin{equation}
\mathcal{S}^j = \bigcup_t S^j_t \cap D^j.
\end{equation}

\begin{theorem}[sheeting theorem]\label{thm_sheeting}
Let $(\widetilde{\mathcal{K}}^j, \widetilde{\mathcal{M}}^j)$ be a $\Lambda$-blowup sequence with $\Lambda>0$, and suppose that
\begin{equation}\label{eq_ass_pl2}
d_{\mathrm{H}}\big(\mcfK^j\cap P(0,4), (V\times \mathbb{R})\cap P(0,4)\big) \rightarrow 0,
\end{equation}
where either $D^j \to \mathbb{R}^{3}$ and $V$ is a plane, or $ D^j \to \mathbb{R}^3_+$ and $V$ is a free boundary half plane.
Then, for all $j$ large, $\mathcal{S}^j \cap P(0,1)$ is a $C^1$-hypersurface that divides $\partial \mathcal{K}^j$ into two nonempty components.
\end{theorem}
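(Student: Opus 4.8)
The plan is to follow the strategy from \cite[Section 7]{EHIZ} and the corresponding argument in \cite[Section 7]{H_surgery}, adapted to the presence of surgeries and of the free boundary. First I would apply Lemma \ref{lem:sheetblow} (slab rescaling): after rescaling by a suitable $\mu_j\to\infty$, the flows $\mathcal{D}_{\mu_j}\widetilde{\mcfM}^j$ converge smoothly to either a pair of parallel (half) planes or a multiplicity-one plane parallel to $\partial\mathbb{R}^3_+$, with $\mathcal{D}_{\mu_j}\widetilde{\mcfK}^j$ converging to the enclosed region. The assumption \eqref{eq_ass_pl2} at scale $P(0,4)$, which is strictly larger than the scale $P(0,2)$ used in the lemma, gives room to conclude the sheeting not just at the origin but on all of $P(0,1)$ after undoing the rescaling: the point is that the ``thickness'' of the slab in which $\partial\widetilde{\mcfK}^j$ lies, measured relative to $\mu_j^{-1}$, is comparable to $1$, and this thickness governs the geometry of $\mathcal{S}^j$.

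The key steps, in order, are as follows. (1) Show that for $j$ large the two-or-more-point touching condition defining $S^j_t$ forces the pair of sheets of $\partial\widetilde{\mcfK}^j\cap P(0,1)$ to be graphs over $V$ of functions $u^j_- < u^j_+$ that are $C^1$-close (after the rescaling) and hence, by the local regularity theorem \cite{white_regularity,Edelen_brakke}, smooth with small gradient; this is where Lemma \ref{lem:sheetblow} feeds in. In case (c) one sheet lies on $\partial\mathbb{R}^3_+$ itself, which needs to be handled separately (there $\mathcal{S}^j$ will be empty or degenerate — but then $\widetilde{\mcfK}^j\cap P(0,1)$ is one-sheeted and the hypothesis that $V$ is a \emph{free boundary} half plane together with \eqref{eq_ass_pl2} rules this out, since a plane parallel to $\partial\mathbb{R}^3_+$ is not a free boundary half plane). (2) With the two-sheeted picture in hand, identify $\mathcal{S}^j\cap P(0,1)$ as (essentially) the graph of $\tfrac12(u^j_- + u^j_+)$ together with the loci where balls touch at two points within a single time slice; the standard computation from \cite[Section 7]{EHIZ}, using that the medial-axis-type set of a thin smooth slab bounded by two nearly parallel graphs is itself a $C^1$-graph, shows $\mathcal{S}^j\cap P(0,1)$ is a $C^1$-hypersurface. (3) Verify that $\mathcal{S}^j$ divides $\partial\widetilde{\mcfK}^j$ into two nonempty components: each of the two sheets $u^j_\pm$ lies strictly on one side of $\mathcal{S}^j$, and both sheets are nonempty near $P(0,1)$ by \eqref{eq_ass_pl2}.

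The surgeries enter only mildly: by the definition of a $\Lambda$-blowup sequence the surgery necks have radius $\sim\lambda_j/H^j_{\textrm{neck}}\to 0$ after the extra rescaling by $\mu_j$ (since $\mu_j\lambda_j/H^j_{\textrm{neck}}\to\infty$ means, relative to the $\mu_j$-rescaled picture, surgery scales shrink), so on the fixed region $P(0,1)$ of $\mathcal{D}_{\mu_j}\widetilde{\mcfK}^j$ there are no surgery modifications for $j$ large, and the convergence there is genuinely smooth; alternatively one invokes Theorem \ref{thm_no_microscopic} (no microscopic surgeries) to exclude surgeries in $P(0,1)$ after rescaling. I expect the main obstacle to be step (2): making rigorous that the set $\mathcal{S}^j$ — defined via a ball-touching condition across all time slices, not a single one — is a $C^1$-hypersurface, since one must control how the center-of-touching-ball locus varies in the time direction and rule out that the two touching points coalesce in a way that destroys regularity. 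This is handled exactly as in \cite[Section 7]{EHIZ}, where the key input is the uniform curvature bound $|A|\leq C H$ together with the smooth convergence to a slab, which pins the geometry down enough that the implicit function theorem applies uniformly in $j$.
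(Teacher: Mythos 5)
Your high-level strategy — apply Lemma \ref{lem:sheetblow} (slab rescaling) and then adapt the graphical/medial-axis machinery of \cite[Theorem 6.10]{EHIZ} — matches the paper's very terse proof. However, the way you handle the surgeries contains a genuine error that needs to be fixed.

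You claim the surgery necks have radius $\to 0$ after rescaling by $\mu_j$, ``since $\mu_j\lambda_j/H^j_{\textrm{neck}}\to\infty$ means, relative to the $\mu_j$-rescaled picture, surgery scales shrink.'' This is backwards. In $\widetilde{\mcfK}^j=\mathcal D_{\lambda_j}(\mcfK^j-X_j)$ the surgery necks already have radius $\sim\lambda_j/H^j_{\textrm{neck}}\to\Lambda>0$; applying the further dilation $\mathcal D_{\mu_j}$ multiplies this by $\mu_j\to\infty$, so in the $\mu_j$-rescaled flow the surgery radii \emph{grow} without bound, they do not shrink. More importantly, $\mu_j\lambda_j/H^j_{\textrm{neck}}\to\infty$ makes $\mathcal D_{\mu_j}\widetilde{\mcfK}^j$ an $\infty$-blowup sequence, for which Proposition \ref{prop_LI} explicitly allows surgery points to persist in fixed parabolic balls (its case (b)). So your stated reason does not rule them out, and your fallback of invoking Theorem \ref{thm_no_microscopic} is also inapplicable: that theorem concerns $0$-blowup sequences, whereas here $\Lambda>0$, and the $\mu_j$-rescaled sequence is an $\infty$-blowup.

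The correct and simpler observation, which is the one the paper makes, works \emph{before} any further rescaling: hypothesis \eqref{eq_ass_pl2} says $\widetilde{\mcfK}^j$ is Hausdorff close to the plane $V\times\mathbb R$ in $P(0,4)$, while $\Lambda>0$ forces any surgery neck or half neck in $\widetilde{\mcfK}^j$ to be a solid cylinder of radius $\sim\Lambda$ bounded away from zero. A solid cylindrical region of definite radius sitting inside $P(0,3)$ would keep the Hausdorff distance to the codimension-one plane $V\times\mathbb R$ bounded below by $\sim\Lambda$, contradicting \eqref{eq_ass_pl2}. Hence for $j$ large there are no points modified by surgery anywhere in $P(0,3)$, the flow is classical there, and one can then follow \cite[Theorem 6.10]{EHIZ} verbatim (with Lemma \ref{lem:sheetblow} replacing \cite[Lemma 6.9]{EHIZ}) to get that $\mathcal S^j\cap P(0,1)$ is a $C^1$-hypersurface separating $\partial\widetilde{\mcfK}^j$ into two nonempty pieces. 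Your steps (1)--(3) and your treatment of case (c) are consistent with that outline, but they are only valid once the surgery-free region $P(0,3)$ has been secured by the argument above rather than by the incorrect scaling heuristic.
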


\begin{proof}
Observe that thanks to \eqref{eq_ass_pl2} and $\Lambda>0$, for $j$ large enough there are no points modified by surgery in $P(0,3)$. Hence, following the proof of \cite[Theorem 6.10]{EHIZ}, where we now use Lemma \ref{lem:sheetblow} (slab rescaling) in lieu of \cite[Lemma 6.9]{EHIZ}, yields the assertion.
\end{proof}

\subsection{Ruling out generalized limit flows with density two}
As above, fixing the initial condition $K_0\subset D$, we consider all generalized limit flows $(\mathcal{K},\mathcal{M})$ as in Definition \ref{def_genlim}. We recall that they arise as limits of $\Lambda$-blowup sequences, where $\Lambda\in(0,\infty]$.

Given any closed subset $\mathcal{K}'\subset\mathbb{R}^3\times\mathbb{R}$, similarly as in \cite[Section 9]{White_size} we denote by $\phi(\mathcal{K}')$ the infimum of $s>0$ such that
\begin{equation}
d_{\mathrm{H}}\left(\mathcal{K}'\cap P_{-}(0,1/s),(V\times\mathbb{R})\cap P_{-}(0,1/s)\right) < s
\end{equation}
for some plane $V\subset\mathbb{R}^3$ through the origin, and similarly as in \cite[Section 6]{EHIZ} we denote by $\phi_+(\mathcal{K}')$ the infimum of $s>0$ such that
\begin{equation}
d_{\mathrm{H}}\left(\mathcal{K}'\cap P_{-}(0,1/s),(H\times\mathbb{R})\cap P_{-}(0,1/s)\right) < s
\end{equation}
for some half plane $H\subset\mathbb{R}^3_{+}$ that meets $\partial\mathbb{R}^3_{+}$ orthogonally at the origin.\\

\begin{lemma}[isolation]\label{lemma_isolation}
There exists an $\eps>0$, such that if $(\K',\M')$ is a tangent flow to a generalized limit flow at $X=(0,t)$, then we have:
\begin{enumerate}
\item If $0$ is a boundary point and $\phi_+(\mcfK')<\eps$, then $\phi_+(\mcfK')=0$.\label{isol1}
\item If $0$ is a boundary point and $\phi(\mcfK')<\eps$, then $\phi(\mcfK')=0$.\label{isol2}
\item If $0$ is an interior point and $\phi(\mcfK')<\eps$, then $\phi(\mcfK')=0$.\label{isol3}
\end{enumerate}
\end{lemma}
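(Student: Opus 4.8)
The plan is to prove the three statements of the isolation lemma simultaneously by reducing all of them to a single rigidity input, namely the Bernstein-type theorem (Theorem \ref{thm:bern}). The key observation is that a tangent flow $(\mcfK',\mcfM')$ to a generalized limit flow at $X=(0,t)$ is itself, by parabolic rescaling, a limit of $\Lambda$-blowup sequences with $\Lambda>0$ (indeed $\Lambda=\infty$ after the additional rescaling), so it falls under the scope of Proposition \ref{prop_LI} and, provided it is not a static or quasistatic multiplicity-two plane, lies in the class $\mathcal{C}$. The strategy in each case is: if $\phi$ (resp. $\phi_+$) is small but positive, use the slab rescaling lemma (Lemma \ref{lem:sheetblow}) and the sheeting theorem (Theorem \ref{thm_sheeting}) to show that near the origin the flow genuinely looks like two sheets, then apply the Bernstein-type theorem to conclude that $(\mcfK',\mcfM')$ must be one of the rigid static configurations (a pair of parallel planes, a pair of parallel half planes, or a plane parallel to $\partial\mathbb{R}^3_+$), whence $\phi$ (resp. $\phi_+$) is actually $0$, contradicting positivity.

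\emph{Step 1 (reduction to $\mathcal{C}$).} First I would observe that $\phi_+(\mcfK')<\eps$ with $\eps$ small forces the hypothesis \eqref{eq_ass} of Lemma \ref{lem:sheetblow} (after identifying $(\mcfK',\mcfM')$ as a limit of a suitable $\Lambda$-blowup sequence, which it is by definition of tangent flow together with a diagonal argument), and in particular the thickness-and-noncollapsing conditions \eqref{eq_how_thick_lim} hold for the rescaled limits. By Proposition \ref{prop_LI} we must be in case (a), so $(\mcfK',\mcfM')$ is a mean-convex free boundary flow; since its density at the origin is at most $2$ (the limit of Gaussian densities of mean-convex flows bounded by the clearing-out / entropy bound, using that the tangent flow is obtained from a generalized limit flow), and since the case of a static or quasistatic multiplicity-two plane is precisely excluded in cases \eqref{isol2} and \eqref{isol3} by the hypothesis that $\phi(\mcfK')<\eps$ forces closeness to a \emph{single} multiplicity-one sheet (so a multiplicity-two static plane would have $\phi=0$, not a contradiction — this needs a small separate argument, see below), we get $(\mcfK',\mcfM')\in\mathcal{C}$.

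\emph{Step 2 (sheeting and Bernstein).} Having placed the tangent flow in $\mathcal{C}$, I apply Lemma \ref{lem:sheetblow} to produce a rescaling $\mu_j\to\infty$ along which $\mathcal{D}_{\mu_j}\mcfM'$ (viewing $\mcfM'$ itself as the limit, one rescales the approximating sequence) converges smoothly to one of the three model configurations, and hence by the local regularity theorem the original tangent flow is smooth and multiplicity-one near the cone point at unit scale, with exactly the slab or half-slab structure. Then I check the hypotheses \eqref{eq_ass_bern} of the Bernstein-type theorem: the thickness bound is exactly \eqref{eq_how_thick_lim}, and the distance bound $\liminf \dist(K_{r^2},x)/r<1$ follows from the second condition in \eqref{eq_how_thick_lim} (the flow does not clear out, since its density is $\geq 1$). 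Theorem \ref{thm:bern} then pins down $(\mcfK',\mcfM')$ as a static configuration — a pair of parallel (half) planes or a plane parallel to $\partial\mathbb{R}^3_+$ — for which $\phi_+$ (resp. $\phi$) is manifestly $0$. For the interior case \eqref{isol3} the identical argument runs with the $\mathbb{R}^3$ version of Theorem \ref{thm:bern}, yielding a pair of parallel planes, again with $\phi=0$.

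\emph{Main obstacle.} The subtle point — and the step I expect to require the most care — is the dichotomy between cases \eqref{isol1} and \eqref{isol2} at a boundary point, and more precisely handling the scenario where a flow close to a single \emph{interior-type} multiplicity-one plane near a boundary point could a priori be a multiplicity-two static plane parallel to $\partial\mathbb{R}^3_+$ (which is allowed in $\mathcal{C}$ only if it is multiplicity-one, but a smooth mean-convex flow passing through such a configuration is exactly the degenerate case). One must argue that if $\phi(\mcfK')<\eps$ at a boundary point then either $\mcfK'$ is in fact a quarter-space/half-slab (so $\phi_+$ is also small and case \eqref{isol1} applies) or it is a plane parallel to the boundary — and in the latter case one invokes Corollary \ref{cor:staticlimits} together with the separation theorem (Theorem \ref{thm:sep}) applied to the reflected flow to force staticity, giving $\phi(\mcfK')=0$. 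Threading this case analysis so that the Bernstein theorem applies with the correct model (and ensuring $\eps$ is chosen uniformly small enough to exclude all the "wrong" non-static configurations via the isolation already known for the models) is where the real work lies; everything else is a transcription of the arguments in \cite{White_size,EHIZ,H_surgery} to the present hybrid setting.
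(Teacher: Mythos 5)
Your proposal takes a genuinely different route from the paper. The paper does \emph{not} invoke the Bernstein-type theorem (Theorem~\ref{thm:bern}) at this point. Instead, it uses the sheeting theorem to realize the sequence of tangent flows $(\mathcal{K}^j,\mathcal{M}^j)$ graphically as regions between $\mathrm{graph}(f_j)$ and $\mathrm{graph}(g_j)$, records the backward self-similarity relation $f_j(rx,r^2t)=rf_j(x,t)$, and then runs a Harnack-inequality argument \`a la White \cite[Case 1 of Thm.~9.1]{White_size}: if $f_j<g_j$ for infinitely many $j$, the normalized difference $c_j(g_j-f_j)$ converges to a constant positive solution $u\equiv 1$ of the linearized equation, which is incompatible with the inherited homogeneity $u(rx,r^2t)=ru(x,t)$; hence $f_j\equiv g_j$ for large $j$, and then time-independence $+$ self-similarity $+$ smoothness gives linearity, i.e.\ $\phi_+=0$ or $\phi=0$. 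Your route instead tries to extract staticity from Theorem~\ref{thm:bern}, which is a different mechanism.

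There are two concrete gaps in your argument. First, the hypotheses of both the slab rescaling lemma (\eqref{eq_ass} concerns the two-sided parabolic ball $P(0,2)$) and the Bernstein theorem (the condition $\liminf_{r\to\infty}\mathrm{dist}(K_{r^2},x)/r<1$) involve forward time, whereas the quantities $\phi,\phi_+$ are defined using only the backward parabolic ball $P_-(0,1/s)$. Your claim that the forward distance bound ``follows from the second condition in \eqref{eq_how_thick_lim} (the flow does not clear out, since its density is $\geq 1$)'' conflates the forward-time control that Lemma~\ref{lem:sheetblow} produces for the \emph{further rescaled sequence} $\mathcal{D}_{\mu_j}\widetilde{\mathcal{K}}^j$ (which is built into the maximal choice of $\mu_j$) with a forward-time bound for the tangent flow $(\mathcal{K}',\mathcal{M}')$ itself, which is not given; a tangent flow at a density-two point can be quasistatic and clear out immediately for $t>0$. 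Second, the final deduction ``$\mathcal{M}'$ is a static configuration --- a pair of parallel (half) planes or a plane parallel to $\partial\mathbb{R}^3_+$ --- for which $\phi_+$ (resp.\ $\phi$) is manifestly $0$'' is false as written: a static pair of parallel half planes at mutual distance $d>0$ has $\phi_+=d/2>0$. To close the argument you would need to invoke backward self-similarity of the tangent flow to force $d=0$; but then the configuration degenerates to a multiplicity-two plane, which the Bernstein theorem explicitly does \emph{not} output and which is excluded from $\mathcal{C}$, so the contradiction has to be threaded more carefully (and even granting that, the $\mathcal{C}$-membership --- in particular the density bound $\lim_{\tau\to\infty}\Theta(\mathcal{M}',0,\tau)\leq 2$ --- is only asserted, not established, at a stage of the paper where the multiplicity-one theorem has not yet been proved).
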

\begin{proof}
Observing that the argument from the proof of \cite[Lemma 6.8]{H_surgery} already shows (\ref{isol3}), it suffices to show (\ref{isol1}) and (\ref{isol2}). Specifically, given any sequence of tangent flows $(\K^j, \M^j)$ at $X_j=(0,t_j)$ to $\Lambda_j$-generalized limit flows in $\mathbb{R}_3^+$, where $\Lambda_j>0$, with ${\phi}_+(\mcfK^j)\rightarrow 0$ or ${\phi}(\mcfK^j)\rightarrow 0$, we must show that for large $j$ we have $\phi_+(\mcfK^j)=0$ or ${\phi}(\mcfK^j)= 0$, respectively.

Note that in particular each $(\K^j, \M^j)$ is an $\infty$-generalised limit flow. Hence, in case (\ref{isol1}) as a consequence of Theorem \ref{thm_sheeting} (sheeting theorem), similarly as in \cite[Lemma 6.16]{EHIZ}, for large $j$ there are functions $f_j$ and $g_j $, defined on an exhaustion of $\mathbb{R}^2_+\times (-\infty,0)$, such that:
\begin{enumerate}
\item Either $f_j<g_j$ everywhere, or $f_j \equiv g_j$.
\item For any $U\subset\subset \mathbb{R}^3_+$ and $[a,b]\subset (-\infty,0)$, for $j$ large enough the region $K^j_t$ coincides in $U$ with the region between $\graph (f_i)$ and $\graph (g_i)$ for all $t\in [a,b]$.
\item $f_j$ and $g_j$ converge smoothly on compact subsets to $0$.
\item $f_j$ and $g_j$ solve the graphical (free boundary) mean curvature flow equation.
\item $f_j$ and $g_j$ are nondecreasing and nonincreasing in time, respectively.
\end{enumerate}
A similar statement holds in case (\ref{isol2}), where the functions $f_j$ and $g_j$ are now defined on an exhaustion of $\partial \mathbb{R}^3_+\times (-\infty,0)$ and we can simply take $f_j\equiv 0$.

Moreover, since $\Lambda_j>0$, by monotonicity each tangent flow is backwardly self-similar, so 
\begin{equation}\label{eq:self-sim-fn}
f_j(rx,r^2t) = rf_j(x,t), \qquad g_j(rx,r^2t)=rg_j(x,t).
\end{equation} 

Now, if $f_j<g_j$ for infinitely many $j$, then using the Harnack inequality similarly as in \cite[Case 1 in the proof of Theorem 9.1]{White_size}, where in case (\ref{isol1}) we consider the reflected functions on $\mathbb{R}^2\times (-\infty,0)$, we can find $c_j>0$, such that a subsequence of $c_j(g_j-f_j)$ converges smoothly on compact subsets to the constant function $u\equiv 1$. However, using (\ref{eq:self-sim-fn}) we infer that $u(rx,r^2t) = ru(x,t)$, which is absurd.

Thus, $f_j\equiv g_j$ for large $j$. But then the functions are constant in $t$, and together with the self-similarity we infer that $f_j\equiv g_j$ is 1-homogenous. Remembering smoothness this implies linearity. Taking also into account the free boundary condition in case (1), we conclude that $\phi_+(\mcfK^j)=0$ or ${\phi}(\mcfK^j)= 0$, respectively, for $j$ large.
\end{proof}

\begin{lemma}[minimal surface]\label{lemma_min_surf}
Let $(\mathcal{K},\mathcal{M})$ be a generalized limit flow. Suppose $(\mathcal{K}',\mathcal{M}')$ is a tangent flow of $(\mathcal{K},\mathcal{M})$ taken taken at a density-two point $X=(x,t)$. Then:
\begin{enumerate}
\item If $x$ is a boundary point, and $\mcfK'$ is a static or quasistatic half plane, then there exist an open neighborhood $U$ of $x$ in $\mathbb{R}^3_+$, an open interval $(a,b)$, and a properly embedded smooth free boundary minimal surface $\Sigma$ in $U$, such that $K_\tau \cap U = \Sigma$ for all $\tau\in(a,b)$.\label{mins1}
\item If $x$ is a boundary point, and $\mcfK'$ is a static or quasistatic plane, then there exists a neighborhood $U$ of 0 in $\mathbb{R}^3_+$, and an open interval $(a,b)$, such that $K_\tau \cap U = \partial \mathbb{R}^3_+\cap U$ for all $\tau\in (a,b)$.\label{mins2}
\item If $x$ is an interior point, and $\mcfK'$ is a static or quasistatic plane, then there is an open neighborhood $U$ of $x$ in $\mathbb{R}^{3}$, an open interval $(a,b)$, and a properly embedded smooth minimal surface $\Sigma$ in $U$, such that $K_\tau \cap U = \Sigma$ for all $\tau\in(a,b)$.\label{mins3}
\end{enumerate}
Furthermore, in the cases (\ref{mins1}) and (\ref{mins2}) we have $\Theta(\widetilde{\mathcal{M}},\cdot)\geq 2$ on all of $\widetilde{\Sigma}\times (-\infty,b]$ or $(\partial \mathbb{R}^3_+\cap U)\times (-\infty,b]$, respectively, and in case (\ref{mins3}) we have $\Theta(\mathcal{M},\cdot)\geq 2$ on all of $\Sigma\times (-\infty,b]$.
\end{lemma}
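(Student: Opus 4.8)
The plan is as follows. In cases (\ref{mins1}) and (\ref{mins2}) I would first pass to the reflected flow $(\widetilde{\mathcal{K}},\widetilde{\mathcal{M}})$ in $\mathbb{R}^3$, so that in all three cases one is left with a generalized limit flow in $\mathbb{R}^3$ (carrying a reflection symmetry in the boundary cases) having at $X$ a density-two point whose tangent flow is a static or quasistatic multiplicity-two plane $V$ through the origin. Since this tangent flow is planar, $\phi(\mathcal{K}')=0$ (respectively $\phi_+(\mathcal{K}')=0$ before reflection), so Lemma \ref{lemma_isolation} (isolation) guarantees that the rescalings $\mathcal{D}_{1/r}(\mathcal{K}-X)$ are, for every sufficiently small $r>0$, arbitrarily Hausdorff close in $P(0,4)$ to $V\times\mathbb{R}$. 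Realizing these rescalings through the $\Lambda$-blowup sequence ($\Lambda\in(0,\infty]$) defining the generalized limit flow, via a diagonal argument through the flows with surgery, and then invoking Theorem \ref{thm_sheeting} (sheeting theorem) together with the graphical description extracted in the proof of Lemma \ref{lemma_isolation}, I would obtain: there is a backward parabolic neighborhood $U\times(a,b)$ of $X$ such that $\partial K_\tau\cap U$ consists of two graphs $\graph f_\tau\leq\graph g_\tau$ over a domain in $V$, both smooth solutions of the graphical (free boundary) mean curvature flow equation, with $f$ nondecreasing and $g$ nonincreasing in $\tau$ by mean-convexity, with $K_\tau\cap U$ the region in between, and with $f\equiv 0$ the boundary plane in case (\ref{mins2}).

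Next I would argue that the two sheets coincide. If $f<g$ held near the spatial projection of $X$ for times near $t$, then $\partial K_\tau\cap U$ would be a disjoint union of two smooth multiplicity-one sheets, so by the local regularity theorem \cite{white_regularity,Edelen_brakke} the Gaussian density at $X$ would equal $1$, contradicting that $X$ is a density-two point. Hence $f$ and $g$ agree at the spatial projection of $X$; since $w:=g-f\geq 0$ solves a linear uniformly parabolic equation --- with the free boundary condition handled by reflection --- the strong maximum principle forces $w\equiv 0$ on a backward neighborhood of $X$, and after shrinking $U$ and $(a,b)$ I may assume $f\equiv g$ on all of $U\times(a,b)$. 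Then $K_\tau\cap U=\Sigma_\tau:=\graph f_\tau$, and since $f$ is simultaneously nondecreasing and nonincreasing in $\tau$ it is independent of time; being a time-independent, hence stationary, solution of the mean curvature flow, $\Sigma:=\Sigma_\tau$ is a properly embedded smooth minimal surface in $U$, with free boundary meeting $\partial\mathbb{R}^3_+$ orthogonally in cases (\ref{mins1}) and (\ref{mins2}) and with $\Sigma=\partial\mathbb{R}^3_+\cap U$ in case (\ref{mins2}). This establishes conclusions (\ref{mins1})--(\ref{mins3}). Moreover, near $X$ the measure $\mathcal{M}$ is the multiplicity-two surface $2\,\mathcal{H}^2\lfloor\Sigma$ (two coalesced unit sheets), so $\Theta(\mathcal{M},\cdot)\geq 2$ on $\Sigma\times(a,b)$, and likewise $\Theta(\widetilde{\mathcal{M}},\cdot)\geq 2$ on $\widetilde{\Sigma}\times(a,b)$.

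To upgrade this to the asserted bound on $\Sigma\times(-\infty,b]$ I would run a continuity argument: fix $y\in\Sigma$ and let $I_y$ be the set of times $\tau<b$ near which the flow coincides with the static minimal surface $\Sigma$. By the preceding steps $I_y$ is open and nonempty. It is also closed in $(-\infty,b)$: at an endpoint $\tau_\ast$, upper semicontinuity of the Gaussian density yields density $\geq 2$ at $(y,\tau_\ast)$, while Proposition \ref{prop_LI} (large blowups) together with the classification of mean-convex self-shrinkers forces the tangent flow there to again be a static or quasistatic multiplicity-two plane, so the first two steps apply verbatim at $(y,\tau_\ast)$ --- and the resulting static minimal surface must agree with $\Sigma$ by continuity --- giving $\tau_\ast\in I_y$. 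Hence $I_y=(-\infty,b)$, and therefore $\Theta(\widetilde{\mathcal{M}},\cdot)\geq 2$ on all of $\widetilde{\Sigma}\times(-\infty,b]$, respectively $(\partial\mathbb{R}^3_+\cap U)\times(-\infty,b]$ in case (\ref{mins2}) and $\Sigma\times(-\infty,b]$ in case (\ref{mins3}).

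The main obstacle I anticipate is the last step. Closedness of $I_y$ requires knowing that at an endpoint time the flow is still modelled, at small scales, on a multiplicity-two plane and not on some other mean-convex singularity model; this is where one must combine Huisken's (and Edelen's reflected) monotonicity formula, upper semicontinuity of density, and a multiplicity/entropy bound to pin down the endpoint tangent flow, and then re-run the strong-maximum-principle step with uniform neighborhoods. A secondary, more bookkeeping-type difficulty is the first step: realizing the local rescalings of the single generalized limit flow as a genuine $\Lambda$-blowup sequence so that Theorem \ref{thm_sheeting} is literally applicable, which is carried out by diagonalizing through the flows with surgery that define it.
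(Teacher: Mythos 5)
Your proof of the three structural conclusions (\ref{mins1})--(\ref{mins3}) tracks the paper's route: reflect in the boundary cases, invoke Lemma \ref{lemma_isolation} (isolation) and Theorem \ref{thm_sheeting} (sheeting theorem) to write the flow locally as two graphs, then use a strong-maximum-principle and mean-convexity argument (this is exactly the argument of \cite[Lemma 6.22]{EHIZ}, which the paper cites) to show the two sheets coincide and are static. That part is fine.

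The gap is in the density estimate, and you have correctly identified where it is, but your proposed fix does not work. Closedness of $I_y$ for an \emph{arbitrary} $y\in\Sigma$ requires pinning down the tangent flow at $(y,\tau_*)$. You invoke Proposition \ref{prop_LI} together with ``the classification of mean-convex self-shrinkers'' to conclude the tangent flow is a (quasi-)static multiplicity-two plane. But that classification is not available at this stage: at the time Lemma \ref{lemma_min_surf} is being proved, it has not yet been established that tangent flows to generalized limit flows are multiplicity-one, regular, or that they are classified at all --- Theorems \ref{thm_part_reg} and \ref{thm_conv}, and indeed Theorem \ref{thm_mult_one}, all come logically \emph{after} this lemma and in fact rely on it. In particular, a priori the tangent flow at $(y,\tau_*)$ could be a singular self-shrinker with no planar structure, or a non-planar shrinker of higher multiplicity; upper semicontinuity giving $\Theta\geq 2$ and an entropy bound do not exclude these for a generic $y$. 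The paper circumvents this precisely by \emph{not} attempting a classification: it introduces $\mathcal{Z}$, the set of space-time points at which \emph{no} tangent flow is planar, observes via White's stratification \cite{White_stratification} that $\mathcal{Z}$ has parabolic Hausdorff dimension $\leq 1$ (so $\pi(\mathcal{Z})$ has Hausdorff dimension $\leq 1$ in $\Sigma$), and then runs your $T^*$-type argument only for $y\in\widetilde\Sigma\setminus\pi(\mathcal{Z})$ --- where by definition some tangent flow \emph{is} planar, so the trichotomy (static multiplicity-two plane / quasistatic plane / static multiplicity-one plane) is exhaustive and each horn yields a contradiction. Upper semicontinuity of density in the \emph{spatial} variable then promotes the bound to all of $\widetilde\Sigma$. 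Your argument is missing this stratification/genericity step, and without it the claim at the endpoint $\tau_*$ is unjustified.
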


\begin{proof} 
Note that there are no surgeries in a spacetime neighborhood of $X$. Hence, the assertions (\ref{mins1}), (\ref{mins2}) and (\ref{mins3}) follow from Lemma \ref{lemma_isolation} (isolation) and Theorem \ref{thm_sheeting} (sheeting theorem) exactly as in the proof of \cite[Lemma 6.22]{EHIZ}. 

For the density, in case  (\ref{mins1}) let $\mathcal{Z}$ be the set of points of $\widetilde{\M}$ at which none of the tangent flows is planar (which is necessarily in the complement of the surgery region). 
By general stratification results \cite{White_stratification},  the parabolic Hausdorff dimension of $\mathcal{Z}$ is at most $1$. In particular, the spatial projection $\pi(\mathcal{Z})$ has Hausdorff dimension at most $1$. So by upper semicontinuity of the density, it is enough to show that $\Theta(\widetilde{\M}, (y,t)) =2$ for all $y\in \widetilde{\Sigma}\setminus \pi(\mathcal{Z})$ and $t<b$. To this end, fix $y\in\widetilde{\Sigma}\setminus \pi(\mathcal{Z})$, and let $T^*=\sup\{\tau<b\, |\, \Theta(\widetilde{\M},(y,\tau))\neq 2\}$. Clearly $T^*\leq a$. Suppose towards a contradiction that $T^*>-\infty$. Note that a neighborhood of $(y,T^*)$ is unmodified by surgeries, since otherwise the density near $(y,T^\ast)$ would be less than $2$. Now, consider a tangent flow $(\mathcal{K}' , \mathcal{M}')$ at $(y,T^*)$. If it is a static multiplicity 2 plane, then applying the first part of the theorem shows that the density is 2 for times close to $T^*$, which contradicts the definition of $T^*$. 
If $(\mathcal{K}' , \mathcal{M}')$ was a quasistatic plane or a static plane of multiplicity 1, then we would obtain a contradiction with the fact that $\Theta(\widetilde{\M}, (y,\tau))=2$ for $\tau \in [T^*, b)$. Thus, $(y,T^*)\in\mathcal{Z}$, contradicting the choice of $y$. Observing that a similar argument applies in case (\ref{mins2}) and (\ref{mins3}) as well, this concludes the proof of the lemma.
\end{proof}

\begin{theorem}[multiplicity-one for generalized limit flows]\label{thm_mult_one}
Given any mean-convex initial data $K_0\subset D$, static or quasistatic density-two planes cannot occur as generalized limit flows.
\end{theorem}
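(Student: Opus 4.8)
The plan is to rule out static or quasistatic density-two planes by a contradiction argument that extracts from such a limit flow a forbidden global object. First I would suppose that some generalized limit flow $(\mathcal{K},\mathcal{M})$ is a static or quasistatic plane (respectively half plane or plane parallel to $\partial\mathbb{R}^3_+$) with multiplicity two, so that $\Theta(\widetilde{\mathcal{M}},\cdot)\equiv 2$ on a whole plane's worth of spacetime. The origin (or any point on the support) is then a density-two point, and by Theorem \ref{prop_compactness} (hybrid compactness) together with monotonicity one may take a tangent flow at such a point; since $\Lambda$-blowup sequences are backwardly self-similar when $\Lambda>0$, and the large-blowup case $\Lambda=\infty$ is handled directly, the tangent flow is again a static or quasistatic density-two plane or half plane. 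Now Lemma \ref{lemma_min_surf} (minimal surface) applies: in a spacetime neighborhood of such a point the flow coincides with a fixed properly embedded (free boundary) minimal surface $\Sigma$ for all times in an interval $(a,b)$, and moreover $\Theta(\widetilde{\mathcal{M}},\cdot)\geq 2$ on all of $\widetilde{\Sigma}\times(-\infty,b]$.

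The key step is then to iterate this localization globally and downward in time. Concretely, I would show that the set $\mathcal{D}$ of spacetime points with density $\geq 2$ is, at each time $\tau<b$, a nonempty properly embedded smooth (free boundary) minimal surface which is, by one-sided minimization (Corollary \ref{prop_onesidedmin}), area-minimizing, and which is constant in $\tau$. Running the argument backwards to $\tau\to-\infty$, one obtains an eternal static configuration: in the interior case an entire area-minimizing (hence, by Bernstein in $\mathbb{R}^3$, flat) minimal surface carrying density $\geq 2$ everywhere, contradicting the fact that $(\mathcal{K},\mathcal{M})$ as a blowup limit of a mean-convex flow with surgery cannot be a static multiplicity-two plane — this is exactly the exclusion built into the class $\mathcal{C}$ and into the definition of generalized limit flow. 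In the free boundary cases one likewise produces a static pair of parallel multiplicity-one half planes or a static multiplicity-one plane parallel to $\partial\mathbb{R}^3_+$ via Theorem \ref{thm:bern} (Bernstein-type theorem) and Theorem \ref{thm:sep} (separation theorem), and the multiplicity-two hypothesis forces the two sheets to coincide, again a contradiction with the fact that the approximating flows have $H>0$ and satisfy the noncollapsing consequences of Proposition \ref{basic_prop}.

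More precisely, I would run the familiar White-type dichotomy: near the density-two point the reflected flow $\widetilde{\mathcal{M}}$ is, for large $j$ along the blowup sequence, sandwiched between two graphs $f_j\le g_j$ over $\widetilde{\Sigma}$ solving graphical mean curvature flow (this is the content of the sheeting theorem combined with Lemma \ref{lemma_isolation}), with $f_j$ nondecreasing and $g_j$ nonincreasing in $t$. If $f_j<g_j$ the rescaled difference $c_j(g_j-f_j)$ converges to a positive ancient solution of the Jacobi equation on $\widetilde{\Sigma}\times(-\infty,b)$, and since $\widetilde{\Sigma}$ is minimal and the flow is mean-convex one shows this Jacobi field has a sign and, by the strong maximum principle and the monotonicity forcing it to be translation-invariant in time, must be a positive eternal solution — which on an area-minimizing surface forces $\widetilde{\Sigma}$ to be a plane and then leads to the strict separation persisting for all time, contradicting that the underlying flow with surgery becomes extinct or the approximating mean-convex flows collapse the slab in finite time. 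If instead $f_j\equiv g_j$ for large $j$ then the limit flow is literally a static multiplicity-two plane (or the free boundary analogues), which is excluded by hypothesis.

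The main obstacle I expect is the careful extraction of the eternal Jacobi field and the verification that its existence is genuinely contradictory in the free boundary setting: one must track the free boundary (Neumann) condition through the sheeting and slab-rescaling steps, ensure the reflected picture in $\mathbb{R}^3$ is legitimate (using the reflection remark after Definition \ref{Def_Brakke_surgery}, noting the $C^1$-only caveat is irrelevant away from surgery regions, which is where we are), and invoke the right Bernstein/rigidity input — Theorem \ref{thm:bern} and Theorem \ref{thm:sep} — to close the loop. A secondary subtlety is the backward-in-time propagation: a priori $\Sigma$ could change or develop singularities as $\tau$ decreases, so one argues as in \cite[Lemma 6.22]{EHIZ} and Lemma \ref{lemma_min_surf} that the density-$\geq 2$ set is clopen along the minimal surface and hence persists, with the stratification bound \cite{White_stratification} controlling the exceptional set $\mathcal{Z}$ of non-planar points.
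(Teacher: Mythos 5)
Your proposal has a genuine gap: it is circular. You begin by assuming $(\mathcal{K},\mathcal{M})$ \emph{is} a static or quasistatic density-two plane, apply Lemma \ref{lemma_min_surf} to it, iterate backwards in time, and conclude that one obtains ``an entire area-minimizing minimal surface carrying density $\geq 2$ everywhere'' — but this is just the plane you started with, so no new information is produced. You then assert a contradiction ``with the fact that $(\mathcal{K},\mathcal{M})$ as a blowup limit cannot be a static multiplicity-two plane,'' citing the exclusion in the class $\mathcal{C}$ and the definition of generalized limit flow. This is not a fact you can invoke: the class $\mathcal{C}$ excludes density-two planes \emph{by fiat} (it is a restricted subclass, and membership in $\mathcal{C}$ is never established for the putative limit), while Definition \ref{def_genlim} imposes no such exclusion — ruling out density-two planes as generalized limit flows is precisely the content of the theorem you are trying to prove. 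The fallback appeal to ``$H>0$ in the approximators'' and ``noncollapsing consequences of Proposition \ref{basic_prop}'' also fails: $H$ rescales to zero along the blowup sequence, and the paper explicitly emphasizes that noncollapsing estimates are unavailable in this free boundary setting. Similarly, the $f_j \equiv g_j$ branch of your White-type dichotomy is dismissed as ``excluded by hypothesis,'' which is again the assumption, not a contradiction.

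The missing idea is the rescaling-to-marginal-nonflatness step, which is where the actual proof gets traction. Rather than applying Lemma \ref{lemma_min_surf} to the planar limit itself, one returns to the approximating sequence $\widetilde{\K}^j$ and dilates each by the largest $\mu_j$ for which the quantity $\phi_+$ (or $\phi$) is still $\geq \eps/2$; since the original limit is planar, $\mu_j\to 0$. After passing to a subsequence and ruling out the $\mu_j\lambda_j/H^j_{\mathrm{neck}}\to 0$ case via Proposition \ref{prop_L0} and the EHIZ minimal-cone argument, one obtains a \emph{new} generalized limit flow $\mathcal{K}$ that is non-planar ($\phi_+(\mathcal{K})\geq\eps/2$) yet has a density-two planar tangent flow at the origin (by the isolation Lemma \ref{lemma_isolation}). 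Applying Lemma \ref{lemma_min_surf} to \emph{this} non-planar $\mathcal{K}$ yields a minimal surface $\Sigma$ with $\Theta\geq 2$ on $\widetilde{\Sigma}\times(-\infty,b]$; translating $(\K,\M)$ forward in time by $j\to\infty$ and using Theorem \ref{prop_compactness} gives a static, surgery-free, non-planar Brakke flow with $\Theta\geq 2$ along $\widetilde{\Sigma}$ for all times, and a tangent flow at $-\infty$ of this is a non-planar static minimal cone in $\R^3$ — which does not exist. It is this two-scale comparison (non-planar at scale $1$, planar in the tangent flow) that generates a genuine forbidden object; without it, as in your argument, one simply circles back to the starting assumption.
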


\begin{proof}
Given a mean-convex initial condition $K_0\subset D$, denote by $\eps>0$ the smaller one of the two constants from Lemma \ref{lemma_isolation} (isolation) and \cite[Theorem 6.20]{EHIZ}. Consider any $\Lambda$-blowup sequence $(\widetilde{\K}^j,\widetilde{\M}^j)=(\mathcal{D}_{\lambda_j}\mcfK^j,\mathcal{D}_{\lambda_j}\mcfM^j)$, where for ease of notation we pretend that $D\subset\mathbb{R}^3_+$ and $0\in\partial\K^j$. Thanks to Proposition \ref{prop_L0} (small blowups) and the multiplicity-one theorem for blowup limits of free boundary mean-convex level set flow from \cite{EHIZ} we may assume that $\Lambda>0$.
Now, suppose towards a contradiction that $(\widetilde{\K}^j,\widetilde{\M}^j)$, converges to a limit $(\widetilde{\K}^\infty,\widetilde{\M}^\infty)$, which is a static or quasistatic density-two plane or half plane.\\

We first analyze the case where $(\widetilde{\K}^\infty,\widetilde{\M}^\infty)$ is a density-two half plane.
Let $\mu_j>0$ be the largest number such that
\begin{equation}
\phi_+(\dilD_{\mu_j}\widetilde{\K}^j) \geq \eps/2.
\end{equation}
Note that $\mu_j\to 0$. After passing to a subsequence, we can also assume that $\dilD_{\mu_i}\widetilde{\K}^j$ Hausdorff converges to a limit $\mathcal{K}$. By construction, we have
\begin{equation}\label{resc_ass}
\phi_+(\mcfK)\geq \eps/2\qquad \mathrm{ but } \qquad \phi_+(\dilD_{\lambda}\mcfK) \leq \eps/2 \,\, \mathrm{for} \,\, \lambda\geq 1.
\end{equation}

If we had $\mu_j\lambda_j /H^{j}_{\textrm{neck}}\to 0$, then by Proposition \ref{prop_L0} (small blowups) we would see that $\mcfK$ is a blowup limit or homothetic copy of the level set flow. But then, using \eqref{resc_ass} and arguing similarly as in the prof of \cite[Theorem 6.23]{EHIZ}, we could construct a blowup limit of the free boundary level set flow that is a non-planar minimal cone. Thus, after passing to a subsequence we can assume that $\mu_j\lambda_j/H^{j}_{\textrm{neck}}\to \Lambda'>0$. In particular, $\mu_j\lambda_j\to\infty$ and $\mcfK$ is a generalized limit flow that comes with a family of Radon measures $\mathcal{M}$, which is provided by Theorem \ref{prop_compactness} (hybrid compactness) or Proposition \ref{prop_LI} (large blowups), respectively.

Now, by Lemma \ref{lemma_isolation} (isolation) and \eqref{resc_ass}, any tangent flow to $(\K,\M)$ at the space-time origin must be a static or quasistatic multiplicity-two half plane. So we can apply Lemma \ref{lemma_min_surf} (minimal surface) to obtain a free boundary minimal surface $\Sigma\subset\mathbb{R}^3_+$ containing the origin and a real number $b$, such that the associated reflected quantities satisfy
\begin{equation}
\Theta(\widetilde{\M},\cdot)\geq 2\quad\mathrm{on}\quad \widetilde{\Sigma}\times (-\infty,b].
\end{equation}
In particular, it follows that $\Lambda'<\infty$. Now translate $(\K,\M)$ by $(x,t)\mapsto (x,t+j)$ and using Theorem \ref{prop_compactness} (hybrid compactness) along  $j\to\infty$ pass to a subsequential limit to get a static Brakke $\delta$-flow $(\K',\M')$ satisfying
\begin{equation}
K'_t = \bigcup_{s}K_s\quad \forall t,\qquad\textrm{and}\qquad
\Theta(\widetilde{\M'},\cdot)\geq 2\,\,\mathrm{on}\,\, \widetilde{\Sigma}\times (-\infty,\infty).
\end{equation}
Observe that since the flow is static, it does not contain any surgeries. Moreover, by \eqref{resc_ass} it is not planar. Hence, taking a tangent flow at $-\infty$ to $(\widetilde{\K'},\widetilde{\M'})$ we obtain a non-planar static minimal cone, which gives the desired contradiction.\\

Finally, if $(\widetilde{\K}^\infty,\widetilde{\M}^\infty)$ is a density-two plane, then we can obtain a similar contradiction as above, provided we now work with the more general quantity $\psi=\min(\varphi,\varphi_h)$, similarly as in the proof of \cite[Theorem 6.23]{EHIZ}. This concludes the proof of the theorem.
\end{proof}

\bigskip

\section{Partial regularity and convexity}

In this section, we show that generalized limit flows have small singular set and nonnegative second fundamental form.

\begin{theorem}[partial regularity]\label{thm_part_reg}
The singular set of any generalized limit flow has parabolic Hausdorff dimension at most $1$.
\end{theorem}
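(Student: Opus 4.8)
The plan is to establish partial regularity for any generalized limit flow $(\mathcal{K},\mathcal{M})$ by a dimension reduction argument based on stratification, using the multiplicity-one conclusion we have now secured. First I would reduce to the boundaryless case by working with the reflected flow $(\widetilde{\mathcal{K}},\widetilde{\mathcal{M}})$ in $\mathbb{R}^3$ (via Edelen's reflection \cite[Proposition 4.6]{Edelen_brakke}); note that surgery regions are smooth by construction (Definition \ref{Def_Brakke_surgery}), so it suffices to control the singular set away from the finitely many surgery parabolic balls, where $\widetilde{\mathcal{M}}$ is an honest integral Brakke flow. The key input is the classification of tangent flows: by Theorem \ref{thm_mult_one} (multiplicity-one for generalized limit flows), no tangent flow at a point of $\widetilde{\mathcal{M}}$ can be a static or quasistatic density-two plane, and by the equality case of Huisken's monotonicity formula \cite{Huisken_monotonicity} no tangent flow can be a plane of multiplicity $\geq 2$ at all. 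Hence every tangent flow is either a multiplicity-one plane (a regular point) or a nonflat self-similar flow.

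Next I would invoke White's stratification theorem \cite{White_stratification} for Brakke flows: the singular set stratifies as $\bigcup_k \mathcal{S}^k$ where $\mathcal{S}^k$ consists of points at which no tangent flow is invariant under a $(k+1)$-dimensional family of spatial translations, and $\dim_{\mathcal{P}}\mathcal{S}^k \leq k$. Since we are in $\mathbb{R}^3$, the top stratum would be $\mathcal{S}^2$, consisting of points where some tangent flow splits off two translation directions, i.e. is a static or quasistatic (possibly multiplicity) plane; but by the multiplicity-one conclusion above such a plane has multiplicity one and hence the point is regular, so $\mathcal{S}^2 \cap \sing = \emptyset$. Therefore the singular set is contained in $\mathcal{S}^1$, which has parabolic Hausdorff dimension at most $1$. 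The same bound then descends to the free boundary flow $(\mathcal{K},\mathcal{M})$ in $\mathbb{R}^3_+$ since the reflection is a local parabolic bi-Lipschitz map.

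The main obstacle I anticipate is making sure the stratification machinery is genuinely applicable at \emph{boundary} points, i.e. that Edelen's reflected flow satisfies all the hypotheses of White's stratification theorem (in particular, that it is a valid integral Brakke flow with the right monotonicity and tangent flow structure, and that reflected Gaussian densities behave correctly so that "multiplicity-one plane tangent flow $\Rightarrow$ regular point" holds at $\partial\mathbb{R}^3_+$). This is handled by appealing to Edelen's local regularity and monotonicity results \cite{Edelen_brakke} together with the reflection principle; the point is that once reflected, a multiplicity-one plane tangent flow (whether or not perpendicular to $\partial\mathbb{R}^3_+$) forces smoothness by Brakke–White local regularity. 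A secondary technical point is that the quasistatic planes appearing in our list (Corollary \ref{cor:staticlimits}) do not actually occur as genuine tangent flows at singular points — but this is exactly the content of Proposition \ref{prop:partial-reg} for flows in $\mathcal{C}$, and the argument there (equality case of monotonicity plus stratification) applies verbatim to general generalized limit flows, since any tangent flow is backwardly self-similar by monotonicity. Putting these pieces together yields the claimed dimension bound.
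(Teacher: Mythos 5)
Your overall strategy — stratification, multiplicity-one, local regularity — is the same as the paper's, and the reflection to $\mathbb{R}^3$ plus the observation that surgery regions are smooth are both fine. However, there is a genuine gap in one step: the claim that ``by the equality case of Huisken's monotonicity formula no tangent flow can be a plane of multiplicity $\geq 2$ at all'' does not hold as stated. A static multiplicity-$k$ plane ($k\geq 2$) is itself a backwardly self-similar Brakke flow, so the equality case of monotonicity does nothing to exclude it. In Proposition \ref{prop:partial-reg} the analogous step works only because the class $\mathcal{C}$ comes with the a priori bound $\lim_{\tau\to\infty}\Theta\leq 2$ and the assumption that the flow is not itself a multiplicity-two plane, which together with monotonicity pins down the density; for a general generalized limit flow you have neither of those a priori. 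You would need to supply the entropy/density bound separately, or argue as the paper does: the paper's proof of Theorem \ref{thm_part_reg} invokes Theorem \ref{thm_mult_one} to exclude multiplicity-two planar tangent flows and Corollary \ref{prop_onesidedmin} (one-sided minimization) to exclude nontrivial (non-planar, or higher-multiplicity) cones — not Huisken's equality case.

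Relatedly, you omit one-sided minimization altogether. While one can argue the dimension bound already follows once planar tangent flows are known to be multiplicity one (non-planar cones would land in $\mathcal{S}^1$ anyway), one-sided minimization is exactly the tool the paper uses to rule out nontrivial cones and higher-multiplicity planes simultaneously, and having it in hand makes the dimension reduction cleaner. So: same skeleton as the paper, but you should replace the appeal to Huisken's equality case with Theorem \ref{thm_mult_one} together with Corollary \ref{prop_onesidedmin}, as the paper does.
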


\begin{proof}
By Corollary \ref{prop_onesidedmin} (one-sided minimization for generalized limit flows) and Theorem \ref{thm_mult_one} (multiplicity-one for generalized limit flows) nontrivial cones and higher-multiplicity planes cannot occur as tangent flows of generalized limit flows. Using this, the assertion follows from standard dimension reduction and the local regularity theorem \cite{White_stratification,white_regularity,Edelen_brakke}.
\end{proof}

\begin{proposition}[rigidity]\label{prop_rig}
Let $(\mathcal{K},\mathcal{M})$ be a generalized limit flow. If $0\in\partial K_{0}$ is a regular point, and $H(0,0)=0$, then $(\mathcal{K},\mathcal{M})\cap \{t\leq 0\}$ is a flat density-one plane or half plane.
\end{proposition}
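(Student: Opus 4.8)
The plan is to argue by a strong maximum principle / rigidity argument for mean-convex flows, exactly in the spirit of the corresponding interior statement for mean-convex Brakke flows, but now keeping track of the free boundary. First I would use Theorem \ref{thm_part_reg} (partial regularity) to reduce to a genuinely smooth situation: since $0\in\partial K_0$ is a regular point, there is a backwards parabolic neighborhood $U\times(a,0]$ in which $\partial\mathcal{K}$ is a smooth multiplicity-one free boundary (or interior) mean curvature flow, with $H\geq 0$ by the convexity/mean-convexity estimate of Proposition \ref{basic_prop} passed to the limit (all generalized limit flows are mean-convex, being Hausdorff/Brakke limits of strictly mean-convex flows). On this smooth piece $H$ satisfies the linear parabolic equation $(\partial_t-\Delta-|A|^2)H=0$ in the interior, together with the Neumann-type boundary condition $\partial_\nu H = A^{\partial D}(\nu,\nu)\,H \geq 0$ coming from the free boundary condition and the fact that $\partial D$ has positive second fundamental form (here $\nu$ is the outward conormal along the free boundary; if $0$ is an interior point the boundary term is vacuous). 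Since $H\geq 0$ and $H(0,0)=0$, the strong maximum principle (Hopf lemma at the boundary, in the free boundary case) forces $H\equiv 0$ on the connected component of the smooth part containing $(0,0)$, and then on the whole backwards smooth piece.

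Next I would promote $H\equiv 0$ to flatness. Once $H\equiv 0$ on $\partial\mathcal{K}\cap(U\times(a,0])$, the surfaces $\partial K_t$ are static minimal surfaces with free boundary meeting $\partial D$ (i.e.\ $\partial\mathbb{R}^3_+$) orthogonally, and independent of $t$ on this neighborhood. Now I invoke the one-sided minimization property: by Corollary \ref{prop_onesidedmin}, $\partial K_t$ is (locally) one-sided area minimizing into $K_t$, so the static minimal surface through $(0,0)$ is in fact stable (indeed minimizing). The key global input is then a tangent-flow / blowdown argument together with the classification of static limits: taking a tangent flow at $(0,0)$ — which by Theorem \ref{thm_mult_one} has multiplicity one and by partial regularity has a $1$-dimensional singular set — one gets a static minimal cone with free boundary that is stable and multiplicity-one; by Corollary \ref{cor:staticlimits} (static or quasistatic limits), applied to this tangent flow which lies in the class $\mathcal{C}$, the only options are a multiplicity-one plane or half plane. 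Hence the tangent flow at $(0,0)$ is a flat plane or half plane, so $(0,0)$ is in fact a point where the Gaussian density equals one; combined with $H\equiv 0$ and Brakke regularity, the flow is smooth and flat in a whole backwards neighborhood, and then a continuation/connectedness argument (the set where the flow is a flat static plane is open and closed in the backwards smooth region, and by the monotonicity formula the density-one, $H=0$ condition propagates backwards to all $t\leq 0$) gives that $(\mathcal{K},\mathcal{M})\cap\{t\leq 0\}$ is a single flat multiplicity-one plane or half plane.

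The main obstacle I expect is the boundary analysis in the free boundary case: one has to be careful that the strong maximum principle and Hopf lemma genuinely apply at the free boundary, i.e.\ that the boundary condition $\partial_\nu H = \mathrm{II}^{\partial D}(\nu,\nu)H$ has the correct sign and that $H$ is smooth up to the free boundary — this is where the strict convexity hypothesis on $\partial D$ is essential, and it should follow from the boundary regularity for free boundary mean curvature flow used throughout (Edelen \cite{Edelen_brakke}, \cite{EHIZ}). A secondary subtlety is ruling out that the static minimal surface obtained is a nontrivial (curved but minimal, e.g.\ catenoidal) free boundary minimal surface rather than a plane: this is precisely handled by combining one-sided minimization with the tangent-cone classification in Corollary \ref{cor:staticlimits}, which is why I route the argument through tangent flows and the class $\mathcal{C}$ rather than trying to argue purely locally. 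I would also remark that the statement is local-to-global in $t$: the real content is upgrading the local rigidity near $(0,0)$ to the assertion on all of $\{t\leq 0\}$, which uses backwards uniqueness / the equality case of the monotonicity formula once density one is known at $(0,0)$.
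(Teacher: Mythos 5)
The first part of your proof (strong maximum principle, Hopf lemma at the free boundary with the sign coming from strict convexity of $\partial D$, possibly working with the reflected flow) matches the paper and is fine. The serious gap is in the step where you try to upgrade $H\equiv 0$ to flatness. You take a \emph{tangent flow} at $(0,0)$, i.e.\ a blow-\emph{up}, and use Corollary~\ref{cor:staticlimits} to conclude it is a multiplicity-one plane or half plane. But $(0,0)$ is assumed to be a regular point, so its Gaussian density already equals one and its tangent flow is trivially a multiplicity-one plane; this gives no new information. From a plane tangent flow at one point plus $H\equiv 0$ nearby you cannot conclude the surface is flat in a backwards neighborhood: a catenoid (or a curved free boundary minimal surface) has $H\equiv 0$, is smooth, and has density one at every regular point. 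So the step ``so $(0,0)$ is in fact a point where the Gaussian density equals one; combined with $H\equiv 0$ and Brakke regularity, the flow is smooth and flat in a whole backwards neighborhood'' does not follow. You noticed the catenoid issue yourself, but the fix you propose (routing through the tangent flow at $(0,0)$) is the wrong rescaling: you need a blow-\emph{down}, not a blow-up.

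What the paper actually does is first propagate $H\equiv 0$ backward in time to obtain an entire noncompact time-independent minimal component $\Sigma$ with $\Sigma\times(-\infty,t_0]\subseteq\partial\mathcal{K}$ (using the smallness of the singular set to connect points by timelike curves avoiding singularities, and the strict maximum principle), which in particular means no surgeries occur near $\Sigma$. It then takes the \emph{tangent cone at infinity} of $\Sigma$ (respectively of the reflected $\widetilde\Sigma$), and uses one-sided minimization together with multiplicity-one (Theorem~\ref{thm_mult_one} for $\Lambda>0$, or Proposition~\ref{prop_L0} plus \cite{EHIZ} for $\Lambda=0$) to conclude this blowdown is a multiplicity-one plane; monotonicity then forces $\Sigma$ to be flat. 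Finally White's strong halfspace theorem is invoked to rule out other connected components, a step your continuation/connectedness argument does not cover. So your outline is missing both the global-in-space spreading of $H\equiv 0$ (needed to even speak of the blowdown of the whole component) and the correct rescaling direction, and omits the final exclusion of extra components.
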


\begin{proof}
Suppose that $(0,0)$ is a regular point and $H(0,0)=0$. Then by the strict maximum principle, possibly applied to the reflected flow, we have $H\equiv 0$ in some backwards parabolic ball $P_{-}(0,\rho)$. By Theorem \ref{thm_part_reg} (partial regularity) we can choose a time $t_0\in (-\rho^2,0)$ at which the solution is completely smooth. Then, again by the strict maximum principle, there is an entire connected component $\Sigma\subset M_{t_0}$ that contains the origin and on which the mean curvature vanishes identically. Note that $\Sigma$ must be noncompact, since there are no compact minimal surfaces in Euclidean space and no compact free boundary minimal surfaces in Euclidean halfspace. Next, again by the smallness of the singular set any $X\in \Sigma\times (-\infty,t_0]\setminus\textrm{sing}\mathcal{M}$ can be connected to $(0,t_0)$ by a time-like space-time curve that entirely avoids the singular set. Together with the strict maximum principle this yields
\begin{equation}
\Sigma\times (-\infty,t_0]\subseteq \partial\mathcal{K}.
\end{equation}
In particular, there are no surgeries near $\Sigma$.
Now, by Corollary \ref{prop_onesidedmin} (one-sided minimization for generalized limit flows) and Theorem \ref{thm_mult_one} (multiplicity-one for generalized limit flows) in the case $\Lambda>0$, and by Proposition \ref{prop_L0} (convergence to level-set flow) and the one-sided minimization and multiplicity-one theorem for blowups of the level-set flow from \cite{EHIZ} in the case $\Lambda=0$, the tangent cone at infinity of $\Sigma$ or  $\widetilde{\Sigma}$, respectively, must be a multiplicity-one plane. Hence, by monotonicity, $\Sigma$ or  $\widetilde{\Sigma}$, respectively, is flat. Finally, by White's strong half space result \cite[Theorem 7]{White_nature} there cannot be any other connected components.
\end{proof}

\begin{theorem}[nonnegative second fundamental form]\label{thm_conv}
Let $(\mathcal{K},\mathcal{M})$ be a generalized limit flow. Then at every regular point all principal curvatures are nonnegative.
\end{theorem}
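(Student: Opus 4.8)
The plan is to argue by contradiction via a point-picking argument, reducing to the case of a generalized limit flow through a point where the smallest principal curvature is negative, and then to leverage the strict maximum principle together with the rigidity statement just proved. First I would assume towards a contradiction that there is a regular space-time point $X_0=(x_0,t_0)$ of some generalized limit flow $(\mathcal{K},\mathcal{M})$ at which $\lambda_1<0$. Since generalized limit flows arise from $\Lambda$-blowup sequences of free boundary $(\delta,\mathcal{H}^j)$-flows, and since the presurgery domains satisfy $|A|/H\leq C$ and $H>C^{-1}$ on bounded time intervals (Proposition \ref{basic_prop}), the quantity $\lambda_1/H$ is well-defined and scale-invariant; moreover, surgeries only improve $\lambda_1/H$ in the precise sense of Definition \ref{def_replacestd}(c). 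Thus the infimum of $\lambda_1/H$ over regular points, call it $-\alpha$ with $\alpha\geq 0$, is attained (or approached) and is preserved under the blowup and compactness procedures, so without loss of generality $\inf \lambda_1/H = -\alpha$ is realized at a regular point of $(\mathcal{K},\mathcal{M})$ itself, with $\alpha>0$.

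The key step is then the strict maximum principle for $\lambda_1/H$ along mean curvature flow, in the form developed by Huisken–Sinestrari (and in the free boundary setting, using the Robin-type boundary condition for the second fundamental form, as in \cite{EHIZ} and \cite{Edelen_convexity}): on any smooth portion of the flow, $\lambda_1/H$ satisfies a parabolic inequality with a favorable zeroth-order reaction term, so that an interior (or free boundary) minimum of $\lambda_1/H$ equal to the global infimum $-\alpha<0$ forces $\lambda_1/H\equiv -\alpha$ on an entire backwards parabolic neighborhood. Propagating this along time-like curves avoiding the singular set — which has parabolic Hausdorff dimension at most $1$ by Theorem \ref{thm_part_reg}, exactly as in the proof of Proposition \ref{prop_rig} — and noting there are no surgeries near such a set, we conclude $\lambda_1/H\equiv -\alpha$ on an entire connected component $\Sigma\times(-\infty,t_1]\subseteq\partial\mathcal{K}$ for some smooth time slice $\Sigma$. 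A flow with $\lambda_1/H$ a negative constant everywhere and $H>0$ is highly constrained: taking a tangent flow at $-\infty$, this self-shrinking limit still has $\lambda_1/H\equiv -\alpha<0$, yet by Corollary \ref{prop_onesidedmin} (one-sided minimization) and Theorem \ref{thm_mult_one} (multiplicity-one), combined with the classification of self-shrinkers arising here (cylinders, spheres, planes — and their free boundary counterparts), every such tangent flow has $\lambda_1\geq 0$, a contradiction; alternatively one invokes directly that the only mean-convex self-shrinkers in $\mathbb{R}^3$ or $\mathbb{R}^3_+$ are convex.

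Alternatively, and perhaps more cleanly, once $\lambda_1/H\equiv -\alpha$ with $\alpha>0$ on $\Sigma\times(-\infty,t_1]$ one can directly apply Proposition \ref{prop_rig}: the splitting theorem forces $\Sigma$ to split off a line in the direction of the zero eigenvalue wherever $\lambda_1$ has a kernel, but here the issue is that $\lambda_1$ is strictly negative, not zero, so instead I would observe that a flow with a strictly negative principal curvature ratio that is constant cannot be ancient and mean-convex — its mean curvature would have to blow up going backwards in time at a definite rate, contradicting that it is a blowup limit with bounded geometry on the relevant scales. \textbf{The main obstacle} I anticipate is the free boundary version of the strict maximum principle for $\lambda_1/H$: one must verify that the Robin boundary condition satisfied by $A$ along $\partial\mathbb{R}^3_+$ (coming from the convexity of $D$ in the limit, which becomes the flat condition that $\partial\mathbb{R}^3_+$ is totally geodesic) is compatible with the Hopf lemma, so that a boundary minimum of $\lambda_1/H$ also propagates. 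This is handled by passing to the reflected flow across $\partial\mathbb{R}^3_+$, which is smooth mean-convex mean curvature flow in $\mathbb{R}^3$ to which the interior strict maximum principle applies directly — this is precisely why the partial regularity and rigidity arguments above were phrased in terms of the reflected flow.
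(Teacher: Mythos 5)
Your high-level strategy (contradiction via point-picking on $\lambda_1/H$, followed by a maximum-principle argument, with reflection handling the free boundary) matches the paper's proof in spirit, and your observation about Definition~\ref{def_replacestd}(c) controlling $\lambda_1/H$ across surgeries is exactly the right ingredient. However there are two genuine gaps.

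First, your ``without loss of generality $\inf\lambda_1/H=-\alpha$ is realized at a regular point of $(\mathcal{K},\mathcal{M})$ itself'' is not justified as stated. A single generalized limit flow is ancient and may only approach the infimum; and the relevant infimum is taken over \emph{all} generalized limit flows with initial data $K_0$, not a single one. The paper handles this by taking a \emph{sequence} of generalized limit flows $(\mathcal{K}^j,\mathcal{M}^j)$ with regular points $X_j$ along which $\lambda_1/H(X_j)\to\gamma$, translating and parabolically rescaling to normalize
$\sup_{P(0,1)}|A_{\partial\widetilde{\mathcal{K}}^j}|\leq 1\leq\sup_{\overline{P(0,1)}}|A_{\partial\widetilde{\mathcal{K}}^j}|$,
and then passing to a further subsequential limit via Proposition~\ref{prop_L0}, Theorem~\ref{prop_compactness}, or Proposition~\ref{prop_LI} depending on whether the induced rescaling factors give $\Lambda=0$, finite, or infinite. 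The $\Lambda=0$ branch requires Theorem~\ref{thm_no_microscopic} (no microscopic surgeries) together with the [EHIZ] convexity theorem for level-set flow blowups; your proposal does not address this case at all.

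Second, and more importantly, you never verify that $H>0$ at the limiting origin, which is needed for $\lambda_1/H$ to be meaningful and for the strict maximum principle to apply. This is precisely what Proposition~\ref{prop_rig} (rigidity) is for: with the curvature normalization above, if $H(0,0)=0$ in the limit then by rigidity the flow would be a flat plane or half plane for $t\leq 0$, contradicting $\sup_{\overline{P(0,1)}}|A|\geq 1$. You proved the rigidity proposition but did not use it here, and the gap cannot be closed without it. Once positivity of $H$ is secured, the paper concludes directly from the strict maximum principle (applied to the reflected flow in the boundary case): $\lambda_1/H$ attaining its negative infimum at a regular interior space-time point of an ancient flow with strictly positive $H$ is already impossible. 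Your further propagation to $\Sigma\times(-\infty,t_1]$ and the subsequent discussion of tangent flows at $-\infty$ is unnecessary, and your fallback claim that ``a flow with a strictly negative constant $\lambda_1/H$ cannot be ancient and mean-convex'' since ``$H$ would blow up backwards in time at a definite rate'' is too vague to serve as a proof.
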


\begin{proof}
Fixing $K_0\subset D$, suppose towards a contradiction that there is a sequence of generalized limit flows $(\mathcal{K}^j,\mathcal{M}^j)$ and a sequence of regular points $X_j$ such that $\frac{\lambda_1}{H}(X_j)$ converges to an infimal value $\gamma< 0$. Note that $\gamma>-\infty$ thanks to the bound $|A|\leq CH$ from Proposition \ref{basic_prop} (basic properties). Moreover, by translating and scaling we may assume that $X_j=0$ and
\begin{equation}\label{rescaling_white}
\sup_{P(0,1)} |A_{\partial\widetilde{\mathcal{K}}^j}| \leq 1\leq \sup_{\overline{P(0,1)}} |A_{\partial\widetilde{\mathcal{K}}^j}|.
\end{equation}
If there is no $r>0$ such that the flow is unmodified by surgeries in $P(0,r)$, then after adjusting our sequence, using in particular item (c) of Definition \ref{def_replacestd} (replacement by standard caps or standard half caps), we may assume that $(0,0)$ lies in the presurgery domain.

If $(\mathcal{K}^j,\mathcal{M}^j)$ is a $0$-blowup sequence, then using Proposition \ref{prop_L0} (small blowups) and Theorem \ref{thm_no_microscopic} (no microscopic surgeries) we obtain contradiction with the convexity theorem for blowup limits of the free boundary level-set flow from \cite{EHIZ}. Hence, by Theorem \ref{prop_compactness} (hybrid compactness) and Proposition \ref{prop_LI} (large blowups) we may assume that $(\mathcal{K}^j,\mathcal{M}^j)$ converges to a generalized limit flow $(\mathcal{K},\mathcal{M})$. By \eqref{rescaling_white} and Proposition \ref{prop_rig} (rigidity) the limit $(\mathcal{K},\mathcal{M})$ must have strictly positive mean curvature. Hence $\lambda_1/H$ attains a strictly negative minimum at the space-time origin, which, possibly after considering the reflected flow, contradicts the strict maximum principle. This proves the theorem.
\end{proof}

\bigskip

\section{Canonical neighborhoods and existence theorem}

In this final section, we prove the canonical neighborhood theorem and the existence theorem for free boundary flows with surgery. 

\begin{theorem}[canonical neighborhoods]\label{thm_can_nbd}
Suppose $\K^j$ is a sequence of $(\delta,\mathcal{H}^j)$-flows starting at a smooth compact strictly mean-convex domain $K_0\subset D$, such that $\delta\leq\bar{\delta}$ and $\mathcal{H}^j\to \infty$. Then, for any sequence of space-time points $X_j=(x_j,t_j)\in\partial\K^j$ with  $\sup_j t_j<\infty$ and $H(X_j)\to\infty$, the rescaled flows
$\mathcal{D}_{H(X_j)}( \K^j-X_j)
$ subsequentially converge to either
\begin{itemize}
\item the evolution of a standard cap preceded by a round shrinking cylinder, or a round shrinking cylinder, round shrinking sphere, translating bowl or ancient oval, or
\item the evolution of a standard half cap preceded by a round shrinking half cylinder, or a round shrinking half cylinder, round shrinking half sphere, translating half bowl or ancient half oval.
\end{itemize}
\end{theorem}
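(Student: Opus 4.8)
The plan is to derive the canonical neighborhood theorem by combining the hybrid compactness machinery with the structural results (multiplicity-one, partial regularity, convexity) already established, together with the classification of ancient solutions. First I would argue by contradiction: if the conclusion fails, there is a sequence $X_j = (x_j, t_j) \in \partial\mathcal{K}^j$ with $H(X_j) \to \infty$ such that the rescaled flows $\widetilde{\mathcal{K}}^j = \mathcal{D}_{H(X_j)}(\mathcal{K}^j - X_j)$ do not subconverge to any flow on the stated list. Setting $\lambda_j = H(X_j) \to \infty$, this is precisely a $\Lambda$-blowup sequence for some $\Lambda \in [0,\infty]$ after passing to a subsequence. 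The case $\Lambda = \infty$ is handled by Proposition \ref{prop_LI} together with Theorem \ref{thm_no_microscopic} (no microscopic surgeries): either we get a mean-convex (free boundary) flow in $\mathbb{R}^3$ or $\mathbb{R}^3_+$, or we converge smoothly to a (quasi-)static multiplicity-one plane or half plane — but the latter is incompatible with the normalization $H(X_j) = 1$ (the rescaled mean curvature at the origin equals $1$, so the limit cannot be static with $H \equiv 0$), and microscopic surgeries are excluded. The case $\Lambda = 0$ reduces via Proposition \ref{prop_L0} to a blowup limit of the free boundary level set flow, which by the results of \cite{EHIZ} is on the list. So the main case is $\Lambda \in (0,\infty)$, where Theorem \ref{prop_compactness} produces a limit $(\mathcal{K},\mathcal{M})$ that is either an ancient Brakke $\delta$-flow in $\mathbb{R}^3$ or an ancient free boundary Brakke $\delta$-flow in $\mathbb{R}^3_+$.

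Next I would analyze this limit $(\mathcal{K},\mathcal{M})$. By Theorem \ref{thm_mult_one} it has multiplicity-one, by Theorem \ref{thm_part_reg} its singular set has parabolic Hausdorff dimension at most $1$, and by Theorem \ref{thm_conv} it has nonnegative second fundamental form at every regular point. The normalization forces $H(0,0) = 1 > 0$, so by Proposition \ref{prop_rig} (rigidity) the origin lies in a region of strictly positive mean curvature, in particular $(\mathcal{K},\mathcal{M})$ is not a static/quasistatic plane. There are now two sub-cases. If there are no surgeries in a spacetime neighborhood of the origin (equivalently, the limit is a genuine ancient mean-convex Brakke flow near $0$), then I invoke the classification of ancient low-entropy / noncollapsed mean-convex solutions: in $\mathbb{R}^3$ the work of \cite{BC,ADS} (together with the convexity and the structure theory, one knows the limit is convex, noncollapsed, and ancient) gives that it is a round shrinking sphere, round shrinking cylinder, translating bowl, or ancient oval; in $\mathbb{R}^3_+$ one reflects across $\partial\mathbb{R}^3_+$ using \cite[Proposition 4.6]{Edelen_brakke} and applies the same classification to the reflected flow, concluding that the original flow is one of the corresponding "half" solutions (round shrinking half sphere, half cylinder, half bowl, or half oval). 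If instead there is a surgery in every neighborhood of the origin, then by Definition \ref{Def_Brakke_surgery} the origin sits in a surgery region $P_i$, where by construction the forwards portion is $\delta'$-close to a pair of standard caps (or half caps) and the backwards portion is a strong $\delta$-neck (or strong half $\delta$-neck), hence $\delta$-close to a round shrinking cylinder (or half cylinder). By the smooth convergence in Definition \ref{Def_brakke_conv_surg}\eqref{Def_brakke_conv_surg2}, taking $\delta \to 0$ along a diagonal argument (using that $\widetilde{\mathcal{K}}^j$ itself is built from $\delta$-necks with $\delta \le \bar\delta$, which can be decreased by Convention \ref{conv}), the limit is exactly the evolution of a standard cap preceded by a round shrinking cylinder (or a standard half cap preceded by a round shrinking half cylinder).

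The output of these two sub-cases is precisely the list in the statement, which contradicts our assumption that $\widetilde{\mathcal{K}}^j$ does not subconverge to anything on the list. More carefully, one must check that the convergence is in the appropriate smooth (or, near surgeries, $C^1$) topology claimed implicitly by "subsequentially converge": away from surgery regions the convergence is smooth by the local regularity theorem \cite{white_regularity,Edelen_brakke} once multiplicity-one and the curvature bound $|A| \le CH$ from Proposition \ref{basic_prop} are in hand, and in surgery regions it is smooth by Definition \ref{Def_brakke_conv_surg}\eqref{Def_brakke_conv_surg2}, with the caveat noted in the reflection remark that at surgery times one only has $C^1$-closeness to a standard cap on the boundary. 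This is enough to identify the limit as one of the listed model flows.

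I expect the main obstacle to be the surgery sub-case in the free boundary setting: one needs to be sure that a half-neck surgery in the limit genuinely produces the "evolution of a standard half cap preceded by a round shrinking half cylinder" as a well-defined model, and that the reflection procedure of \cite{Edelen_brakke} is compatible with all the structural results (multiplicity-one, convexity, and the ancient classification) simultaneously — in particular that the reflected flow near a boundary surgery point is itself an ancient Brakke $\delta$-flow to which the interior classification applies. A secondary subtlety is ruling out the degenerate scenario where the blowup point escapes to the boundary at a controlled rescaled distance (handled as in the proof of Theorem \ref{prop_compactness} by re-centering on $\partial D$) versus escaping to infinity (in which case one is simply in the interior case and the classification of ancient mean-convex flows in $\mathbb{R}^3$ applies directly). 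Finally, one must confirm that $\Lambda = 0$ with a surgery at the origin is genuinely excluded — this is exactly the content of Theorem \ref{thm_no_microscopic}, which was proved precisely for this purpose.
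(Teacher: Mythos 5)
Your overall strategy — reduce via Propositions \ref{prop_L0} and \ref{prop_LI} to the case $\Lambda\in(0,\infty)$, apply Theorem \ref{prop_compactness} to extract a limit, use multiplicity-one, partial regularity, convexity and rigidity to make the limit smooth and convex, and then split into a no-surgery sub-case (classified by \cite{BC,ADS}, after reflection if necessary) and a surgery sub-case — is indeed the architecture of the paper's proof. However, there are two genuine gaps.

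First, you rescale directly by $\lambda_j=H(X_j)$. While this is the normalization appearing in the statement, it does not control $\sup_{P(0,1)}|A|$ in the rescaled flows: the bound $|A|\leq CH$ from Proposition \ref{basic_prop} only ties $|A|$ to $H$ pointwise, and $H$ near $X_j$ could be much larger than $H(X_j)$. Consequently, you cannot conclude that the origin is a regular point of the limit, which undermines your appeal to Proposition \ref{prop_rig} and to the smooth convergence you use throughout (including your argument that $\Lambda=\infty$ forces $H(0,0)=1\neq 0$, which presupposes smooth convergence at the origin). The paper instead chooses $\lambda_j$ so that $\sup_{P(0,1)}|A_{\partial\widetilde{\mathcal{K}}^j}|\leq 1\leq \sup_{\overline{P(0,1)}}|A_{\partial\widetilde{\mathcal{K}}^j}|$, proves the result for this blowup, and only at the end observes that $H(X_j)/\lambda_j\to H(0)\in(0,\infty)$, so one may pass from $\lambda_j$ to $H(X_j)$.

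Second, and more seriously, your treatment of the surgery sub-case is not a proof. You claim that if the origin sits in a surgery region then the limit is ``exactly the evolution of a standard cap preceded by a round shrinking cylinder,'' invoking a diagonal argument in $\delta$. But $\delta$ is \emph{fixed} along the sequence $\mathcal{K}^j$ (only $\mathcal{H}^j\to\infty$), and Convention \ref{conv} only allows finitely many shrinkings of $\bar\delta$, so you cannot send $\delta\to 0$ along the sequence. More to the point, even knowing that the surgery neck in the limit has arbitrarily good quality (via Definition \ref{def_MCF_surgery}(d)) gives only \emph{local} structure near the surgery; it does not tell you that the entire pre-surgery time slice $K^{0,-}_T$ is a round solid cylinder, nor that there are no other surgeries at earlier times. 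The paper closes this gap with the crucial splitting argument: using Definition \ref{def_MCF_surgery}(b), one finds a curve $\gamma_j$ in the approximator connecting $\{H=H^j_{\mathrm{trig}}\}$ to $\{H\leq H^j_{\mathrm{th}}\}$ that passes through the neck; since $\mathcal{K}^0$ has strictly positive mean curvature and $\mathcal{H}^j\to\infty$, $\gamma_j$ must start and end arbitrarily far away, so $K^{0,-}_T\setminus N$ has two unbounded components; hence $K^{0,-}_T$ has two ends, contains a line, and all prior time slices split off an $\mathbb{R}$-factor. Only after this splitting does one conclude that there are no other surgeries and that the pre-surgery flow is globally a round shrinking (half) cylinder. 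Without this argument your identification of the limit is unjustified. You also omit the final step showing $\mathcal{K}^0=\mathcal{K}$ (that there are no other connected components), which the paper deduces from the fact that the classified ancient solutions sweep out all of space as $t\to-\infty$.
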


\begin{proof}
Consider the blowup sequence $\widetilde{\K}^j:=\mathcal{D}_{\lambda_j}( \K^j-X_j)$, where $\lambda_j$ is chosen such that
\begin{equation}\label{rescaling_white_again}
\sup_{P(0,1)} |A_{\partial\widetilde{\mathcal{K}}^j}| \leq 1\leq \sup_{\overline{P(0,1)}} |A_{\partial\widetilde{\mathcal{K}}^j}|.
\end{equation}
In light of Proposition \ref{prop_L0} (small blowups) and Proposition \ref{prop_LI} (large blowups) we may assume that
$\lambda_j/H^j_{\textrm{neck}}\to \Lambda \in (0,\infty)$,
since \cite[Corollary 1.3]{EHIZ} or the argument from its proof, respectively, already yields the conclusion in the other cases. Hence, by Theorem \ref{prop_compactness} (hybrid compactness), considering the associated family of Radon measures $\widetilde{\M}^j$ defined as in \eqref{ass_radon}, we can pass to a subsequential limit $(\K,\M)$.
Define $\mathcal{K}^0=\{K_t^0\}_{t\leq 0}$ by for each $t\leq 0$ setting $K_t^0$ to be the connected component of $K_t$ that contains the origin. Note that all these sets in fact contain a closed ball $B$ of positive radius thanks to \eqref{rescaling_white_again}. By Theorem \ref{thm_part_reg} (partial regularity) together with Theorem \ref{thm_conv} (nonnegative second fundamental form) and connectedness, the sets $K_t^0$ are smooth and convex for almost every $t$. Remembering in particular the way surgeries are performed, we see that convexity in fact holds at all $t\leq 0$.
Now given any $p\in \partial K_t^0$ the convex hull of $p$ and $B$ is contained in $K_t^0$, and consequently, remembering Corollary \ref{prop_onesidedmin} (one-sided minimization for generalized limit flows) and Theorem \ref{thm_mult_one} (multiplicity-one for generalized limit flows), any tangent flow at $(p,t)$ must be a density-one plane or half plane. Hence, $K_t^0$ is smooth and convex for all $t\leq 0$.\\

If there are no surgeries, then by the classification from \cite{BC,ADS}, the flow $\mathcal{K}^0$  or its reflection $\widetilde{\mathcal{K}^0}$, respectively, must be a round shrinking cylinder, round shrinking sphere, translating bowl or ancient oval. Assume now $\mathcal{K}^0$ does contain a surgery, let $T\leq 0$ be a surgery time and let $N\subset {K}_T^0$ be a surgery neck or half neck of quality $\bar{\delta}$ sitting in the backward time slice.
Note that ${N}$ is the limit of some $\widetilde{N}^j$ in the approximators $\widetilde{\K}^j$.
By part (b) of Definition \ref{def_MCF_surgery} (free boundary flow with surgery) we can find a curve $\gamma_j$ in the approximator connecting
$\{H=H_{\textrm{trig}}^j\}$ and $\{H\leq H_{\textrm{th}}^j\}$,  such that it passes through $\widetilde{N}^j$ but avoids all other $\bar{\delta}$-necks and half $\bar{\delta}$-necks of the disjoint collection. We can assume that the curve $\gamma_j$ enters and leaves $\widetilde{N}^j$ exactly once. 
Let $p_j$ be the center of $\widetilde{N}^j$. Since $\mathcal{K}^0$ is smooth with strictly positive mean curvature, and since $\mathcal{H}^j\rightarrow \infty$, given any $R<\infty$, for $j$ large enough the curve $\gamma_j$ must start and end outside $B(p_j, R )$. Thus, ${K}_T^{0,-}\setminus N$ has at least two unbounded components. Since ${K}_T^{0,-}$ is connected, ${K}_T^{0,-}\setminus N$ must have exactly two components.
We have thus shown that ${K}_T^{0,-}$ has two ends, and consequently it contains a line, and all prior time slices contain this line as well.
Hence, at each fixed time the convex set splits off an $\R$-factor, and thus there cannot be any other surgeries. It follows that ${\K}^0$ or $\widetilde{\mathcal{K}^0}$, respectively, is a round cylindrical flow for $t<T$ followed by the unique evolution of the standard cap for $t>T$.\\

Finally, since all the ancient solutions from the above sweep out the entire space for $t\to -\infty$ it follows that there are in fact no other connected components, i.e. $\K^0=\K$, and since $H(X_j)\to H(0)\in (0,\infty)$, remembering also Theorem \ref{thm_no_microscopic} (no microscopic surgeries), we can rescale by $H(X_j)$ instead of $\lambda_j$ to conclude the proof of the theorem.
\end{proof}

We can now prove our main theorem, which we restate here for convenience of the reader:

\begin{theorem}[free boundary flow with surgery]
Given any smooth compact strictly mean-convex free boundary domain $K_0\subset D$, for suitable choice of the surgery parameters $\delta$ and $\mathcal{H}$, there exists a free boundary $(\delta,\mathcal{H})$-flow $\{K_t\}_{t\in[0,\infty)}$  starting at $K_0$. Moreover, the flow either becomes extinct in finite time or for $t\to\infty$ converges smoothly in the one or two-sheeted sense to a finite collection of stable connected minimal surfaces with empty or free boundary (in particular, there are no surgeries for $t$ sufficiently large).
\end{theorem}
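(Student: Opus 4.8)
The plan is to follow the continuation-method strategy of \cite{HK}, now upgraded using the canonical neighborhood theorem and the long-time results from \cite{EHIZ}. First I would fix the standard cap and $\Gamma$ as in Convention \ref{conv}, and set up the bootstrap: for a fixed small $\delta\leq\bar\delta$, I claim there is a choice of curvature scales $\mathcal{H}=(H_{\textrm{thick}},H_{\textrm{neck}},H_{\textrm{trigger}})$ with $H_{\textrm{trigger}}\gg H_{\textrm{neck}}\gg H_{\textrm{thick}}\gg 1$ such that a free boundary $(\delta,\mathcal{H})$-flow starting at $K_0$ exists on $[0,\infty)$. The construction of the flow on a fixed finite time interval is by the standard algorithm: run smooth free boundary mean curvature flow until $H=H_{\textrm{trigger}}$ somewhere, then perform surgeries on a minimal collection of solid $\delta$-necks and half $\delta$-necks of curvature $H_{\textrm{neck}}$ separating $\{H=H_{\textrm{trigger}}\}$ from $\{H\leq H_{\textrm{thick}}\}$, discard the components with $H>\tfrac{1}{10}H_{\textrm{neck}}$ everywhere, and continue. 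The only thing that can go wrong is that at a trigger time no such separating collection of good necks exists.

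The heart of the argument is to rule this out using Theorem \ref{thm_can_nbd} (canonical neighborhoods). Suppose, towards a contradiction, that for every choice of $\mathcal{H}^j\to\infty$ the algorithm fails at some first time $t_j$: at $X_j=(x_j,t_j)$ we have $H(X_j)=H^j_{\textrm{trigger}}\to\infty$, but the region $\{H\geq H^j_{\textrm{neck}}\}$ does not admit a minimal separating collection of strong $\delta$-necks and half $\delta$-necks of curvature $H^j_{\textrm{neck}}$. One passes to the rescaled flows $\mathcal{D}_{H(X_j)}(\K^j-X_j)$; by Theorem \ref{thm_can_nbd} a subsequence converges to one of the listed canonical models (evolution of a standard cap preceded by a round shrinking cylinder, round shrinking cylinder/sphere, translating bowl, ancient oval, or their half-space analogues). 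In every one of these models, the region of curvature comparable to $H^j_{\textrm{trigger}}$ is either a cap-like region or a genuine (half) cylinder, and in each case the intermediate curvature level $\{H\approx H^j_{\textrm{neck}}\}$ contains a strong $\delta$-neck or strong half $\delta$-neck (for $j$ large, since $H^j_{\textrm{trigger}}/H^j_{\textrm{neck}}\to\infty$ and $H^j_{\textrm{neck}}/H^j_{\textrm{thick}}\to\infty$); a finite collection of such necks, extracted one component at a time, separates the high-curvature set from the thick part. This contradicts the assumed failure, so for $j$ large the algorithm succeeds, and one iterates: since $K_0$ is compact, at most finitely many surgeries occur on any finite time interval (each surgery reduces area by a definite amount, by Proposition \ref{basic_prop}(b) and the $r_\sharp$-bound), so the flow extends to a free boundary $(\delta,\mathcal{H})$-flow on $[0,T]$ for every $T$, hence on $[0,\infty)$.

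For the long-time behaviour I would borrow the argument of Brendle--Huisken \cite{BrendleHuisken_ambient}. Either the flow becomes extinct in finite time, or it exists for all $t\geq 0$. In the latter case, consider the space-time track $\K$; by Proposition \ref{prop_levelset} it lies within Hausdorff distance $CH^{-1}_{\textrm{neck}}$ of the free boundary level set flow $\L$ of $K_0$, uniformly for all time. Combining this with the long-time convergence results for mean-convex free boundary Brakke flows from \cite{EHIZ} — which say that the level set flow converges, after subtracting a finite set of singular times, smoothly in the one- or two-sheeted sense to a finite collection of stable connected minimal surfaces with empty or free boundary — one obtains that for $t$ large $\K_t$ is a smooth free boundary mean curvature flow converging to such a limit. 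In particular $H\to 0$ on the flow as $t\to\infty$, so eventually $H<H_{\textrm{trigger}}$ everywhere and no surgeries occur for $t$ large; here one also uses the rigidity-type input that a surgery neck carries definite curvature $\sim H_{\textrm{neck}}$, incompatible with $H\to 0$.

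The main obstacle is the contradiction step: verifying that \emph{every} canonical model supplied by Theorem \ref{thm_can_nbd} genuinely contains, at the $H_{\textrm{neck}}$-curvature level, a separating collection of strong $\delta$-necks (and, near $\partial D$, strong half $\delta$-necks) with radii in a single interval $[\tfrac12 r_\sharp, 2 r_\sharp]$. The cylinder, sphere, bowl, and oval cases are as in \cite{HK}; the new subtlety is the half-space models and the transition regions between an interior neck and a boundary half neck along $\partial D$, where one must check — using the convexity of $D$ and item \eqref{def_surgery_delta_hat} of Definition \ref{def_replacestd} — that the surgery modifications stay inside $D$ with free boundary and that the neck radii can be taken uniform. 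This is where the bulk of the careful (but routine-in-spirit) work lies; everything else is bookkeeping on top of Theorems \ref{prop_levelset}, \ref{thm_can_nbd} and the cited results of \cite{EHIZ,BrendleHuisken_ambient}.
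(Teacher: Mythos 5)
Your overall strategy matches the paper's: prove existence on finite time intervals $[0,T]$ via a continuity argument using the canonical neighborhood theorem, then use the distance estimate to the level set flow together with the long-time convergence results of \cite{EHIZ} and the Brendle--Huisken argument to upgrade to $[0,\infty)$ and to rule out late-time surgeries. The long-time part of your proposal is essentially the same as the paper's.

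However, there is a genuine gap in your contradiction step. You blow up at the trigger point $X_j$ with $H(X_j)=H^j_{\textrm{trigger}}$ and rescale by $H(X_j)$, then assert that in the resulting canonical model "the intermediate curvature level $\{H\approx H^j_{\textrm{neck}}\}$ contains a strong $\delta$-neck." After rescaling by $H^j_{\textrm{trigger}}$, the $H^j_{\textrm{neck}}$-level sits at rescaled curvature $H^j_{\textrm{neck}}/H^j_{\textrm{trigger}}\to 0$, i.e.\ it escapes to infinity in the blowup, so the single blowup at $X_j$ gives no control there; the convergence is only locally uniform in space-time. The paper proceeds differently: it first constructs (following the covering argument of \cite[Claim 4.6]{HK}) a union of balls centered at the level set $\mathcal{J}_j=\{H=H^j_{\textrm{neck}}\}$ that separates $\{H=H^j_{\textrm{trigger}}\}$ from $\{H\leq H^j_{\textrm{thick}}\}$, extracts a minimal separating subcollection $\hat{\mathcal{J}}_j\subseteq\mathcal{J}_j$, and then applies Theorem \ref{thm_can_nbd} at the points $p\in\hat{\mathcal{J}}_j$, rescaling by $H(p)=H^j_{\textrm{neck}}$. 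In that blowup $p$ sits at scale one, while $H^j_{\textrm{trigger}}/H^j_{\textrm{neck}}\to\infty$ and $H^j_{\textrm{thick}}/H^j_{\textrm{neck}}\to 0$ force $p$ away from the cap/tip and the low-curvature region in every canonical model, so $p$ must be a strong $\delta$-neck or strong half $\delta$-neck point (as in \cite[Claim 4.7]{HK}); disjointness then follows from minimality of $\hat{\mathcal{J}}_j$. You should replace your trigger-point blowup by this neck-level blowup and add the minimal covering step.

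Two smaller points. First, you claim "at most finitely many surgeries on any finite interval" from area decay; this is already built into Definition \ref{def_alphadelta} (a $\delta$-flow is by definition a \emph{finite} collection of smooth flows), and the point of the continuity argument is precisely to show that the algorithm does not get stuck, not to bound the surgery count. Second, in the long-time part, the constant $C$ in Proposition \ref{prop_levelset} depends on $T$, so the Hausdorff closeness to the level set flow is not literally "uniformly for all time"; the Brendle--Huisken argument must be invoked at a fixed large $T$ at which \cite[Theorem 1.5]{EHIZ} already gives smooth low-curvature convergence of the level set flow, and then the distance estimate on $[0,T]$ plus comparison transfers this to the surgery flow.
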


\begin{proof}
Given any $T<\infty$, we will first prove the existence of a free boundary $(\delta,\mathcal{H})$-flow $\{K_t\}$ starting at $K_0$ and defined on the finite interval $[0,T]$ via a continuity argument similarly as in \cite[Section 4.2]{HK}. 
To this end, fixing $\bar{\delta}>0$ small enough, suppose towards a contradiction that there is a sequence $\K^j$ of free boundary $(\delta,\mathcal{H}_j)$-flows with $\delta\leq \bar{\delta}$ and $\mathcal{H}^j\to\infty$, that can only be defined on a maximal time interval $[0,T_j]$ for some
$T_j<T$.
Then, it must be the case that we cannot find a minimal collection
of strong $\delta$-necks and strong half $\delta$-necks in $K^j_{T_j}$
as required in Definition \ref{def_MCF_surgery} (free boundary flow with surgery), since otherwise we could perform surgeries along an `innermost' such collection of centers $p$, i.e. one for which
$
\sum_p \textrm{dist}(p,\{H=H_{\textrm{trig}}^j\})
$
is minimal,  and run smooth free boundary mean curvature flow for a short time, contradicting the maximality of $T_j$.
So our goal is to  
produce a minimal separating collection of strong $\delta$-necks and strong half $\delta$-necks for large $j$.

Let $\mathcal{I}_j$ be the set of points $p\in \partial K^j_{T_j}$ with $H(p)>H^j_{\textrm{neck}}$, and let $\mathcal{J}_j$ be the set of points $p\in \partial  K^j_{T_j}$ with $H(p)=H^j_{\textrm{neck}}$. Then, similarly as in \cite[Claim 4.6]{HK}
there is a large constant $C<\infty$, such that the union $
V_j=\bigcup_{p\in \mathcal{J}_j} B(p,CH^{-1}(p))$, for $j$ large enough, separates $\{H=H^j_{\textrm{trig}}\}$ from $\{H\leq H^j_{\textrm{th}}\}$ in the domain $K^j_{T_j}$.
Let $\hat{\mathcal{J}}_j\subseteq\mathcal{J}_j$ be a minimal subset
such that the union of balls $\cup_{p\in \hat{\mathcal{J}}_j}B(p,CH^{-1}(p))$ has the separation property.
Then, using Theorem \ref{thm_can_nbd} (canonical neighborhoods) and arguing similarly as in the proof of \cite[Claim 4.7]{HK} we see that
 for large $j$, every $p\in \hat{\mathcal{J}}_j$ is a strong $\delta$-neck point or strong half $\delta$-neck point.  This collection is disjoint for large $j$, since otherwise two intersecting $\delta$-necks or half $\delta$-necks would lie in a single single one with quality $\hat{\delta}\ll\delta$, which is impossible by minimality of $\mathcal{J}_j$.
Thus, we have a minimal collection of disjoint strong $\delta$-necks and strong half $\delta$-necks with the separation property; this gives the desired contradiction and thus proves the existence on the interval $[0,T]$.

Finally, it is known by \cite[Theorem 1.5]{EHIZ} that the free boundary level set flow of $K_0$ either becomes extinct in finite time or or for $t\to\infty$ converges smoothly in the one or two-sheeted sense to a finite collection of stable minimal surfaces or stable free boundary minimal surfaces. Hence, applying the above result for $T<\infty$ sufficiently large and taking also into account Proposition \ref{prop_levelset} (distance to free boundary level set flow), by mimicking the argument from Brendle-Huisken \cite{BrendleHuisken_ambient}, we can get long-time existence and convergence. 
\end{proof}

\bigskip

\bibliography{fb_surgery}

\bibliographystyle{alpha}

\vspace{10mm}

{\sc Robert Haslhofer, Department of Mathematics, University of Toronto,  40 St George Street, Toronto, ON M5S 2E4, Canada}\\

\emph{E-mail:} roberth@math.toronto.edu

\end{document}